\documentclass[amstex,12pt, amssymb]{article}

\usepackage{mathtext}
\usepackage[cp1251]{inputenc}
\usepackage[T2A]{fontenc}
\usepackage[dvips]{graphicx}
\usepackage{amsmath}
\usepackage{amssymb}
\usepackage{amsxtra}
\usepackage{latexsym}
\usepackage{ifthen}

\newcounter{lemma}[section]

\newcounter{corollary}[section]

\newcounter{remark}[section]

\newcounter{theorem}[section]

\newcounter{proposition}[section]

\newcounter{example}

\numberwithin{equation}{section}

\begin{document}

\markboth{N.~ILKEVYCH, D.~ROMASH, E.~SEVOST'YANOV}{\centerline{ON
THE GLOBAL BEHAVIOR OF MAPPINGS...}}

\def\cc{\setcounter{equation}{0}
\setcounter{figure}{0}\setcounter{table}{0}}

\overfullrule=0pt


\author{N.~ILKEVYCH, D.~ROMASH, E.~SEVOST'YANOV\footnote{Corresponding author}}

\title{\bf ON THE GLOBAL BEHAVIOR OF MAPPINGS AND THE CORRESPONDENCE
OF BOUNDARIES}

\date{\today}
\maketitle

\begin{abstract}
We consider families of mappings with moduli inequalities, having
different definition domains. Under some additional assumptions we
have proved that such families are uniformly equicontinuous. We have
considered four main cases: when mappings are homeomorphisms and
corresponding domains have simple geometry; when similar mappings
have branch points; when domains with complex geometry are
considered, but mappings still are homeomorphisms; and when similar
mappings have branch points. Sequences of domains are generally
assumed to converge to a kernel, and the characteristics of the
mappings must satisfy certain conditions on their growth. In some of
the four cases mentioned above, we also described properties of the
limit mapping. We also obtained the correspondence of the boundary
points of the kernel to the boundary points, and the inner points to
the inner points.
\end{abstract}

\bigskip
{\bf 2010 Mathematics Subject Classification: 30C65}

\section{Introduction}

This paper is devoted to the study of mappings satisfying upper
distortion estimates of the modulus of families of paths. We
consider the case when definition domains of mappings may be
different. The main subject of this study is distortion estimates
for the distance under mappings. For fixed domains, this problem has
been studied by many authors, including both homeomorphisms and
mappings with a branching. In particular, we could mentioned papers
on this matter, see, e.g., \cite{Cr$_1$}--\cite{Cr$_2$},
\cite{MRSY}, \cite{M}, \cite{RS$_1$} and \cite{SevSkv}. Considering
families of mappings of variable domains, we use some ideas
from~\cite{Suv}, where the theory of prime ends of variable domains
was studied. However, our research does not involve such a complex
theory, although, at the same time, we consider domains not only
``with prime ends'', but also with fairly smooth boundaries. We
should note that the manuscript is in line with the research and
development of the theory of mappings, modulus technique, and at the
same time fills the corresponding niche in the theory of mappings
with finite distortion, which have been actively studied recently
(see, e.g., \cite{MRSY}).

\medskip
We recall some definitions. A Borel function $\rho:{\Bbb
R}^n\,\rightarrow [0,\infty] $ is called {\it admissible} for the
family $\Gamma$ of paths $\gamma$ in ${\Bbb R}^n,$ if the relation
\begin{equation*}\label{eq1.4}
\int\limits_{\gamma}\rho (x)\, |dx|\geqslant 1
\end{equation*}
holds for all (locally rectifiable) paths $ \gamma \in \Gamma.$ In
this case, we write: $\rho \in {\rm adm} \,\Gamma .$ Let $p\geqslant
1,$ then {\it $p$-modulus} of $\Gamma $ is defined by the equality
\begin{equation*}\label{eq1.3gl0}
M_p(\Gamma)=\inf\limits_{\rho \in \,{\rm adm}\,\Gamma}
\int\limits_{{\Bbb R}^n} \rho^p (x)\,dm(x)\,.
\end{equation*}
We set $M(\Gamma):=M_n(\Gamma).$ Let $x_0\in {\Bbb R}^n,$
$0<r_1<r_2<\infty,$
\begin{equation}\label{eq1ED}
S(x_0,r) = \{ x\,\in\,{\Bbb R}^n : |x-x_0|=r\}\,, \quad B(x_0, r)=\{
x\,\in\,{\Bbb R}^n : |x-x_0|<r\}\end{equation}
and
\begin{equation}\label{eq1**}
A=A(x_0, r_1,r_2)=\left\{x\,\in\,{\Bbb R}^n:
r_1<|x-x_0|<r_2\right\}\,.\end{equation}
Given sets $E,$ $F\subset\overline{{\Bbb R}^n}$ and a domain
$D\subset {\Bbb R}^n$ we denote by $\Gamma(E,F,D)$ a family of all
paths $\gamma:[a,b]\rightarrow \overline{{\Bbb R}^n}$ such that
$\gamma(a)\in E,\gamma(b)\in\,F $ and $\gamma(t)\in D$ for $t \in
(a, b).$ Let $S_i=S(x_0, r_i),$ $i=1,2,$ where spheres $S(x_0, r_i)$
centered at $x_0$ of the radius $r_i$ are defined in~(\ref{eq1ED}).
Given $p\geqslant 1,$ a mapping $f:D\rightarrow \overline{{\Bbb
R}^n}$ is called a {\it ring $Q$-mapping at a point $x_0\in {\Bbb
R}^n$ with respect to $p$-modulus}, if the condition
\begin{equation} \label{eq2*!}
M_p(f(\Gamma(S_1, S_2, D)))\leqslant \int\limits_{A\cap D} Q(x)\cdot
\eta^p (|x-x_0|)\, dm(x)
\end{equation}
holds for all $0<r_1<r_2<\infty$ and any Lebesgue measurable
function $\eta:(r_1, r_2)\rightarrow [0, \infty]$ such that
\begin{equation}\label{eq8B_2}
\int\limits_{r_1}^{r_2}\eta(r)\,dr\geqslant 1\,.
\end{equation}
The inequalities of the form~(\ref{eq2*!}) were established for many
well-known classes of mappings, see e.g.
\cite[Definition~13.1]{Va}), \cite[Theorems~8.1, 8.5]{MRSY} and
\cite[Corollary~2]{Sal}.

\medskip
Recall that a mapping $f:D\rightarrow {\Bbb R}^n$ is called {\it
discrete} if the pre-image $\{f^{-1}\left(y\right)\}$ of each point
$y\,\in\,{\Bbb R}^n$ consists of isolated points, and {\it is open}
if the image of any open set $U\subset D$ is an open set in ${\Bbb
R}^n.$ A mapping $f:D\rightarrow {\Bbb R}^n$ is called {\it closed}
if $f(A)$ is closed in $f(D)$ whenever $A$ is closed in $D.$ Later,
in the extended space $\overline{{{\Bbb R}}^n}={{\Bbb
R}}^n\cup\{\infty\}$ we use the {\it spherical (chordal) metric}
$h(x,y)=|\pi(x)-\pi(y)|,$ where $\pi$ is a stereographic projection
$\overline{{{\Bbb R}}^n}$ onto the sphere
$S^n(\frac{1}{2}e_{n+1},\frac{1}{2})$ in ${{\Bbb R}}^{n+1},$ namely,
\begin{equation*}\label{eq3C}
h(x,\infty)=\frac{1}{\sqrt{1+{|x|}^2}}\,,\quad
h(x,y)=\frac{|x-y|}{\sqrt{1+{|x|}^2} \sqrt{1+{|y|}^2}}\,, \quad x\ne
\infty\ne y
\end{equation*}
(see \cite[Definition~12.1]{Va}). In what follows, ${\rm Int\,}A$
denotes the set of inner points of the set $A\subset \overline{{\Bbb
R}^n}.$ Recall that the set $U\subset\overline{{\Bbb R}^n}$ is
neighborhood of the point $z_0,$ if $z_0\in {\rm Int\,}A.$

\medskip
Given $E_1, E_2\subset\overline{{\Bbb R}^n}$ we set
$$h(E_1, E_2)=\inf\limits_{x\in E_1, y\in E_2}h(x, y)\,.$$
\medskip
Let $I$ be a fixed set of indices and let $D_i,$ $i\in I,$ be some
number of domains. Following~\cite[Sect.~2.4]{NP}, we say that a
family of domains $\{D_i\}_{i\in I}$ is {\it equi-uniform with
respect to $p$-modulus} if for any $r> 0$ there exists a number
$\delta> 0$ such that the inequality $M_p(\Gamma(F^{\,*},F,
D_i))\geqslant \delta$ holds for any $i\in I$ and any continua $F,
F^*\subset D_i$ such that $h(F)\geqslant r$ and $h(F^{\,*})\geqslant
r.$

\medskip
\begin{remark}\label{rem1}
Due to~\cite[Section~3]{MRSY} we say that a boundary $D$ is called
{\it strongly accessible with respect to $p$-modulus at $x_0\in
\partial D,$} if for any
neighborhood $U$ of the point $x_0\in\partial D$ there is a
neighborhood $V\subset U$ of this point, a compactum $F\subset D$
and a number $\delta>0$ such that $M_p(\Gamma(E, F, D))\geqslant
\delta$ for any continua $E\subset D$ such that $E\cap
\partial U\ne\varnothing\ne E\cap
\partial V.$ The boundary of a domain $D$ is called {\it strongly accessible with respect to
$p$-modulus,} if this is true for any $x_0\in
\partial D.$ When $p=n,$ prefix ``relative to $p$-modulus'' is omitted.
Uniform domains have strongly accessible boundaries, see e.g.
\cite[Remark~1]{SevSkv}. The inverse statement is also true at least
for $p=n,$ see \cite[Lemma~4.2]{Sev$_2$}.
\end{remark}

\medskip
\begin{remark}
Observe that, if $n=2,$ then every domain $D$ in ${\Bbb R}^2$ which
has a finite number of boundary components is uniform if and only if
$D$ is finitely connected on the boundary, see Corollary~6.8 in
\cite{Na$_2$}.
\end{remark}

Set
$$q_{x_0}(r):=\frac{1}{\omega_{n-1}r^{n-1}}\int\limits_{|x-x_0|=r}Q(x)\,dS\,,$$
where $dS$ is the element of the surface area of $S,$ and
$$q^{\,\prime}_{x_0}(r):=\frac{1}{\omega_{n-1}r^{n-1}}
\int\limits_{|x-x_0|=r}Q^{\,\prime}(x)\,dS\,,$$
$Q^{\,\prime}(x)=\max\{Q(x), 1\}.$ A domain $D\subset {\Bbb R}^n$ is
called {\it locally connected at the point} $x_0\in
\partial D,$ if for any neighborhood $U$ of point $x_0$ there
is a neighborhood $V\subset U$ of the same point such that $V\cap D$
is connected. A domain $D$ is called {\it locally connected on
$\partial D,$} if this domain is such at each point of its boundary.

\medskip Let $D_m,$ $m=1,2,\ldots ,$ be a sequence of domains in
${\Bbb R}^n,$ containing a fixed point $A_0.$ If there exists a ball
$B(A_0, \rho),$ $\rho>0,$ belonging to all $D_m,$ then the {\it
kernel} of the sequence $D_m,$ $m=1,2,\ldots ,$ with respect to
$A_0$ is the largest domain $D_0$ containing $x_0$ and such that for
each compact set $E$ belonging to $D_0$ there is $N>0$ such that $E$
belongs to $D_m$ for all $m\geqslant N.$ A largest domain is one
which contains any other domain having the same property. A sequence
of domains $D_m,$ $m=1,2,\ldots ,$ converges to a kernel $D_0$ if
any subsequence of $D_m$ has $D_0$ as its kernel.

\medskip
Let $D_m,$ $m=1,2,\ldots ,$ be a sequence of domains which converges
to a kernel $D_0.$ Then $D_m$ will be called {\it regular} with
respect to $D_0,$ if $D_m\subset D_0$ for all $m\in {\Bbb N}$ and,
for every $x_0\in \overline{D_0}$ and for every neighborhood $U$ of
$x_0$ there is a neighborhood $V$ of $x_0,$ $V\subset U,$ and
$M=M(U, x_0)\in {\Bbb N}$ such that $V\cap D_m$ is a non-empty
connected set for every $m\geqslant M(U, x_0).$

\medskip
Given $p\geqslant 1,$ a number $\delta>0,$ a fixed domain
$D_0\subset {\Bbb R}^n,$ $n\geqslant 2,$ a sequence of domains
$\frak{D}=\{D_m\}_{m=1}^{\infty}$ the kernel of which is $D_0,$ a
non-degenerate continuum $A\subset D_0,$
$A\subset\bigcap\limits_{m=1}^{\infty}D_m,$ and a Lebesgue
measurable function $Q:{\Bbb R}^n\rightarrow[0, \infty]$, we denote
by $\frak{F}_{Q, A, p, \delta}(D_0, \frak{D})$ some a family of ring
$Q$-homeomorphisms $f_m:D_m\rightarrow \overline{{\Bbb R}^n}$
satisfying~(\ref{eq2*!})--(\ref{eq8B_2}) at any point
$x_0\in\overline{D_0}$ for all $0<r_1<r_2<\infty$ such that
$h(f_m(A))\geqslant\delta$ and $h(\overline{{\Bbb R}^n}\setminus
f_m(D_m))\geqslant \delta.$ The concept of finite mean oscillation
(FMO) used below can be found, for example, in \cite{MRSY} or
\cite{Sev$_1$}. The following statement is true.

\medskip
\begin{theorem}\label{th2}
{\it\, Let $p\in (n-1, n]$ and let $f_m\in \frak{F}_{Q, A, p,
\delta}(D_0, \frak{D}),$ $m=1,2,\ldots ,$ be a sequence such that:

\medskip
1) the sequence of domains $D_m$ is regular with respect to~$D_0;$

\medskip
2) for every $m\in {\Bbb N},$ a domain $D_m$ is locally connected on
its boundary;

\medskip
3) the family $f_m(D_m)$ is equi-uniform with respect to $p$-modulus
over all $m\in {\Bbb N};$

\medskip
4) at least one of two following conditions hold: $Q$ has a finite
mean oscillation in $\overline{D_0},$ or
\begin{equation}\label{eq2_4}
\int\limits_{0}^{\beta(x_0)}\frac{dt}{t^{\frac{n-1}{p-1}}
q_{x_0}^{\,\prime\,\frac{1}{p-1}}(t)}=\infty
\end{equation}
for every $x_0\in \overline{D_0}$ and some $\beta(x_0)>0.$

\medskip
\textbf{I.} Then the family $f_m,$ $m=1,2,\ldots,$ is uniformly
equicontinuous in $\frak{D},$ i.e., for any $\varepsilon>0$ there is
$\delta=\delta(\varepsilon)>0$ such that $h(f_m(x),
f_m(y))<\varepsilon$ whenever $x,y\in D_m,$ $|x-y|<\delta$ and $m\in
{\Bbb N}.$

\medskip
\textbf{II.} Moreover, there is a subsequence $f_{m_k},$
$k=1,2,\dots ,$ which converges to $f$ locally uniformly in $D_0.$
In this case, $f$ has a continuous boundary extension
$f:\overline{D_0}\rightarrow \overline{{\Bbb R}^n}$ and $f$ is a
homeomorphism in $D_0.$ If $A_0$ is some compactum in $D_0,$ then
there is $\delta_2>0$ and $M_0\in {\Bbb N}$ such that $A_0\subset
D_m$ for all $m\geqslant M_0$ and $h(f_m(A_0),
\partial f_m(D_m))\geqslant \delta_2$ for every $m\geqslant M_0.$

\medskip
\textbf{III.} If $x_m\in D_{m},$ $m=1,2,\ldots,$ $f_m$ converges to
$f$ locally uniformly in $D_0$ and $x_m\rightarrow x_0$ as
$m\rightarrow \infty,$ then $f_{m}(x_m)\rightarrow f(x_0).$ If
$x_0\in
\partial D_0,$ then $f(x_0)\in \partial f(D_0).$}
\end{theorem}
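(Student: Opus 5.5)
We argue by contradiction throughout and use the standard modulus technique. The key analytic input is a lemma of the following kind. Under condition~4), for every $x_0\in\overline{D_0}$ and every $\varepsilon_0>0$ there is a function $\eta=\eta_{r_1,\varepsilon_0}$ on $(r_1,\varepsilon_0)$ satisfying~(\ref{eq8B_2}). This function is to be chosen so that
$$\int_{A(x_0,r_1,\varepsilon_0)}Q(x)\eta^p(|x-x_0|)\,dm(x)\to0 \quad\text{as } r_1\to0.$$
In the case of~(\ref{eq2_4}) we take the standard choice $\eta(t)=\bigl(t^{\frac{n-1}{p-1}}q^{\prime\,\frac1{p-1}}_{x_0}(t)\bigr)^{-1}/I(r_1,\varepsilon_0)$, where $I$ denotes the divergent integral. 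In the FMO case we use the known estimate with $\eta(t)=1/(t\log\frac1t)$ suitably normalized; this is where $p\le n$ enters. Via~(\ref{eq2*!}), the modulus $M_p(f_m(\Gamma(S(x_0,r_1),S(x_0,\varepsilon_0),D_m)))$ then becomes small, uniformly in $m$, because $Q$ and $\eta$ do not depend on $m$.

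\textbf{I.} Suppose equicontinuity fails. Then there are $\varepsilon>0$, points $x_m,y_m\in D_m$ with $|x_m-y_m|\to0$, and (after passing to a subsequence) indices $m$ for which $h(f_m(x_m),f_m(y_m))\geqslant\varepsilon$. We may assume $x_m,y_m\to x_0\in\overline{D_0}$; here we use $D_m\subset D_0$ from regularity, and if $D_0$ is unbounded we work in the chordal metric, treating $x_0=\infty$ by inversion. By regularity~1), for a small ball $U=B(x_0,r)$ there is $V\subset U$ with $V\cap D_m$ connected for $m\ge M$. Join $x_m$ and $y_m$ by a path $\gamma_m\subset V\cap D_m$ inside $B(x_0,r)$. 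Then $C_m=f_m(|\gamma_m|)$ is a continuum with $h(C_m)\ge\varepsilon$.

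Next, $f_m(A)$ is a continuum with $h(f_m(A))\geqslant\delta$, and $A$ lies at positive distance from $x_0$ once $r$ is small. (If $x_0\in A$, replace $A$ by a subcontinuum far from $x_0$; alternatively, the case $x_0\in D_0$ follows from interior equicontinuity.) Equi-uniformity~3) then gives $M_p(\Gamma(C_m,f_m(A),f_m(D_m)))\ge\delta_1(\min\{\varepsilon,\delta\})>0$. On the other hand, $\Gamma(C_m,f_m(A),f_m(D_m))=f_m(\Gamma(|\gamma_m|,A,D_m))$ since $f_m$ is a homeomorphism. Every path in $\Gamma(|\gamma_m|,A,D_m)$ meets $S(x_0,r)$ and $S(x_0,\varepsilon_0)$, so the family is minorized by $\Gamma(S(x_0,r),S(x_0,\varepsilon_0),D_m)$. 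Hence, by the lemma, its image modulus tends to $0$ as $r\to0$. This contradicts the lower bound $\delta_1$.

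\textbf{II.} Since the family is equicontinuous in the chordal metric, Arzel\`a--Ascoli together with a diagonal argument over an exhaustion of $D_0$ (available by the kernel definition) yields $f_{m_k}\to f$ locally uniformly. Uniform equicontinuity on $D_m$ passes to the limit, so $f$ is uniformly continuous on $D_0$ and extends continuously to $\overline{D_0}$. To see that $f$ is a homeomorphism, we use the standard result that a locally uniform limit of ring $Q$-homeomorphisms with such $Q$ is either a homeomorphism or a constant. Constancy is excluded because $h(f_m(A))\ge\delta$ passes to the limit.

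For the claim about $A_0$, suppose $h(f_m(A_0),\partial f_m(D_m))\to0$ along a subsequence. Then there are points $a_m\in A_0$ and $b_m\in\partial f_m(D_m)$ that get arbitrarily close. Since $h(\overline{{\Bbb R}^n}\setminus f_m(D_m))\ge\delta$, we can take continua $K_m\subset f_m(D_m)$ joining a point near $f_m(a_m)$ to the boundary, with $h(K_m)$ bounded below. Joining them to $f_m(A_0)$ produces path families whose moduli are bounded below by equi-uniformity. Their preimages, however, connect the compactum $A_0$ with sets going to $\partial D_m$. Here we use the boundary estimate of Step~I together with local connectedness~2), which lets us realize preimages of boundary-approaching paths, to show that the preimage moduli tend to $0$. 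This is the step I expect to be the main obstacle: controlling preimages near the varying boundaries $\partial D_m$ uniformly in $m$, as opposed to controlling images.

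\textbf{III.} The first statement follows from I and II:
$$h(f_m(x_m),f(x_0))\le h(f_m(x_m),f_m(x_m'))+h(f_m(x_m'),f(x_m'))+h(f(x_m'),f(x_0)),$$
with $x_m'\in D_0$ chosen close to $x_m$ inside $D_m$. Here the first term is small by uniform equicontinuity, the second by local uniform convergence, and the third by continuity of $f$ on $\overline{D_0}$.

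For the boundary statement, let $x_0\in\partial D_0$ and suppose $f(x_0)\in f(D_0)$, say $f(x_0)=f(z)$ with $z\in D_0$. Take a compact neighbourhood $A_0$ of $z$. By II, $f_m(A_0)$ contains a ball $B(f(z),\delta_2/2)$ in the chordal metric for large $m$, using uniform convergence and the distance-to-boundary bound. Meanwhile $f_m(x_m)\to f(x_0)=f(z)$ for points $x_m\in D_m$ tending to $x_0$ and lying outside $A_0$, and injectivity of $f_m$ forbids $f_m(x_m)\in f_m(A_0)$. This is a contradiction, so $f(x_0)\in\partial f(D_0)$.
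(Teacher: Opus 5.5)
Your Step~I and the first part of Step~II (Arzel\`a--Ascoli, passing uniform equicontinuity to the limit, and the homeomorphism-or-constant alternative) are essentially the paper's argument.

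\textbf{Main gap: the bound in Step~II.} The gap is the estimate $h(f_m(A_0),\partial f_m(D_m))\geqslant\delta_2$. What you write there is not a proof, and the modulus route you sketch does not close. The preimage continuum $f_m^{-1}(K_m)$ runs from (near) $A_0$ to (near) $\partial D_m$, and $d(A_0,\partial D_m)$ stays bounded below for large $m$. So this continuum does not shrink to any point. The ring inequality only gives small image modulus for families crossing $A(x_0,r,\varepsilon_0)$ with $r\to0$, and Step~I controls $f_m$, not $f_m^{-1}$. In the branched case the paper does use a modulus argument (Lemma~\ref{lem4}), but only under the extra hypothesis~5) on components of $\partial f_m(D_m)$, which is absent here.

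The missing idea is elementary and uses that the limit $f$ is a homeomorphism:
\begin{itemize}
\item Choose a domain $G$ with $A_0\subset G$ and $\overline G\subset D_0$; then $\overline G\subset D_m$ for large $m$.
\item Then $d:=h(f(A_0),f(\partial G))>0$, and uniform convergence on $\overline G$ gives $h(f_m(A_0),f_m(\partial G))\geqslant d/2$ for large $m$.
\item Since $f_m(\partial G)=\partial f_m(G)$ and $f_m(G)\subset f_m(D_m)$, any $w\in\partial f_m(D_m)$ lies outside $f_m(G)$.
\item A path from $z\in f_m(A_0)$ to $w$ inside $\overline{B_h(z,h(z,w))}$ therefore meets $\partial f_m(G)$ by Proposition~\ref{pr2}. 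Hence $h(f_m(A_0),\partial f_m(D_m))\geqslant h(f_m(A_0),\partial f_m(G))\geqslant d/2$; this is Proposition~\ref{pr6}.
\end{itemize}
To get this for all large $m$, not just along the convergent subsequence, argue by contradiction and pass to a further convergent subsequence, whose limit is again a homeomorphism.

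\textbf{Step~III.} The boundary part misuses this bound. The estimate $h(f_m(A_0),\partial f_m(D_m))\geqslant\delta_2$ puts a chordal ball about $f_m(z)$ inside $f_m(D_m)$, not inside $f_m(A_0)$, so injectivity of $f_m$ gives no contradiction. A ball $B_h(f_m(z),d/2)\subset f_m(G)$, obtained as above with $A_0=\{z\}$, would work. In fact no uniform estimate is needed, and the paper argues directly. The map $f$ is a homeomorphism of $D_0$ onto the open set $f(D_0)$. If $y_k\in D_0$, $y_k\to x_0\in\partial D_0$, and $f(y_k)\to f(x_0)=f(z)$ with $z\in D_0$, then continuity of $f^{-1}$ forces $y_k\to z\in D_0$, which is absurd.

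\textbf{Two smaller omissions.}
\begin{itemize}
\item In the first part of Step~III, the points $x_m'$ cannot stay in a fixed compactum when $x_0\in\partial D_0$. You need a diagonal choice, as in the paper, to make $h(f_m(x_m'),f(x_m'))\to0$.
\item In Step~I you tacitly assume the offending indices $m$ tend to infinity. If they are bounded, regularity~1) is useless, and you must use condition~2) for a single $D_m$; the paper invokes Lemma~\ref{lem1} here.
\end{itemize}
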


\medskip
We also prove a result similar to Theorem~\ref{th2} for
non-injective mappings.

\medskip
Given $p\geqslant 1,$ a domain $D_0\subset {\Bbb R}^n,$ a set
$E\subset\overline{{\Bbb R}^n}$ and a number $\delta>0$, a fixed
domain $D_0\subset {\Bbb R}^n,$ $n\geqslant 2,$ a sequence of
domains $\frak{D}=\{D_m\}_{m=1}^{\infty}$ the kernel of which is
$D_0$ and a Lebesgue measurable function $Q:{\Bbb R}^n\rightarrow
[0, \infty]$ we denote by $\frak{R}_{Q, \delta, p, E}(D_0,
\frak{D})$ the family of all open discrete closed mappings
$f:D_m\rightarrow \overline{{\Bbb R}^n}\setminus E$
satisfying~(\ref{eq2*!})--(\ref{eq8B_2}) for any $0<r_1<r_2<\infty$
at any point $x_0\in\overline{D_0}$ such that the following
condition holds: for any domain $D^{\,\prime}_m:=f_m(D_m)$ there
exists a continuum $K_m\subset D^{\,\prime}_m$ such that
$h(K_m)\geqslant \delta$ and $h(f_m^{\,-1}(K_m), \partial
D_m)\geqslant \delta>0.$ The following statement holds.

\medskip
\begin{theorem}\label{th4_4}
{\,\it Let $p\in (n-1, n]$ and let $f_m\in \frak{R}_{Q, \delta, p,
E}(D_0, \frak{D}),$ $m=1,2,\ldots ,$ be a sequence such that the
conditions~1)--4) from Theorem~\ref{th2} hold. Let $E$ be a set of a
positive capacity for $p=n,$ and $E$ is arbitrary closed set for
$n-1<p<n.$

\medskip
\textbf{I.} Then the family $f_m,$ $m=1,2,\ldots,$ is uniformly
equicontinuous in $\frak{D},$ i.e., for any $\varepsilon>0$ there is
$\delta=\delta(\varepsilon)>0$ such that $h(f_m(x),
f_m(y))<\varepsilon$ whenever $x,y\in D_m,$ $|x-y|<\delta$ and $m\in
{\Bbb N}.$ Moreover, there is a subsequence $f_{m_k},$ $k=1,2,\dots
,$ which converges to $f$ locally uniformly in $D_0.$ In this case,
$f$ has a continuous boundary extension $f:\overline{D_0}\rightarrow
\overline{{\Bbb R}^n}.$ If $x_m\in D_{m},$ $m=1,2,\ldots,$ $f_m$
converges to $f$ locally uniformly in $D_0$ and $x_m\rightarrow x_0$
as $m\rightarrow \infty,$ then $f_{m}(x_m)\rightarrow f(x_0).$

\medskip
\textbf{II.} In addition, assume that

\medskip
5) $p=n$ and there is $r>0,$ does not depending on $m\in {\Bbb N},$
such that $h(E)\geqslant r$ whenever $E$ is any component of
$\partial f_m(D_m).$  If $A_0$ is some compactum in $D_0,$ then
there is $\delta_2>0$ and $M_0\in {\Bbb N}$ such that $A_0\subset
D_m$ and $h(f_m(A_0),
\partial f_m(D_m))\geqslant \delta_2$ for every $m\geqslant M_0.$
The mapping $f$ is boundary preserving: if $x_0\in \partial D_0,$
then $f(x_0)\in \partial f(D_0).$ If $\overline{B(x_0,
\varepsilon_0)}\subset D_0\cap\bigcap\limits_{m=1}^{\infty}D_m,$
then there is $\varepsilon_1>0$ does not depending on $m\in {\Bbb
N}$ such that
\begin{equation}\label{eq3A} B_h(f_m(x_0), \varepsilon_1)\subset f_m(D_m)\,,
\quad m=1,2,\ldots\,,
\end{equation}
where $B_h(y_0, r_0)=\{y\in\overline{{\Bbb R}^n}: h(y, y_0)<r_0\}.$
}
\end{theorem}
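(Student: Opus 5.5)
We argue by contradiction throughout and use the modulus inequality (\ref{eq2*!}) at points of $\overline{D_0}$, together with condition 4). From 4) we first extract the standard consequence: for every $x_0\in\overline{D_0}$ there is a measurable $\eta$ with $\int_{\varepsilon}^{\varepsilon_0}\eta\,dr=1$ and $\int_{A(x_0,\varepsilon,\varepsilon_0)}Q\eta^p\,dm\to 0$ as $\varepsilon\to0$. In the FMO case we take $\eta\sim 1/(t\log\frac1t)$, as in \cite{MRSY}. In case (\ref{eq2_4}) we take $\eta$ proportional to $t^{-\frac{n-1}{p-1}}q_{x_0}^{\prime\,-\frac{1}{p-1}}(t)$ and compute in polar coordinates. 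The right-hand side of (\ref{eq2*!}) for $\Gamma(S(x_0,\varepsilon),S(x_0,\varepsilon_0),D_m)$ is then $o(1)$, uniformly in $m$, since $Q$ does not depend on $m$.

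\textbf{Step I (equicontinuity).} Suppose there are $\varepsilon_0>0$, $m_k$ and $x_k,y_k\in D_{m_k}$ with $|x_k-y_k|\to0$ but $h(f_{m_k}(x_k),f_{m_k}(y_k))\geqslant\varepsilon_0$. Passing to a subsequence, $x_k,y_k\to x_0\in\overline{D_0}$. Here we use that $D_m\subset D_0$ by regularity, so $x_0\in\overline{D_0}$.

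By regularity 1), for a small ball $U=B(x_0,r)$ there are $V\subset U$ and $M$ such that $V\cap D_{m_k}$ is connected for large $k$. Joining $x_k,y_k$ inside $V\cap D_{m_k}$ gives a path $C_k$, and shrinking $r$ we may take $C_k\subset B(x_0,\varepsilon)$. Its image has $h(f_{m_k}(C_k))\geqslant\varepsilon_0$.

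Let $K_m\subset f_m(D_m)$ be the continuum from the class definition, with $h(K_m)\geqslant\delta$ and $h(f_m^{-1}(K_m),\partial D_m)\geqslant\delta$. Two points need care. First, we must guarantee that $f_m^{-1}(K_m)$ stays outside $B(x_0,\varepsilon_0')$ for a fixed $\varepsilon_0'$. If $x_0\in\partial D_0$, this follows from the distance to $\partial D_m$ being at least $\delta$, once we note that $x_0$ lies near $\partial D_{m_k}$. If $x_0$ is interior, we shrink $K_m$ to a subcontinuum of diameter at least $\delta/2$ avoiding a small image ball; some care is needed with non-injectivity here. Second, the target domains $f_m(D_m)$ are equi-uniform by 3), so
$$M_p\bigl(\Gamma(f_{m_k}(C_k),K_{m_k},f_{m_k}(D_{m_k}))\bigr)\geqslant\delta_*>0.$$

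For open discrete closed maps, a path family in the image lifts. By Rickman/Väisälä maximal lifting, using that the map is closed so lifts do not escape to the boundary, each path starting on $f(C_k)$ has a lift starting on $C_k$ and ending in $f^{-1}(K)$. Hence the family is minorized by $f_{m_k}(\Gamma(S(x_0,\varepsilon),S(x_0,\varepsilon_0'),D_{m_k}))$, whose modulus is $o(1)$. This contradicts $\delta_*>0$.

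\textbf{Local uniform convergence and the extension.} The maps avoid $E$, so the image space is bounded in the chordal metric. Arzelà–Ascoli with a diagonal argument over compacta exhausting $D_0$ then yields a subsequence converging locally uniformly. The uniform modulus of continuity from Step I passes to the limit $f$ on $D_0$. It also lets us define $f(x_0)=\lim f_{m_k}(x_k)$ for $x_0\in\partial D_0$: the limit is independent of the choice of $x_k$, because points of different sequences that are $\delta$-close have images $\varepsilon$-close. This gives the continuous extension and the statement $f_m(x_m)\to f(x_0)$, by a $3\varepsilon$ argument.

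\textbf{Part II.} For the uniform distance $h(f_m(A_0),\partial f_m(D_m))\geqslant\delta_2$, suppose the contrary: there are $a_k\in A_0$ and $b_k\in\partial f_{m_k}(D_{m_k})$ with $h(f(a_k),b_k)\to0$. By 5), the component $F_k$ of $\partial f_{m_k}(D_{m_k})$ containing $b_k$ has $h(F_k)\geqslant r$, and $p=n$. Take a small continuum $G_k$ joining $f_{m_k}(a_k)$ to near $F_k$ inside the image, lift it from $a_k$, and compare via (\ref{eq2*!}) at the limit point $a_0\in A_0$ with the capacity lower bound
$$M(\Gamma(G_k,F_k,\overline{{\Bbb R}^n}))\to\infty,$$
which holds because $G_k$ and $F_k$ come close while both have diameter bounded below. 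Maximal lifts that end on $\partial D_{m_k}$ leave $B(a_0,\varepsilon_0)$, giving modulus $o(1)$ or at least bounded, which is a contradiction.

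The inclusion (\ref{eq3A}) is the special case $A_0=\overline{B(x_0,\varepsilon_0)}$. Boundary preservation follows the same way. If $f(x_0)$ were interior to $f(D_0)$, say $f(x_0)=f(z)$ with $z\in D_0$, then points $x_m\to x_0$ near $\partial D_m$ would have images converging to a point at positive distance $\delta_2$ from $\partial f_m(D_m)$. The lift of a small ball around $f(x_0)$, combined with closedness and discreteness, then contradicts the equicontinuity and distance estimates.

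\textbf{Main obstacle.} I expect the hardest step to be the lifting argument that controls where $f_m^{-1}(K_m)$ and the lifted paths lie relative to $x_0$, uniformly in $m$, for non-injective maps.
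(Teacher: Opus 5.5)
Your boundary case in Step~I and your construction of the limit map match the paper (Lemma~\ref{lem3}, Proposition~\ref{pr6}, and the argument of Theorem~\ref{th2}). In that case you lift paths from $f_m(|\gamma_m|)$ to $K_m$, and $f_m^{-1}(K_m)$ stays away from $x_0\in\partial D_0$ because $h(f_m^{-1}(K_m),\partial D_m)\geqslant\delta$.

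\textbf{First gap: interior points.} For $x_0\in D_0$ the class condition says nothing about how close $f_m^{-1}(K_m)$ comes to $x_0$, and your repair is circular. A subcontinuum $K_m^{\,\prime}\subset K_m$ avoiding $B_h(f_m(x_0),\eta)$ has preimage outside a fixed ball $B(x_0,\varepsilon_0^{\,\prime})$ only if $f_m(B(x_0,\varepsilon_0^{\,\prime}))\subset B_h(f_m(x_0),\eta)$ for all $m$. That inclusion is exactly the equicontinuity at $x_0$ you are trying to prove. This is what the hypothesis on $E$ is for, and your proposal never uses it; chordal boundedness of the images is automatic in $\overline{{\Bbb R}^n}$. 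The paper quotes the known interior equicontinuity of ring $Q$-mappings omitting $E$.

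Inside your own scheme, for $p=n$ you would replace $K_m$ by $E$:
\begin{itemize}
\item $M(\Gamma(f_m(|\gamma_m|),E,\overline{{\Bbb R}^n}))$ is bounded below by a constant depending only on $E$ and $a$; this is where positive capacity is used.
\item Truncate these paths at their first exit from $f_m(D_m)$ and lift them from $|\gamma_m|$. By closedness the lifts tend to $\partial D_m$ (Proposition~\ref{pr4_a}).
\item Hence they cross the fixed ring $A(x_0,2^{-m},\varepsilon^{\,\prime})$ with $\overline{B(x_0,\varepsilon^{\,\prime})}\subset D_m$, so the modulus is $o(1)$, a contradiction.
\end{itemize}

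\textbf{Second gap: the estimate in Part~II.} Your boundary preservation and (\ref{eq3A}) rest on $h(f_m(A_0),\partial f_m(D_m))\geqslant\delta_2$, which is not established. You justify $M(\Gamma(G_k,F_k,\overline{{\Bbb R}^n}))\to\infty$ by ``both have diameter bounded below'', which contradicts $G_k$ being small. If instead you build, inside the image, a continuum through $f_{m_k}(a_k)$ of diameter bounded below, nothing keeps its lift from $a_k$ in a fixed compact subset of $D_0$. A path approaching $\partial f_{m_k}(D_{m_k})$ lifts to one approaching $\partial D_{m_k}$, so you lose the upper bound through a fixed ring around $a_0$.

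The missing idea is a fixed continuum $A\subset D_0$, contained in $D_m$ for large $m$, with $h(f_m(A))$ bounded below uniformly in $m$. This non-degeneracy is not automatic and must come from the class condition. The paper obtains it as follows:
\begin{itemize}
\item Take $x_m,x_m^{\,\prime}\in f_m^{-1}(K_m)$ with $h(f_m(x_m),f_m(x_m^{\,\prime}))=h(K_m)\geqslant\delta$.
\item Use $h(f_m^{-1}(K_m),\partial D_m)\geqslant\delta$, $D_m\subset D_0$ and Proposition~\ref{pr6} to place their limits in $D_0$.
\item Join closed balls around the limits by a path to obtain $A$.
\end{itemize}
Lemma~\ref{lem4} then plays V\"{a}is\"{a}l\"{a}'s Lemma~\ref{lem5} against an upper bound. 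The lemma is applied to $f_m(A)$ and the component $E_m$ of $\partial f_m(D_m)$, whose diameters are at least $\delta$ and $r$ and whose mutual distance tends to $0$; this gives arbitrarily large modulus. The upper bound $\sum_i\omega_{n-1}/I_i^{n-1}$ comes from lifting truncated paths from $A$ and covering $A$ by finitely many balls $B(x_i,\varepsilon_i)$ with $B(x_i,2\varepsilon_i)\subset D_m$. Corollary~\ref{cor1} then passes from $A$ to an arbitrary compactum $A_0$.

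Once this estimate is in hand, your boundary-preservation argument goes through as in the paper, via $C(f_m,\partial D_m)\subset\partial f_m(D_m)$ for closed maps.
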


\medskip
Results similar to Theorems~\ref{th2} and~\ref{th4_4} also hold for
domains with prime ends. We now formulate these results.

\medskip
Definitions related to prime ends of domains can be found
in~\cite{Sev$_1$} and are therefore omitted. Consider the following
definition, which goes back to N\"akki~\cite{Na$_1$}, cf.~\cite{KR}.
The boundary of a domain $D$ in ${\Bbb R}^n$ is said to be {\it
locally quasiconformal} if every $x_0\in\partial D$ has a
neighborhood $U$ that admits a quasiconformal mapping $\varphi$ onto
the unit ball ${\Bbb B}^n\subset{\Bbb R}^n$ such that
$\varphi(\partial D\cap U)$ is the intersection of ${\Bbb B}^n$ and
a coordinate hyperplane. The sequence of cuts $\sigma_m,$
$m=1,2,\ldots ,$ is called {\it quasiconformally regular,} if
$\overline{\sigma_m}\cap\overline{\sigma_{m+1}}=\varnothing$ for
$m\in {\Bbb N}$ and, in addition, $d(\sigma_{m})\rightarrow 0$ as
$m\rightarrow\infty.$ If the end $K$ contains at least one
quasiconformally regular chain, then $K$ will be called {\it
quasiconformally regular}. We say that a bounded domain $D$ in
${\Bbb R}^n$ is {\it quasiconformally regular}, if $D$ can be
quasiconformally mapped to a domain with a locally quasiconformal
boundary whose closure is a compact in ${\Bbb R}^n,$ and, besides
that, every prime end in $D$ is quasiconformally regular. Note that
space $\overline{D}_P=D\cup E_D$ is metric, which can be
demonstrated as follows. If $g:D_0\rightarrow D$ is a quasiconformal
mapping of a domain $D_0$ with a locally quasiconformal boundary
onto some domain $D,$ then for $x, y\in \overline{D}_P$ we put:
\begin{equation}\label{eq5M}
\rho(x, y):=|g^{\,-1}(x)-g^{\,-1}(y)|\,,
\end{equation}
where the element $g^{\,-1}(x),$ $x\in E_D,$ is to be understood as
some (single) boundary point of $D_0.$ The specified boundary point
is unique and well-defined by~\cite[Theorem~2.1, Remark~2.1]{IS},
cf.~\cite[Theorem~4.1]{Na$_1$}. It is easy to verify that~$\rho$
in~(\ref{eq5M}) is a metric on $\overline{D}_P.$ If $g_*$ is another
quasiconformal mapping of a domain $D_*$ with locally quasiconformal
boundary onto $D$, then the corresponding metric
$\rho_*(p_1,p_2)=|{\widetilde{g_*}}^{-1}(p_1)-{\widetilde{g_*}}^{-1}(p_2)|$
generates the same convergence and, consequently, the same topology
in $\overline {D}_P$ as $\rho_0$ because $g_0\circ g_*^{-1}$ is a
quasiconformal mapping of $D_*$ and $D_0$, which extends, by Theorem
4.1 in~\cite{Na$_1$}, to a homeomorphism between $\overline {D_*}$
and $\overline {D_0}$. In the sequel, this topology in $\overline
{D}_P$ will be called the {\it topology of prime ends}; the
continuity of mappings $F\colon \overline
{D}_P\rightarrow\overline{D^{\,\prime}}_P$ will be understood
relative to this topology.

\medskip
Let $D_m,$ $m=1,2,\ldots ,$ be a sequence of domains which converges
to a kernel $D_0.$ Then $D_m$ will be called {\it regular} with
respect to $D_0$ in the terms of prime ends, if $D_m\subset D_0$ for
all $m\in {\Bbb N}$ and, for every $P_0\in E_{D_0}$ there is a
sequence $\sigma_k,$ $k=1,2,\ldots,$ with the following condition:
if $d_k$ is a domain in $P_0$ then there is $M=M(k)$ such that
$d_k\cap D_m$ is a non-empty connected set for every $m\geqslant
M(k).$ The following result holds.

\medskip
\begin{theorem}\label{th4}
{\it\, Let $p\in (n-1, n]$ and let $f_m\in \frak{F}_{Q, A, p,
\delta}(D_0, \frak{D}),$ $m=1,2,\ldots ,$ be a sequence such that:

\medskip
1) the sequence of domains $D_m$ is regular with respect to~$D_0$ in
terms of prime ends;

\medskip
2) for every $m\in {\Bbb N},$ a domain $D_m$ is quasiconformally
regular; $D_0$ is also quasiconformally regular;

\medskip
3) the family $f_m(D_m)$ is equi-uniform with respect to $p$-modulus
over all $m\in {\Bbb N}$;

\medskip
4) at least one of two following conditions hold: $Q$ has a finite
mean oscillation in $\overline{D_0},$ or (\ref{eq2_4}) holds
for every $x_0\in \overline{D_0}$ and some $\beta(x_0)>0.$

\medskip
\textbf{I.} Then the family $f_m,$ $m=1,2,\ldots,$ is uniformly
equicontinuous in $\frak{D}$ in terms of prime ends, i.e., for any
$\varepsilon>0$ there is $\delta=\delta(\varepsilon)>0$ such that
$h(f_m(x), f_m(y))<\varepsilon$ whenever $x,y\in D_m,$ $\rho(x,
y)<\delta,$ $m\in {\Bbb N}$ and $\rho$ is one of the metric in
$\overline{D_0}_P$ defined in~(\ref{eq5M}).

\medskip
\textbf{II.} Moreover, there is a subsequence $f_{m_k},$
$k=1,2,\dots ,$ which converges to $f$ locally uniformly in $D_0$
with respect $\rho.$ In this case, $f$ has a continuous boundary
extension $f:\overline{D_0}_P\rightarrow \overline{{\Bbb R}^n}$ and
$f$ is a homeomorphism in $D_0.$ If $A_0$ is some compactum in
$D_0,$ then there is $\delta_2>0$ and $M_0\in {\Bbb N}$ such that
$A_0\subset D_m$ for all $m\geqslant M_0$ and $h(f_m(A_0),
\partial f_m(D_m))\geqslant \delta_2$ for every $m\geqslant M_0.$

\medskip
\textbf{III.} If $x_m\in D_{m},$ $m=1,2,\ldots,$ $f_m$ converges to
$f$ locally uniformly in $D_0$ and $x_m\rightarrow P_0$ as
$m\rightarrow \infty,$ then $f_{m}(x_m)\rightarrow f(P_0).$ If
$P_0\in E_{D_0}:=\overline{D_0}_P\setminus D_0,$ then $f(P_0)\in
\partial f(D_0).$}
\end{theorem}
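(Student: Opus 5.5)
The plan is to reduce Theorem~\ref{th4} to the scheme behind Theorem~\ref{th2}, replacing the Euclidean metric on $\overline{D_0}$ by the prime-end metric $\rho$ from~(\ref{eq5M}). Fix a quasiconformal $g:D_*\to D_0$ with $D_*$ having a locally quasiconformal boundary, so $\overline{D_0}_P$ is homeomorphic to the compact set $\overline{D_*}$. Part~I will be argued by contradiction. Suppose there are $\varepsilon_0>0$, indices $m_k$ and points $x_k,y_k\in D_{m_k}$ with $\rho(x_k,y_k)\to0$ but $h(f_{m_k}(x_k),f_{m_k}(y_k))\geqslant\varepsilon_0$. By compactness of $\overline{D_0}_P$ we may assume $x_k,y_k\to P_0\in\overline{D_0}_P$.

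If $P_0\in D_0$, a small closed ball around $P_0$ lies in $D_m$ for large $m$ by the kernel property. The argument then reduces to the interior equicontinuity estimate obtained exactly as in Theorem~\ref{th2}.

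The essential case is $P_0\in E_{D_0}$. Take a quasiconformally regular chain of cuts $\sigma_j$ for $P_0$ with $d(\sigma_j)\to0$, and let $d_j$ be the corresponding domains. Condition~1) gives $M(j)$ such that $d_j\cap D_m$ is nonempty and connected for $m\geqslant M(j)$. Convergence $x_k,y_k\to P_0$ in $\rho$ means that $x_k,y_k\in d_j$ for $k$ large, so we can join them by a path $\gamma_k\subset d_j\cap D_{m_k}$. The image $f_{m_k}(\gamma_k)$ is a continuum $C_k$ with $h(C_k)\geqslant\varepsilon_0$. The continua $f_{m_k}(A)$ satisfy $h\geqslant\delta$ by the definition of $\frak F_{Q,A,p,\delta}$. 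Condition~3) then yields
\[
M_p\bigl(\Gamma(C_k,f_{m_k}(A),f_{m_k}(D_{m_k}))\bigr)\geqslant\delta_*>0 .
\]
For the upper bound, note that $\sigma_j$ lies in a small ball $B(z_j,r_j)$ with $z_j\in\partial D_0$, $r_j\to0$, and that $A$ is at positive distance from $z_j$. Every path in $\Gamma(\gamma_k,A,D_{m_k})$ must leave $d_j$ through $\sigma_j$, so it crosses an annulus $A(z_j,r_j,\varepsilon_1)$. Apply~(\ref{eq2*!}) at $z_j\in\overline{D_0}$ with the standard choice of $\eta$: either the one built from~(\ref{eq2_4}), or the logarithmic FMO one. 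This makes the modulus of the image family smaller than any prescribed number, uniformly in $k$, once $j$ is large. That contradicts the lower bound $\delta_*$.

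The main obstacle is this uniformity. The centers $z_j$ vary with $j$, so the smallness of $\int Q\eta^p$ must be obtained uniformly over $z\in\partial D_0$. I would handle it by fixing a single point $z_0$ that is a limit of the $z_j$ and running the annulus estimate at $z_0$ with radii $2r_j+|z_j-z_0|\to0$. Regularity of the chain, $d(\sigma_j)\to0$ with the $\sigma_j$ shrinking to an impression point, makes this possible.

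Part~II follows from Part~I in the same way as in Theorem~\ref{th2}. Uniform equicontinuity plus Arzel\`a--Ascoli on compacta exhausting $D_0$ gives a locally uniform limit $f$. Injectivity of $f$ follows from the nondegeneracy $h(f_m(A))\geqslant\delta$ together with the standard Hurwitz-type argument for ring $Q$-homeomorphisms, as in Theorem~\ref{th2}. The extension to $E_{D_0}$ is defined through limits along $d_j$. Uniform equicontinuity in $\rho$ shows this extension is well defined and continuous on $\overline{D_0}_P$. The bound $h(f_m(A_0),\partial f_m(D_m))\geqslant\delta_2$ is proved by contradiction, again pairing the equi-uniform lower bound with the modulus upper bound near $\partial D_0$; this step does not depend on prime ends.

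For Part~III, the convergence $f_m(x_m)\to f(P_0)$ comes from the triangle inequality with equicontinuity. Pick $x'\in D_0$ close to $P_0$ in $\rho$, compare $f_m(x_m)$ with $f_m(x')$, and use $f_m(x')\to f(x')$ and continuity of the extension. Finally, suppose $f(P_0)\in f(D_0)$, say $f(P_0)=f(a)$ with $a\in D_0$. Choose $x_m\to P_0$ and use the estimate $h(f_m(A_0),\partial f_m(D_m))\geqslant\delta_2$ for a small ball $A_0$ around $a$. Then $f_m(x_m)$ eventually lies in $f_m(A_0)$, so $x_m$ lies in the compact set $A_0$ by injectivity. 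This contradicts $x_m\to P_0\in E_{D_0}$, hence $f(P_0)\in\partial f(D_0)$.
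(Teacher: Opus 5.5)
Your Part~\textbf{I} follows the paper's argument: contradiction, compactness of $\overline{D_0}_P$, joining the two points inside $d_j\cap D_m$, the equi-uniform lower bound, and the ring estimate at a fixed boundary point. Passing to a subchain of a null-chain so that $\sigma_j\subset B(z_0,\rho_j)$ with $\rho_j\to0$ is a fine substitute for the paper's cuts on spheres $S(z_0,r_m)$. Condition~1) gives connectedness only for its own chain, so you must go through an equivalent chain with $d^{\,\prime}_k\subset d_j$, as the paper does.

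The genuine gap is that both of your cases silently assume $m_k\to\infty$. Condition~1) and the kernel property say nothing about a fixed $D_{m^*}$. If infinitely many bad pairs lie in one $D_{m^*}$, you can neither join them inside $d_j\cap D_{m^*}$ nor place a ball around $P_0$ inside $D_{m^*}$. The paper treats this case separately by citing Lemma~\ref{lem6} for the single map $f_{k_0}$. That lemma, however, gives continuity in the prime-end topology of $D_{k_0}$, not with respect to $\rho$, and the case cannot be rescued. Take $D_0=D_m={\Bbb D}$ and $f_m={\rm id}$ for $m\geqslant 2$. Take $D_1={\Bbb D}\setminus[\frac12,1)$ with $f_1$ a conformal map onto ${\Bbb D}$, and set $A=\overline{B(0,1/4)}$, $Q\equiv 1$, $p=n=2$. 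All hypotheses 1)--4) hold. Yet for $x=\frac34+is$, $y=\frac34-is$ we have $\rho(x,y)=2s\to 0$, while $f_1(x)$ and $f_1(y)$ tend to two distinct points of $\partial{\Bbb D}$. So your argument, like the paper's, proves the estimate in \textbf{I} only for $m\geqslant m_0$. You should state it that way and note that this is all that \textbf{II} and \textbf{III} use.

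There are two smaller problems. In \textbf{III}, the step ``$f_m(x_m)$ eventually lies in $f_m(A_0)$'' does not follow from $h(f_m(A_0),\partial f_m(D_m))\geqslant\delta_2$. That bound controls the distance to $\partial f_m(D_m)$, not to $f_m(\partial A_0)$. Argue directly, as the paper does: $f$ is a homeomorphism of $D_0$ onto the open set $f(D_0)$. If $f(P_0)=f(a)$ with $a\in D_0$, then by continuity of $f^{-1}$ any $x_k\in D_0$ with $x_k\to P_0$ would converge to $a$. That is impossible, since such $x_k$ leave every compact subset of $D_0$.

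For $\delta_2$ in \textbf{II}, your ``pairing'' of the equi-uniform lower bound with the modulus upper bound is too vague to check; it is unclear which two continua of definite size you would use. The paper's route is concrete. Take a domain $G$ with $A_0\subset G$ and $\overline{G}\subset D_0$. Uniform convergence on $\overline G$ to the homeomorphism $f$ gives $h(f_m(A_0),f_m(\partial G))\geqslant\delta_2$ for large $m$. Then apply Proposition~\ref{pr6} to $f_m(G)\subset f_m(D_m)$.
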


\medskip
\begin{theorem}\label{th5}
{\,\it Let $p\in (n-1, n]$ and let $f_m\in \frak{R}_{Q, \delta, p,
E}(D_0, \frak{D}),$ $m=1,2,\ldots ,$ be a sequence such that the
conditions~1)--4) from Theorem~\ref{th4} hold. Let $E$ be a set of a
positive capacity for $p=n,$ and $E$ is arbitrary closed set for
$n-1<p<n.$

\medskip
\textbf{I.} Then the family $f_m,$ $m=1,2,\ldots,$ is uniformly
equicontinuous in $\frak{D}$ with respect to prime ends, i.e., for
any $\varepsilon>0$ there is $\delta=\delta(\varepsilon)>0$ such
that $h(f_m(x), f_m(y))<\varepsilon$ whenever $x,y\in D_m,$ $\rho(x,
y)<\delta,$ $m\in {\Bbb N}$ and $\rho$ is one of the metric in
$\overline{D_0}_P.$ Moreover, there is a subsequence $f_{m_k},$
$k=1,2,\dots ,$ which converges to $f$ locally uniformly in $D_0.$
In this case, $f$ has a continuous boundary extension
$f:\overline{D_0}_P\rightarrow \overline{{\Bbb R}^n}.$ If $x_m\in
D_{m},$ $m=1,2,\ldots,$ $f_m$ converges to $f$ locally uniformly in
$D_0$ and $x_m\rightarrow P_0$ as $m\rightarrow \infty,$ then
$f_{m}(x_m)\rightarrow f(P_0).$

\medskip
\textbf{II.} Besides that, if the condition~5) of
Theorem~\ref{th4_4} holds, then for any compactum $A_0\subset D_0,$
$A_0\subset\bigcap\limits_{m=1}^{\infty}D_m,$ there exists
$\delta_1>0$ such that $h(f_m(A_0),
\partial f_m(D_m))\geqslant \delta_1$ for any $m=1,2,\ldots .$ The mapping
$f$ is boundary preserving: if $P_0\in E_{D_0},$ then $f(x_0)\in
\partial f(D_0).$  If $\overline{B(x_0, \varepsilon_0)}\subset
D_0\cap\bigcap\limits_{m=1}^{\infty}D_m,$ then there is
$\varepsilon_1>0$ such that the relation~(\ref{eq3A}) holds, where
$B_h(y_0, r_0)=\{y\in\overline{{\Bbb R}^n}: h(y, y_0)<r_0\}.$ }
\end{theorem}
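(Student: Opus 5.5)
The natural plan is to reduce Theorem~\ref{th5} to Theorem~\ref{th4_4} by a change of variables in the domains. Each $D_m$ is quasiconformally regular, and so is $D_0$. So there are quasiconformal maps $g_m:D_m^*\rightarrow D_m$ and $g_0:D_0^*\rightarrow D_0$, where $D_m^*$ and $D_0^*$ have locally quasiconformal boundaries and compact closures. Prime ends of $D_m$ then correspond to boundary points of $D_m^*$, and the metric $\rho$ in~(\ref{eq5M}) becomes the Euclidean distance in $D_0^*$. I would work with $F_m:=f_m\circ g_0^{-1}$ restricted to $g_0(D_m)$ (recall $D_m\subset D_0$) and show that the family $F_m$ falls under Theorem~\ref{th4_4}, with a modified function $Q$.

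The first step is the modulus inequality for the composition. Since $g_0^{-1}$ is $K$-quasiconformal, for $p=n$ the inequality $M(g_0^{-1}(\Gamma))\leqslant K M(\Gamma)$ combined with~(\ref{eq2*!}) gives a ring $\widetilde Q$-inequality for $F_m$ at boundary points of $g_0^{-1}(D_0)$. Here $\widetilde Q$ comes from $Q\circ g_0$ times a Jacobian factor. For $p\ne n$ I would instead argue directly in the spirit of~\cite{Sev$_1$}. Distortion of annuli is controlled through a quasiconformal boundary chart, so I expect both alternatives in condition~4) to survive the change of variables up to constants. This is the standard route in~\cite{Sev$_1$}, and I expect it to hold either as a known lemma or at least for the moduli estimates actually used in the proof.

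The second step is to transfer the remaining hypotheses. The prime-end regularity in condition~1) translates into ordinary regularity of $g_0^{-1}(D_m)$ with respect to $D_0^*$. Indeed, the domains $d_k$ of a quasiconformally regular chain map to neighbourhoods of the corresponding boundary point intersected with $D_0^*$, because cut diameters go to zero under $g_0^{-1}$ by~\cite[Theorem~2.1]{IS}. Condition~2), local connectivity of the preimage domains on the boundary, follows from the local quasiconformality of boundaries. Condition~3) is unaffected, since the image domains $f_m(D_m)$ are unchanged. The class conditions carry over as well. Openness, discreteness, closedness and omission of $E$ are preserved, and $h(F_m^{-1}(K_m),\partial)\geqslant\delta'$ holds because $g_0^{-1}$ is uniformly continuous on the compact closure.

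The third step is to apply Theorem~\ref{th4_4}. Part I gives equicontinuity of $F_m$ in the Euclidean metric, which is exactly equicontinuity of $f_m$ in $\rho$. It also gives a locally uniformly convergent subsequence with continuous extension to $\overline{D_0^*}$, which is the extension of $f$ to $\overline{D_0}_P$, and convergence $f_m(x_m)\rightarrow f(P_0)$. Part II likewise follows from Theorem~\ref{th4_4}\,II: the compactum $g_0^{-1}(A_0)$ gives the distance $\delta_1$, boundary preservation follows because $P_0$ corresponds to a boundary point, and~(\ref{eq3A}) holds because a Euclidean ball around $x_0$ maps onto a neighbourhood containing a ball.

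I expect the main obstacle to be the second step: showing that regularity in terms of prime ends gives the neighbourhood-type regularity needed in Theorem~\ref{th4_4}, together with transferring condition~4) for $p<n$. If the direct reduction fails there, I would instead repeat the proof of Theorem~\ref{th4_4}, replacing spherical neighbourhoods $U$, $V$ of $x_0$ by the domains $d_k$ of a prime-end chain. The moduli of the paths joining $d_k$ to a fixed compactum are then estimated via~(\ref{eq2_4}) or FMO at the point $x_0$ to which $\sigma_k$ contracts, and compared with the lower bound coming from equi-uniformity of $f_m(D_m)$.
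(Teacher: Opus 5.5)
\textbf{Gap in the main route.} Your main route is a black-box reduction to Theorem~\ref{th4_4} through $F_m=f_m\circ g_0$ on $g_0^{-1}(D_m)$; you wrote $f_m\circ g_0^{-1}$ on $g_0(D_m)$, which reverses the direction of $g_0$. It has a step that fails: condition~2) of Theorem~\ref{th2} for the domains $g_0^{-1}(D_m)$.

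\begin{itemize}
\item The map $g_0$ is the model map of $D_0$, not of $D_m$, and it is a homeomorphism of $D_0^*$ onto $D_0$. So $g_0^{-1}(D_m)$ is locally connected at $g_0^{-1}(x)$, $x\in\partial D_m\cap D_0$, exactly when $D_m$ is locally connected at $x$. Quasiconformal regularity of $D_m$ does not give this.
\item Counterexample: let $D_0=\mathbb{D}$, $g_0={\rm id}$, $D_m=\mathbb{D}\setminus[1-\frac{1}{m+1},1)$, $f_m$ the conformal map of $D_m$ onto $\mathbb{D}$ with $f_m(0)=0$, $Q\equiv 1$, $p=n=2$, $K_m=\overline{B(0,1/2)}$. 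As far as I can check, all hypotheses of Theorem~\ref{th5} hold. In particular, the slits end on $\partial\mathbb{D}$ and shrink, so $d_k\cap D_m$ is connected for $m\geqslant M(k)$. Yet $D_m$ is not locally connected along its slit.
\item Moreover, for fixed $m$ the points $t\pm i/k$, $t\in(1-\frac{1}{m+1},1)$, are $\rho$-close, while their $f_m$-images tend to two distinct points of $\partial\mathbb{D}$ (the two sides of the slit). So the uniformity over \emph{all} $m\in\mathbb{N}$ in item~I fails in this example. What one can hope to prove is the estimate for $m\geqslant M(\varepsilon)$, which is what the argument with increasing $m_k$ actually gives. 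The paper's disposal of bounded $m_k$ through Lemma~\ref{lem6} has the same defect: that lemma gives continuity with respect to the prime ends of $D_{k_0}$, not with respect to $\rho$.
\item The transfer of the ring inequality is also unsupported. (\ref{eq2*!}) involves only radial $\eta$ about points of $\overline{D_0}$. A boundary point of $D_0^*$ corresponds to a prime end of $D_0$, not to a point of $\partial D_0$. Composing with $g_0$ does not give a ring $\widetilde Q$-inequality with $\widetilde Q$ built from $Q\circ g_0$ and a Jacobian, still less one satisfying FMO or (\ref{eq2_4}).
\end{itemize}

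\textbf{Your fallback.} This is the paper's actual route (Lemma~\ref{lem7} with Proposition~\ref{pr1}), but as sketched it misses the step that handles branching. There is no fixed compactum in the domain, and pushing forward paths joining $d_k$ to a compactum does not produce the family $\Gamma(K_m,C_m,D^{\,\prime}_m)$ to which equi-uniformity applies. The paper proceeds as follows.
\begin{itemize}
\item Join $x_m,x^{\,\prime}_m$ by $\gamma_m$ inside $d^{\,\prime}_m\cap D_m$. Take all paths of $D^{\,\prime}_m$ from $C_m=f_m(|\gamma_m|)$ to $K_m$; by equi-uniformity this family has $p$-modulus $\geqslant b$.
\item Lift these paths by Proposition~\ref{pr4_a} to total $f_m$-liftings starting on $|\gamma_m|$; closedness makes them end on $f_m^{-1}(K_m)$.
\item Since $h(f_m^{-1}(K_m),\partial D_m)\geqslant\delta$ and $D_m\subset D_0$, Proposition~\ref{pr6} keeps $f_m^{-1}(K_m)$ away from $\partial D_0$, hence outside $d^{\,\prime}_1$. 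So every lift crosses $S(z_0,r_m)$ and $S(z_0,r_1)$, the spheres carrying the cuts.
\item Then (\ref{eq2*!}) at $z_0$ with $\eta=\psi/I(r_m,r_1)$ forces $M_p(\Gamma(K_m,C_m,D^{\,\prime}_m))\rightarrow 0$, a contradiction.
\end{itemize}
Item~II is then obtained directly, as in Theorem~\ref{th4_4}, from Lemma~\ref{lem4}, Corollary~\ref{cor1} and kernel convergence, not through a reduction.
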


\section{Preliminaries}

The following statement is true, see~\cite[Lemma~2.1]{Sev$_1$}.

\medskip
\begin{lemma}\label{lem1}{\it\, Let $p\geqslant 1,$ and let
$f:D\rightarrow \overline{{\Bbb R}^n}$ be an open discrete ring
$Q$-mapping at the point $b\in
\partial D$ with respect to $p$-modulus, $b\ne \infty,$
$f(D)=D^{\,\prime},$ a domain $D$ is locally connected at the point
$b,$ $C(f,
\partial D)\subset \partial D^{\,\prime}$
and $D^{\,\prime}$ is strongly accessible with respect to the
$p$-modulus at least at one point $y\in C(f, b),$ where $C(f, b)$ is
a cluster set of $f$ at $b.$ Suppose that there is $\varepsilon_0>0$
and some positive measurable function $\psi:(0,
\varepsilon_0)\rightarrow (0,\infty)$ such that
\begin{equation*}\label{eq7***}
0<I(\varepsilon,
\varepsilon_0)=\int\limits_{\varepsilon}^{\varepsilon_0}\psi(t)\,dt
< \infty
\end{equation*}
for any $\varepsilon\in(0, \varepsilon_0)$ and, in addition,
\begin{equation*}\label{eq5***}
\int\limits_{A(b, \varepsilon, \varepsilon_0)}
Q(x)\cdot\psi^{\,p}(|x-b|)
 \ dm(x) =o(I^p(\varepsilon, \varepsilon_0))\,,
\end{equation*}
where $A:=A(b, \varepsilon, \varepsilon_0)$ is defined
in~(\ref{eq1**}). Then $C(f, b)=\{y\}.$}
\end{lemma}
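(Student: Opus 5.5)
We argue by contradiction. Suppose $C(f,b)$ contains a second point $z\neq y$. Since $b$ is a boundary point of the locally connected domain $D$ and $C(f,\partial D)\subset\partial D^{\,\prime}$, both $y$ and $z$ lie in $\partial D^{\,\prime}$. Hence there are sequences $x_k,x_k^{\,\prime}\to b$ in $D$ with $f(x_k)\to y$ and $f(x_k^{\,\prime})\to z$.

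Put $r_0:=h(y,z)>0$. Let $U$ be the neighborhood $B_h(y,r_0/2)$ of $y$, and apply the strong accessibility of $D^{\,\prime}$ at $y$ to $U$. This produces a neighborhood $V\subset U$ of $y$, a compactum $F\subset D^{\,\prime}$ and a number $\delta>0$ such that $M_p(\Gamma(E,F,D^{\,\prime}))\geqslant\delta$ for every continuum $E\subset D^{\,\prime}$ meeting both $\partial U$ and $\partial V$.

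Next we connect the two sequences near $b$. Take $\varepsilon\in(0,\varepsilon_0)$. By local connectedness at $b$ there is a neighborhood $W\subset B(b,\varepsilon)$ of $b$ with $W\cap D$ connected. For large $k$ both $x_k$ and $x_k^{\,\prime}$ lie in $W\cap D$, so they can be joined by a path $\alpha_k$ in $W\cap D\subset B(b,\varepsilon)$. Its image $f(\alpha_k)$ starts near $y$ (inside $V$) and ends near $z$ (outside $\overline U$). Therefore $f(\alpha_k)$ is a continuum in $D^{\,\prime}$ meeting $\partial U$ and $\partial V$, and consequently $M_p(\Gamma(f(|\alpha_k|),F,D^{\,\prime}))\geqslant\delta$.

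Now we bound this family from above. The set $C:=f^{-1}(F)$ is a compact subset of $D$; this is where $C(f,\partial D)\subset\partial D^{\,\prime}$ is used, since it makes $f$ behave like a proper map. So we may choose $\varepsilon_0$ smaller, if necessary, so that $C\cap \overline{B(b,\varepsilon_0)}=\varnothing$. Every path in $\Gamma(f(|\alpha_k|),F,D^{\,\prime})$ has a maximal $f$-lifting starting on $|\alpha_k|$, by the path-lifting theory for open discrete maps (Rickman/Vuorinen). Because $C(f,\partial D)\subset\partial D^{\,\prime}$ and $F$ is compact in $D^{\,\prime}$, this lifting cannot approach $\partial D$, so it ends in $C$. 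Each lifting therefore joins $B(b,\varepsilon)$ to the complement of $B(b,\varepsilon_0)$, which gives
$$\Gamma(f(|\alpha_k|),F,D^{\,\prime})>f(\Gamma(S(b,\varepsilon),S(b,\varepsilon_0),D)).$$

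We then apply the ring $Q$-inequality~(\ref{eq2*!}) with $\eta(t)=\psi(t)/I(\varepsilon,\varepsilon_0)$, which is admissible in the sense of~(\ref{eq8B_2}). This yields
$$\delta\leqslant I^{-p}(\varepsilon,\varepsilon_0)\int_{A(b,\varepsilon,\varepsilon_0)}Q(x)\,\psi^p(|x-b|)\,dm(x)\to 0 \quad\text{as }\varepsilon\to0,$$
which is a contradiction. Hence $C(f,b)=\{y\}$.

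The main obstacle is the lifting step. It requires showing that the maximal liftings terminate in the compact set $f^{-1}(F)$ rather than escaping to $\partial D$, which needs the standard lemma on maximal liftings together with the cluster-set condition. A secondary point is that we also need $\Gamma(S_1,S_2,D)$ to contain the relevant subpaths, which follows from the minorization property of the modulus.
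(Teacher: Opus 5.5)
Your proof is correct. The paper gives no proof of this lemma and only refers to Lemma~2.1 of [Sev$_1$]. Your argument is the standard one behind that citation: two cluster points $y\neq z$, strong accessibility at $y$, joining $x_k,x_k^{\,\prime}$ through local connectedness at $b$, compactness of $f^{-1}(F)$ in $D$ from $C(f,\partial D)\subset\partial D^{\,\prime}$ so that maximal liftings are total, and the ring inequality with $\eta=\psi/I$. It is also the scheme the paper itself uses in Lemmas~\ref{lem2} and~\ref{lem3}.

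The one step you leave unjustified is shrinking $\varepsilon_0$, since this lemma does not assume $I(\varepsilon,\varepsilon_0)\to\infty$. It is harmless: either $I(\varepsilon,\varepsilon_0)\to\infty$, in which case $I(\varepsilon,\varepsilon_1)\sim I(\varepsilon,\varepsilon_0)$; or $I(\varepsilon,\varepsilon_0)$ stays bounded, in which case the integral $\int_{A(b,\varepsilon,\varepsilon_0)}Q\,\psi^p\,dm$, which is nondecreasing as $\varepsilon\downarrow 0$, must vanish identically.
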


\medskip
Observe that, $C(f,
\partial D)\subset \partial D^{\,\prime}$ for homeomorphisms of $D$ onto
$D^{\,\prime},$ see e.g. \cite[Proposition~13.5]{MRSY}.

\medskip
We will prove the main assertions of the paper in a somewhat
more general form. In this regard, consider the following
definition. We will say that a point $x_0$ is a limit point of a
family of domains $\frak{D}$ in ${\Bbb R}^n$ if there exists a
sequence $D_m,$ $m=1,2,\ldots, $ in $\frak{D}$ and a sequence
$x_m\in D_m,$ such that $x_m\rightarrow x_0$ as $m\rightarrow
\infty.$ Let $\overline{\frak{D}}$ be a set of all such points
$x_0.$

\medskip
Given $p\geqslant 1,$ a number $\delta>0,$ a family $\frak{D}$ of
domains in ${\Bbb R}^n,$ $n\geqslant 2,$ and a Lebesgue measurable
function $Q:{\Bbb R}^n\rightarrow[0, \infty]$, we denote by
$\frak{H}^{p}_{Q, \delta}(\frak{D})$ a family of all homeomorphisms
$f:D\rightarrow \overline{{\Bbb R}^n}$ defined in some
$D\in\frak{D}$ and satisfying~(\ref{eq2*!})--(\ref{eq8B_2}) at any
point $x_0\in\overline{\frak{D}},$ $0<r_1<r_2<\infty,$ for which
there exists a continuum $A=A_f\subset D$ with $h(f(A))\geqslant
\delta.$

\medskip
The following lemma asserts the equicontinuity of a certain family
of mappings at a fixed point. Unlike the main result of
Theorem~\ref{th2}, this assertion does not assume the convergence of
the sequence of domains to its kernel, and the corresponding domains
in which the mappings are defined can be arbitrary. The following
statement is true.

\medskip
\begin{lemma}\label{lem2}
{\it\, Let $p\in (n-1, n],$ let $x_0\in \overline{\frak{D}}$ and let
$f_m\in \frak{H}^p_{Q, \delta}(\frak{D}),$ $m=1,2,\ldots ,$
$f_m:D_m\rightarrow \overline{{\Bbb R}^n}$ be a sequence such that:

\medskip
1) for every $m\in {\Bbb N},$ $D_m$ is locally connected at
$\partial D_m;$

2) for every neighborhood $U$ of $x_0$ there is a neighborhood $V$
of $x_0,$ $V\subset U,$ and $M=M(U, x_0)\in {\Bbb N}$ such that
$V\cap D_m$ is connected for every $m\geqslant M(U, x_0);$

3) there is $\varepsilon_0>0$ such that $A_m:=A_{f_m}\subset {\Bbb
R}^n\setminus B(x_0, \varepsilon_0),$ where $A_{f_m}$ is a continuum
corresponding to the definition of the class $\frak{H}^p_{Q,
\delta}(\frak{D})$ for $f_m;$

4) the family $f_m(D_m)$ is equi-uniform with respect to $p$-modulus
over all $m\in {\Bbb N};$

5) there exists $\varepsilon_0=\varepsilon_0(x_0)>0$ and a Lebesgue
measurable function $\psi:(0, \varepsilon_0)\rightarrow [0,\infty]$
such that
\begin{equation}\label{eq7***_2} I(\varepsilon,
\varepsilon_0):=\int\limits_{\varepsilon}^{\varepsilon_0}\psi(t)\,dt
< \infty\quad \forall\,\,\varepsilon\in (0, \varepsilon_0)\,,\quad
I(\varepsilon, \varepsilon_0)\rightarrow
\infty\quad\text{as}\quad\varepsilon\rightarrow 0\,,
\end{equation}
while
\begin{equation} \label{eq3.7.2_2}
\int\limits_{A(x_0, \varepsilon, \varepsilon_0)}
Q(x)\cdot\psi^{\,p}(|x-x_0|)\,dm(x) = o(I^p(\varepsilon,
\varepsilon_0))\,,\end{equation}
as $\varepsilon\rightarrow 0,$ where $A(x_0, \varepsilon,
\varepsilon_0)$ is defined~in (\ref{eq1**}). (If $x_0=\infty,$
(\ref{eq3.7.2_2}) means the similar relation for $x_0=0$ with
$\widetilde{Q}(x)=Q\left(\frac{x}{|x|^2}\right)$ instead of $Q(x)$).

\medskip
Then the family $f_m,$ $m=1,2,\ldots,$ is equicontinuous with
respect to $x_0,$ i.e., for any $\varepsilon>0$ there is
$\delta=\delta(x_0, \varepsilon)>0$ such that $h(f_m(x),
f_m(x^{\,\prime}))<\varepsilon$ whenever $x, x^{\,\prime}\in B(x_0,
\delta)\cap D_m$ and $m\in {\Bbb N}.$}
\end{lemma}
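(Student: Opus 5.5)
We argue by contradiction, following the standard scheme for equicontinuity of ring $Q$-homeomorphisms. Suppose the family is not equicontinuous at $x_0$. Then there are $\varepsilon_*>0$, a subsequence (still denoted $f_m$), and points $x_m, x^{\,\prime}_m\in D_m$ with $x_m, x^{\,\prime}_m\rightarrow x_0$ and
$$h(f_m(x_m), f_m(x^{\,\prime}_m))\geqslant\varepsilon_*.$$
For a fixed small $\varepsilon\in(0,\varepsilon_0)$ we use condition~2) with $U=B(x_0,\varepsilon)$. It gives a neighborhood $V\subset U$ and an index $M$ such that $V\cap D_m$ is connected for $m\geqslant M$. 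For large $m$ we have $x_m, x^{\,\prime}_m\in V\cap D_m$, so they can be joined by a path $\gamma_m$ in $V\cap D_m\subset B(x_0,\varepsilon)$. Then $C_m:=f_m(|\gamma_m|)$ is a continuum in $f_m(D_m)$ with $h(C_m)\geqslant\varepsilon_*$. By condition~3), $A_m\subset{\Bbb R}^n\setminus B(x_0,\varepsilon_0)$, and $f_m(A_m)$ is a continuum in $f_m(D_m)$ with $h(f_m(A_m))\geqslant\delta$. Condition~4) (equi-uniformity) then yields a constant $\delta_*>0$, independent of $m$ and $\varepsilon$, such that
$$M_p(\Gamma(f_m(A_m), C_m, f_m(D_m)))\geqslant\delta_*.$$
Condition~1) is used only to make sure these continua sit inside the domains; it costs nothing here, since $\gamma_m$ lies in $D_m$.

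Next we bound the same modulus from above. Since $f_m$ is a homeomorphism, every path of $\Gamma(f_m(A_m), C_m, f_m(D_m))$ is the image of a path of $\Gamma(A_m,|\gamma_m|,D_m)$. Such a path joins $|\gamma_m|\subset B(x_0,\varepsilon)$ to $A_m$, which lies outside $B(x_0,\varepsilon_0)$, so it has a subpath in $\Gamma(S(x_0,\varepsilon),S(x_0,\varepsilon_0),D_m)$. Hence, by minorization,
$$M_p(\Gamma(f_m(A_m), C_m, f_m(D_m)))\leqslant M_p(f_m(\Gamma(S(x_0,\varepsilon),S(x_0,\varepsilon_0),D_m))).$$
In the ring inequality~(\ref{eq2*!}) we take $\eta(t)=\psi(t)/I(\varepsilon,\varepsilon_0)$ on $(\varepsilon,\varepsilon_0)$, which is admissible in the sense of~(\ref{eq8B_2}). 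This gives
$$\delta_*\leqslant\frac{1}{I^p(\varepsilon,\varepsilon_0)}\int\limits_{A(x_0,\varepsilon,\varepsilon_0)}Q(x)\,\psi^p(|x-x_0|)\,dm(x).$$
By~(\ref{eq3.7.2_2}) the right-hand side tends to $0$ as $\varepsilon\rightarrow0$, which is a contradiction.

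The main obstacle is the order of quantifiers. The path $\gamma_m$ must be built for $m\geqslant M(B(x_0,\varepsilon),x_0)$, and $\varepsilon$ must be fixed small enough beforehand that the $o(\cdot)$ bound drops below $\delta_*$. This works because $\delta_*$ depends only on $\varepsilon_*$ and $\delta$. We therefore fix $\delta_*$ first, then choose $\varepsilon$, then take $m$ large. The case $x_0=\infty$ reduces to $x_0=0$ via the inversion $x\mapsto x/|x|^2$, as indicated in the statement. When $p<n$, the possibility that the endpoints of paths in $f_m(D_m)$ lie at $\infty$ needs only the usual remark that equi-uniformity is stated for continua in $\overline{{\Bbb R}^n}$ with the chordal metric.
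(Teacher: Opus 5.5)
Your core argument matches the paper's: join $x_m,x^{\,\prime}_m$ inside $V\cap D_m$ via condition~2), get a lower bound from equi-uniformity, and an upper bound from the ring inequality with $\eta=\psi/I(\varepsilon,\varepsilon_0)$. Fixing $\varepsilon$ before letting $m$ be large is a harmless variant of the paper's diagonal choice $U_k=B(x_0,2^{-k})$.

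The gap is in your first step. Negating equicontinuity only gives indices $m_k$ and points $x_k,x^{\,\prime}_k\in B(x_0,1/k)\cap D_{m_k}$ with $h(f_{m_k}(x_k),f_{m_k}(x^{\,\prime}_k))\geqslant\varepsilon_*$. Nothing forces $m_k\rightarrow\infty$, yet you pass silently to ``a subsequence (still denoted $f_m$)''. If the $m_k$ are bounded, one index $k_0$ occurs infinitely often, so a single map $f_{k_0}$ has no limit at $x_0$ along $D_{k_0}$. Condition~2) says nothing about this case. It only concerns $D_m$ with $m\geqslant M(U,x_0)$, and $M(U,x_0)$ may exceed $k_0$ for every small $U$. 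The paper disposes of exactly this case first, via Lemma~\ref{lem1} (continuous extension of one ring $Q$-mapping to a boundary point). That lemma needs $D_{k_0}$ to be locally connected at $x_0$, which is condition~1).

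So your remark that condition~1) ``costs nothing'' is wrong, and without it the lemma is false. Take $n=p=2$, $Q\equiv 1$, $x_0=1/2$, and $A_m=\overline{B(-1/2,1/4)}$ for all $m$. Let $D_1={\Bbb D}\setminus[0,1)$ with $f_1$ a conformal map of $D_1$ onto ${\Bbb D}$, and let $D_m={\Bbb D}$, $f_m={\rm id}$ for $m\geqslant 2$. Conditions~2)--5) hold, but $f_1$ has different limits at $1/2$ from the two banks of the slit, so equicontinuity at $x_0$ fails.

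The repair is short. In the bounded case, if $x_0\in D_{k_0}$, use continuity of $f_{k_0}$. If $x_0\in\partial D_{k_0}$, use local connectedness of $D_{k_0}$ at $x_0$ (condition~1)) instead of condition~2) to join $x_k,x^{\,\prime}_k$ inside $V\cap D_{k_0}\subset B(x_0,\varepsilon)$ for large $k$. Then run your modulus estimate with the index $k_0$ fixed and $\varepsilon\rightarrow 0$, or simply cite Lemma~\ref{lem1}. Only in the unbounded case may you extract an increasing subsequence and proceed as you did.
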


\medskip
\begin{proof}
We will mainly use the methodology that has been used for mappings
of a fixed domain, see, e.g., \cite{SevSkv}. Assume the contrary.
Then there exists $a>0$ such that for $\delta=1/k,$ $k=1,2,\ldots,$
there are $m_k\in {\Bbb N}$ and $x_k, x^{\,\prime}_k\in B(x_0,
1/k)\cap D_{m_k}$ such that $h(f_{m_k}(x_k),
f_{m_k}(x^{\,\prime}_k))\geqslant a.$ We may consider that the
sequence $m_k$ is increasing by $k.$ Otherwise, $m_k=k_0$ for
sufficiently large $k\in{\Bbb N}$ and some $k_0\in {\Bbb N},$
besides that, $x_0\in \overline{D_{k_0}}$ because $f_{k_0}$ is
defined at $x_k\cap D_{k_0}$ and $x_k\rightarrow x_0$ as
$k\rightarrow\infty.$ Now, from the assumption above we have that
$h(f_{k_0}(x_k), f_{k_0}(x^{\,\prime}_k))\geqslant a.$ The latter
contradicts with Lemma~\ref{lem1}. So, $m_k$ may be chosen as an
increasing sequence. Without loss of generality, going to
renumbering, if required, we may consider that $h(f_{m_k}(x_k),
f_{m_k}(x^{\,\prime}_k))\geqslant a$ holds for $m$ instead of $m_k,$
i.e.,
\begin{equation}\label{eq6***}
h(f_m(x_m), f_m(x^{\,\prime}_m))\geqslant a/2\qquad \forall\,\,m\in
{\Bbb N}\,.
\end{equation}
We may assume that $x_0\ne \infty.$ Now, we use the condition 2):
for every neighborhood $U$ of $x_0$ there is a neighborhood $V$ of
$x_0,$ $V\subset U,$ and $M=M(U, x_0)\in {\Bbb N}$ such that $V\cap
D_m$ is connected for every $m\geqslant M(U, x_0).$ Thus, given a
ball $U_1=B(x_0, 2^{\,-1})$ there is $V_1\subset U_1$ and $M(1)\in
{\Bbb N}$ such that $D_m\cap V_1$ is connected for $m\geqslant
M(1).$ Since $x_m, x_m^{\,\prime}\rightarrow x_0$ as $m\rightarrow
\infty,$ there is $m_1\geqslant M(1)$ such that $x_{m_1},
x^{\,\prime}_{m_1}\in V_1\cap D_{m_1}.$ Follow, given a ball
$U_2=B(x_0, 2^{\,-2})$ there is $V_2\subset U_2$ and $M(2)\in {\Bbb
N}$ such that $D_m\cap V_2$ is connected for $m\geqslant M(2).$
Since $x_m, x_m^{\,\prime}\rightarrow x_0$ as $m\rightarrow \infty,$
there is $m_2\geqslant \max\{M(2), m_1\}$ such that $x_{m_2},
x^{\,\prime}_{m_2}\in V_2\cap D_{m_2}.$ Etc. In general, given a
ball $U_k=B(x_0, 2^{\,-k})$ there is $V_k\subset U_k$ and $M(k)\in
{\Bbb N}$ such that $D_m\cap V_k$ is connected for $m\geqslant
M(k).$ Since $x_m, x_m^{\,\prime}\rightarrow x_0$ as $m\rightarrow
\infty,$ there is $m_k\geqslant \max\{M(k), m_{k-1}\}$ such that
$x_{m_k}, x^{\,\prime}_{m_k}\in V_k\cap D_{m_k}.$ Relabeling, if
required, we may consider that the same sequences $x_m$ and
$x^{\prime}_m$ satisfy the above conditions, i.e., given a ball
$U_m=B(x_0, 2^{\,-m}),$ $m=1,2,\ldots,$ there is $V_m\subset U_m$
such that $x_m, x^{\,\prime}_m\in D_m\cap V_m$ while $D_m\cap V_m$
is connected. We join the points $x_m$ and $x^{\,\prime}_m$ by a
path $\gamma_m:[0,1]\rightarrow D_m\cap V_m$ such that
$\gamma_m(0)=x_m,$ $\gamma_m(1)=x^{\,\prime}_m$ and $\gamma_m(t)\in
D_m\cap V_m$ for $t\in (0,1),$ see~Figure~\ref{fig6_1}.
\begin{figure}[h]
\centerline{\includegraphics[scale=0.5]{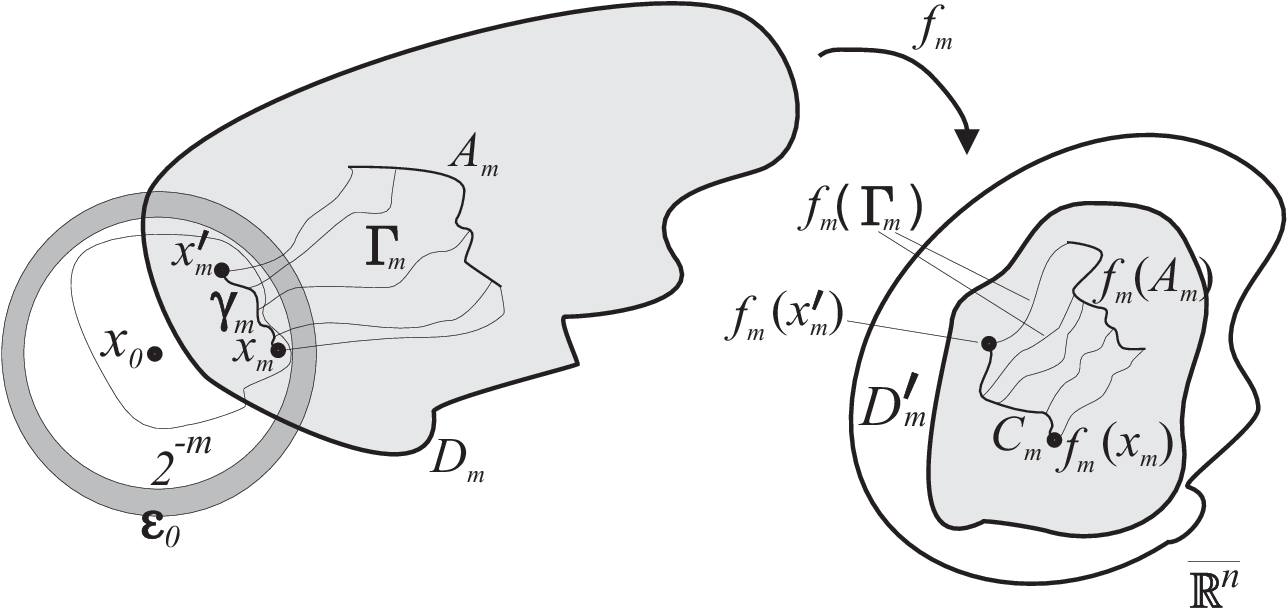}} \caption{To
prove Lemma~\ref{lem1}}\label{fig6_1}
\end{figure}
Denote by $C_m$ the image of the path $\gamma_m(t)$ under $f_m.$ It
follows from relation (\ref{eq6***}) that
\begin{equation}\label{eq5.1}
h(C_m)\geqslant a/2\qquad\forall\, m\in {\Bbb N}\,,
\end{equation}
where $h(C_m)$ denotes the chordal diameter of the set $C_m.$

\medskip
Let $\Gamma_m$ be a family of paths joining $|\gamma_m|$ and $A_m$
in $D_m,$ where $A_m$ is a continuum from the condition~3). By the
definition of the class $\frak{H}^{p}_{Q, \delta}(\frak{D}),$ the
relations~(\ref{eq2*!})--(\ref{eq8B_2}) hold at $x_0.$ Therefore,
\begin{equation}\label{eq10_2}
M_p(f_m(\Gamma_m))\leqslant \int\limits_{A(x_0, 2^{\,-m},
\varepsilon_0)} Q(x)\cdot \eta^p(|x-x_0|)\ dm(x)
\end{equation}
for every Lebesgue measurable function $\eta:(2^{\,-m},
\varepsilon_0)\rightarrow [0,\infty ]$ such that
$\int\limits_{2^{\,-m}}^{\varepsilon_0}\eta(r)\,dr \geqslant 1.$
Observe that, the function
$$\eta(t)=\left\{
\begin{array}{rr}
\psi(t)/I(2^{\,-m}, \varepsilon_0), & t\in (2^{\,-m},
\varepsilon_0),\\
0, & t\in {\Bbb R}\setminus (2^{\,-m}, \varepsilon_0)\,,
\end{array}
\right. $$ where $I(\varepsilon,
\varepsilon_0):=\int\limits_{\varepsilon}^{\varepsilon_0}\psi(t)\,dt,$
satisfies the condition~(\ref{eq8B_2}) with $r_1:=2^{\,-m},$
$r_2:=\varepsilon_0.$ Therefore, it follows from~(\ref{eq3.7.2_2})
and (\ref{eq10_2}) that
\begin{equation}\label{eq11_2}
M_p(f_m(\Gamma_m))\leqslant \alpha(2^{\,-m})\rightarrow 0
\end{equation}
as $m\rightarrow \infty,$ where $\alpha(\varepsilon)$ is some
nonnegative function tending to zero as $\varepsilon\rightarrow 0,$
which exists due to the condition~(\ref{eq3.7.2_2}).

\medskip
On the other hand, observe that $f_m(\Gamma_m)=\Gamma(C_m, f_m(A_m),
D_m^{\,\prime}).$ By the hypothesis of the lemma,
$h(f_m(A_m))\geqslant \delta$ for all $m\in {\Bbb N}.$ Therefore,
due to (\ref{eq5.1}), $h(f_m(A_m))\geqslant \delta_1$ and
$h(C_m)\geqslant\delta_1,$ where $\delta_1:=\min\{\delta, a/2\}.$
Using the fact that the domains $D_m^{\,\prime}:=f_m(D_m)$ are
equi-uniform with respect to $p$-modulus, we conclude that there
exists $\sigma>0$ such that
$$M_p(f_m(\Gamma_m))=M_p(\Gamma(C_m, f_m(A_m),
D_m^{\,\prime}))\geqslant \sigma\qquad\forall\,\, m\in {\Bbb N}\,,$$
which contradicts the condition~(\ref{eq11_2}). The obtained
contradiction indicates that the assumption of the absence of
equicontinuity of the family $\frak{H}^{p}_{Q, \delta}(\frak{D})$ at
$x_0$ was incorrect. The obtained contradiction completes the proof
of the lemma.~$\Box$
\end{proof}

\medskip
The following statement may be found in \cite[Lemmas~1.3,
1.4]{Sev$_1$}.

\medskip
\begin{proposition}\label{pr1}
{\it\, Let $Q:{\Bbb R}^n\rightarrow [0,\infty],$ $n\geqslant 2,$ be
a Lebesgue measurable function and let $x_0\in {\Bbb R}^n.$ Let
$0<p\leqslant n.$ Assume that either of the following conditions
holds

\noindent (a) $Q\in FMO(x_0),$

\noindent (b)
$q_{x_0}(r)\,=\,O\left(\left[\log{\frac1r}\right]^{n-1}\right)$ as
$r\rightarrow 0,$

\noindent (c) for some small $\delta_0=\delta_0(x_0)>0$ we have the
relations
\begin{equation*}\label{eq5***B}
\int\limits_{\delta}^{\delta_0}\frac{dt}{t^{\frac{n-1}{p-1}}
q_{x_0}^{\frac{1}{p-1}}(t)}<\infty,\qquad 0<\delta<\delta_0,\qquad
\int\limits_{0}^{\delta_0}\frac{dt}{t^{\frac{n-1}{p-1}}q_{x_0}^{\frac{1}{p-1}}(t)}=\infty\,.
\end{equation*}
Then  there exist a number $\varepsilon_0\in(0,1)$ and a function
$\psi\geqslant 0$ satisfying~(\ref{eq7***_2}) such that the relation
(\ref{eq3.7.2_2}) holds.}
\end{proposition}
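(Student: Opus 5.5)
Proposition~\ref{pr1} is quoted from \cite[Lemmas~1.3, 1.4]{Sev$_1$}. If I had to prove it myself, I would construct $\psi$ explicitly in each of the three cases and verify (\ref{eq7***_2})--(\ref{eq3.7.2_2}) by integrating in spherical coordinates centred at $x_0$. The identity used throughout is
$$\int\limits_{A(x_0,\varepsilon,\varepsilon_0)}Q(x)\,\psi^p(|x-x_0|)\,dm(x)=\omega_{n-1}\int\limits_{\varepsilon}^{\varepsilon_0}r^{n-1}q_{x_0}(r)\,\psi^p(r)\,dr\,,$$
which follows from Fubini's theorem and the definition of $q_{x_0}$. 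We may take $x_0=0$ after a translation.

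\emph{Case (c).} Here $p>1$ is implicit. I set $\varepsilon_0=\delta_0$ and
$$\psi(t)=\frac{1}{t^{\frac{n-1}{p-1}}\,q_{x_0}^{\frac{1}{p-1}}(t)}\,.$$
The hypotheses of (c) give exactly $I(\varepsilon,\varepsilon_0)<\infty$ and $I(\varepsilon,\varepsilon_0)\to\infty$ as $\varepsilon\to 0$. A direct computation of exponents gives $r^{n-1}q_{x_0}(r)\psi^p(r)=\psi(r)$, so the integral above equals $\omega_{n-1}I(\varepsilon,\varepsilon_0)$. Since $p>1$ and $I\to\infty$, this is $o(I^p)$. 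The only delicate point is the set of radii where $q_{x_0}=0$, on which $\psi=\infty$. There $Q=0$ a.e. on the sphere, so the integrand $Q\psi^p$ still contributes nothing if we use the convention $0\cdot\infty=0$. Alternatively, I would first replace $q_{x_0}$ by $\max\{q_{x_0},\theta\}$ for a small $\theta$ chosen so that the divergence is preserved. I expect this bookkeeping to be the only real obstacle in case (c).

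\emph{Cases (b) and (a).} In both I take $\psi(t)=\frac{1}{t\log\frac1t}$ on $(0,\varepsilon_0)$ with $\varepsilon_0<1$ small. Then $I(\varepsilon,\varepsilon_0)=\log\log\frac1\varepsilon-\log\log\frac1{\varepsilon_0}\to\infty$.

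In case (b), $q_{x_0}(r)\leqslant C\log^{n-1}\frac1r$, so the radial integrand is bounded by $C\,r^{n-1-p}\log^{n-1-p}\frac1r$.
\begin{itemize}
\item For $p<n$ this is integrable near $0$, because $n-1-p>-1$, so the integral stays bounded.
\item For $p=n$ the bound becomes $C/(r\log\frac1r)$, so the integral equals $C\,I(\varepsilon,\varepsilon_0)$.
\end{itemize}
In either situation the integral is $o(I^p)$, since $p>1$.

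In case (a) I would invoke the standard FMO estimate of Ignat'ev--Ryazanov (see \cite{MRSY}):
$$\int\limits_{A(x_0,\varepsilon,\varepsilon_0)}\frac{Q(x)\,dm(x)}{\left(|x-x_0|\log\frac{1}{|x-x_0|}\right)^n}=O\Bigl(\log\log\frac1\varepsilon\Bigr)\,.$$
Since $t\log\frac1t<1$ for small $t$, we have $\psi^p\leqslant\psi^n$ when $p\leqslant n$. Hence the integral in (\ref{eq3.7.2_2}) is $O(I)=o(I^p)$, because $p>1$ (indeed $p>n-1\geqslant 1$ in the applications). The main step here is the FMO lemma itself. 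It is proved by decomposing into dyadic annuli $2^{-k-1}<|x-x_0|<2^{-k}$ and using the bound on mean oscillation to show that the averages of $Q$ over the balls $B(x_0,2^{-k})$ grow at most like $k$. I would cite it rather than reprove it.
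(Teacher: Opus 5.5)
There is a genuine gap in case (a). Your cases (b) and (c) are correct. Your overall route is the standard one behind the lemmas the paper cites; the paper itself gives no proof. That route is an explicit $\psi$ checked in spherical coordinates, with $\psi(t)=1/(t\log\frac1t)$ in (a) and (b), and the extremal $\psi=t^{-\frac{n-1}{p-1}}q_{x_0}^{-\frac{1}{p-1}}$ in (c).

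The proposition is stated for $0<p\leqslant n$, and hypothesis (a) makes sense for every such $p$. You bound $\int Q\psi^p$ by $\int Q\psi^n=O(I(\varepsilon,\varepsilon_0))$, but $O(I)$ is $o(I^p)$ only when $p>1$. So for $0<p\leqslant 1$ your argument does not give (\ref{eq3.7.2_2}). Pointing to the range $p>n-1$ used later in the paper does not prove the statement as written.

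The fix is to keep the factor $(t\log\frac1t)^{n-p}$ when $p<n$ instead of discarding it. The dyadic argument you describe already yields $\int_{B(x_0,r)}Q\,dm\leqslant C r^n\log\frac1r$ for small $r$. Since $\psi$ is decreasing on $(0,e^{-1})$, for the annuli $A_k:=A(x_0,2^{-k-1},2^{-k})$ you get
\[
\int_{A_k}Q(x)\,\psi^p(|x-x_0|)\,dm(x)\leqslant\psi^p(2^{-k-1})\int_{B(x_0,2^{-k})}Q\,dm\leqslant C'\,2^{-k(n-p)}\,k^{1-p}\,,
\]
which is summable for $p<n$. Hence for $p<n$ the left side of (\ref{eq3.7.2_2}) stays bounded as $\varepsilon\to 0$, exactly as in your case (b), and is therefore $o(I^p)$ for every $p>0$. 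Only $p=n\geqslant 2$ needs the Ignat'ev--Ryazanov $O(\log\log\frac1\varepsilon)$ estimate, and there your argument is fine.

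A minor remark on (c). The set of radii where $q_{x_0}=0$ is harmless for a simpler reason: the first condition in (c) forces $q_{x_0}>0$ a.e. on $(0,\delta_0)$, so $\{\psi=\infty\}$ is a null set of radii. You should drop the alternative of replacing $q_{x_0}$ by $\max\{q_{x_0},\theta\}$, because it can destroy the divergence of $\int_0\psi$. This happens, for instance, when the divergence comes from $q_{x_0}$ being very small on sparse sets of radii accumulating at $0$.
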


\medskip
Uniting Lemma~\ref{lem2} and Proposition~\ref{pr1}, we obtain the
following.

\medskip
\begin{theorem}\label{th1}
{\it\, Let $p\in (n-1, n],$ let $x_0\in \overline{\frak{D}}$ and let
$f_m\in \frak{H}^p_{Q, \delta}(\frak{D}),$ $m=1,2,\ldots ,$
$f_m:D_m\rightarrow \overline{{\Bbb R}^n}$ be a sequence such that:

\medskip
1) for every $m\in {\Bbb N},$ $D_m$ is locally connected at
$\partial D_m;$

2) for every neighborhood $U$ of $x_0$ there is a neighborhood $V$
of $x_0,$ $V\subset U,$ and $M=M(U, x_0)\in {\Bbb N}$ such that
$V\cap D_m$ is connected for every $m\geqslant M=M(U, x_0);$

3) there is $\varepsilon_0>0$ such that $A_m:=A_{f_m}\subset {\Bbb
R}^n\setminus B(x_0, \varepsilon_0),$ where $A_{f_m}$ is a continuum
corresponding to the definition of the class $\frak{H}^p_{Q,
\delta}(\frak{D})$ for $f_m;$

4) the family $f_m(D_m)$ is equi-uniform with respect to $p$-modulus
over all $m\in {\Bbb N};$

5)  at least one of two following conditions hold: $Q$ has a finite
mean oscillation in $\overline{D_0},$ or~(\ref{eq2_4}) holds for
every $x_0\in \overline{D_0}$ and some $\beta(x_0)>0.$

\medskip
Then the family $f_m,$ $m=1,2,\ldots,$ is equicontinuous with
respect to $x_0,$ i.e., for any $\varepsilon>0$ there is
$\delta=\delta(x_0, \varepsilon)>0$ such that $h(f_m(x),
f_m(x^{\,\prime}))<\varepsilon$ whenever $x, x^{\,\prime}\in B(x_0,
\delta)\cap D_m$ and $m\in {\Bbb N}.$}
\end{theorem}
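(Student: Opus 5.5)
The plan is to reduce Theorem~\ref{th1} directly to Lemma~\ref{lem2}. Conditions 1)--4) of Theorem~\ref{th1} coincide with conditions 1)--4) of Lemma~\ref{lem2}. So it suffices to check that condition 5) of Theorem~\ref{th1} produces a number $\varepsilon_0>0$ and a function $\psi$ satisfying (\ref{eq7***_2}) and (\ref{eq3.7.2_2}) at the point $x_0$. Once this is done, Lemma~\ref{lem2} gives the required equicontinuity at $x_0$ verbatim.

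For this verification we apply Proposition~\ref{pr1} at the fixed point $x_0$.

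\emph{Case 1: $Q$ has finite mean oscillation in $\overline{D_0}$.} Then $Q\in FMO(x_0)$, and alternative (a) of Proposition~\ref{pr1} applies directly.

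\emph{Case 2: (\ref{eq2_4}) holds at $x_0$ with some $\beta(x_0)>0$.} Here we aim at alternative (c) of Proposition~\ref{pr1}, applied with $Q^{\,\prime}=\max\{Q,1\}$ in place of $Q$.
\begin{itemize}
\item The divergence half of (c) is exactly (\ref{eq2_4}).
\item For the finiteness half, note that $q^{\,\prime}_{x_0}(t)\geqslant 1$. Hence for $0<\delta<\delta_0:=\beta(x_0)$ the integrand is bounded by $t^{-\frac{n-1}{p-1}}$ on $[\delta,\delta_0]$, so the integral over $[\delta,\delta_0]$ is finite. This is where $p>n-1$ is used, since it makes $p-1>0$ and the exponents meaningful.
\item Proposition~\ref{pr1} then supplies $\psi$ and $\varepsilon_0$ with (\ref{eq7***_2}) and (\ref{eq3.7.2_2}) for $Q^{\,\prime}$.
\item Since $Q\leqslant Q^{\,\prime}$, the relation (\ref{eq3.7.2_2}) holds a fortiori for $Q$. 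Equivalently, each $f_m$ is also a ring $Q^{\,\prime}$-mapping, because the right-hand side of (\ref{eq2*!}) only increases.
\end{itemize}

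\emph{The case $x_0=\infty$.} As stipulated in Lemma~\ref{lem2}, we pass to $\widetilde Q(x)=Q(x/|x|^2)$ at the origin and repeat the same verification.

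The only delicate points are the following.
\begin{itemize}
\item The hypotheses in 5) are stated on $\overline{D_0}$, while $x_0\in\overline{\frak{D}}$. I would read 5) as holding at the point $x_0$ under consideration, that is, take $D_0$ to be such that $x_0\in\overline{D_0}$, as happens in the applications.
\item The $\varepsilon_0$ produced by Proposition~\ref{pr1} may differ from the $\varepsilon_0$ in condition 3). This is harmless: take the minimum of the two. Shrinking $\varepsilon_0$ preserves 3), and (\ref{eq3.7.2_2}) is preserved up to a change of $I$ by an additive constant, which is negligible since $I\to\infty$.
\end{itemize}
With these observations, Lemma~\ref{lem2} yields the conclusion.
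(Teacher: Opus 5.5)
Your proposal is correct and follows the paper's route exactly: the paper obtains Theorem~\ref{th1} in one line, by ``uniting Lemma~\ref{lem2} and Proposition~\ref{pr1}''. Your extra details are the checks the paper leaves implicit, and they are sound. These are applying alternative (c) of Proposition~\ref{pr1} to $Q^{\,\prime}=\max\{Q,1\}$, using $q^{\,\prime}_{x_0}\geqslant 1$ for the finiteness half and $Q\leqslant Q^{\,\prime}$ to return to $Q$, and reconciling the two values of $\varepsilon_0$.
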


\section{On image of the continuum under open discrete and closed mappings}

The following statement holds, see, e.g.,
\cite[Theorem~1.I.5.46]{Ku}).

\medskip
\begin{proposition}\label{pr2}
{\it\, Let $A$ be a set in a topological space $X.$ If the set $C$
is connected and $C\cap A\ne \varnothing\ne C\setminus A,$ then
$C\cap
\partial A\ne\varnothing.$}
\end{proposition}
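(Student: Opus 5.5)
Argue by contradiction, splitting the topological space $X$ into the three pairwise disjoint sets ${\rm Int\,}A$, $\partial A$ and ${\rm Int\,}(X\setminus A)$. Here $\partial A=\overline{A}\setminus {\rm Int\,}A$. These three sets cover $X$: a point outside ${\rm Int\,}A$ and outside $\partial A$ is not in $\overline{A}$, so it lies in the open set $X\setminus\overline{A}={\rm Int\,}(X\setminus A)$.

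Suppose that $C\cap\partial A=\varnothing$. Then
$$C=(C\cap {\rm Int\,}A)\cup (C\cap (X\setminus \overline{A})).$$
Both pieces are relatively open in $C$ and they are disjoint. We check that both are nonempty.
\begin{itemize}
\item Since $C\cap A\ne\varnothing$, take $a\in C\cap A$. Then $a\notin\partial A$ and $a\in A\subset\overline{A}$, so $a\in{\rm Int\,}A$. Hence $C\cap{\rm Int\,}A\ne\varnothing$.
\item Since $C\setminus A\ne\varnothing$, take $b\in C\setminus A$. Then $b\notin {\rm Int\,}A\subset A$ and $b\notin\partial A$, so $b\notin\overline{A}$. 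Hence $C\cap(X\setminus\overline{A})\ne\varnothing$.
\end{itemize}
This gives a separation of $C$ into two disjoint nonempty relatively open subsets, contradicting the connectedness of $C$. Therefore $C\cap\partial A\ne\varnothing$.

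The only delicate point is the membership bookkeeping for the points $a$ and $b$, which uses the convention $\partial A=\overline{A}\setminus{\rm Int\,}A$. No result from earlier in the paper is needed.
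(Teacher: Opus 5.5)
Your proof is correct: under the assumption $C\cap\partial A=\varnothing$, the sets $C\cap{\rm Int\,}A$ and $C\cap(X\setminus\overline{A})$ form a genuine separation of $C$, and your membership checks for $a$ and $b$ are right. The paper gives no proof of its own and only cites Kuratowski, whose standard argument is essentially yours: there $C\cap A$ and $C\setminus A$ are shown to be separated, and under your contradiction hypothesis these are exactly your two pieces.
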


\medskip
Let $D$ be a domain in ${\Bbb R}^n,$ $n\geqslant 2,$ and let $f:D
\rightarrow {\Bbb R}^n$ (or $f:D\rightarrow \overline{{\Bbb R}^n})$
be a discrete open mapping, $\beta: [a,\,b)\rightarrow {\Bbb R}^n$
be a path, and $x\in\,f^{-1}(\beta(a)).$ A path $\alpha:
[a,\,b)\rightarrow D$ is called a {\it total $f$-lifting} of $\beta$
starting at $x,$ if $(1)\quad \alpha(a)=x\,;$ $(2)\quad
(f\circ\alpha)(t)=\beta(t)$ for any $t\in [a, b).$

\medskip
\begin{proposition}\label{pr4_a}
{\it\, Let $f:D \rightarrow {\Bbb R}^n$ be a discrete open and
closed (boundary preserving) mapping, $\beta: [a,\,b)\rightarrow
f(D)$ be a path, and $x\in\,f^{\,-1}\left(\beta(a)\right).$ Then
$\beta$ has a total $f$-lif\-ting $\alpha:[a, b)\rightarrow D$
starting at $x$ (see, e.g., \cite[Lemma~3.7]{Vu}). Moreover, if
$\beta(t)\rightarrow \partial f(D)$ as $t\rightarrow b,$ then
$\alpha(t)\rightarrow \partial D$ as $t\rightarrow b$ (see, e.g.,
Lemma~3.12 in \cite{MRV}).}
\end{proposition}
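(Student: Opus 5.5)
We prove the two assertions of Proposition~\ref{pr4_a} separately; both use only that $f$ is open, discrete and closed, with closedness doing the essential work.

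\emph{Existence of a total lifting.} We start from the local theory of discrete open mappings. Every $x\in D$ has arbitrarily small normal neighborhoods $U$: $U$ is a component of $f^{-1}(f(U))$, $\overline{U}$ is compact in $D$, and $f(\partial U)=\partial f(U)$. Hence every path in $f(U)$ starting at $f(x)$ has a maximal $f$-lifting in $U$ starting at $x$ (Rickman's path-lifting theorem). Globally, this gives a maximal $f$-lifting $\alpha:[a,c)\rightarrow D$ of $\beta$ starting at $x$, with $c\leqslant b$, and we must show $c=b$.

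Suppose $c<b$. Then $\alpha(t)$ leaves every compact subset of $D$ as $t\rightarrow c$; otherwise, by discreteness and maximality, $\alpha$ could be extended past $c$ using a normal neighborhood of a limit point. Choose $t_k\rightarrow c$ with $\alpha(t_k)\rightarrow\partial D$. The set $F=\{\alpha(t_k)\}$ has no accumulation point in $D$, so $F$ is closed in $D$. Since $f$ is closed, $f(F)=\{\beta(t_k)\}$ is closed in $f(D)$. But $\beta(t_k)\rightarrow\beta(c)\in f(D)$, so $\beta(c)=\beta(t_k)$ for some $k$. By discreteness we may choose the $t_k$ so that $\beta(t_k)$ are pairwise distinct unless $\beta$ is eventually constant. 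If $\beta$ is eventually constant, then $\alpha$ is eventually contained in the discrete fibre, hence is constant there, which contradicts escape to the boundary. This proves $c=b$.

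\emph{Boundary behaviour.} Assume $\beta(t)\rightarrow\partial f(D)$ while $\alpha(t)\not\rightarrow\partial D$. Then there are a compact set $K\subset D$ and $t_k\rightarrow b$ with $\alpha(t_k)\in K$. Passing to a subsequence, $\alpha(t_k)\rightarrow z\in K$, so $\beta(t_k)=f(\alpha(t_k))\rightarrow f(z)\in f(D)$. Since $f(D)$ is open, $f(z)$ lies at positive distance from $\partial f(D)$, which is a contradiction.

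\textbf{Main obstacle.} The main difficulty is the extension step $c=b$, namely the interplay between maximality of the lifting, the normal-neighborhood structure, and the use of closedness to rule out escape to $\partial D$ in finite time. For this step we simply cite \cite[Lemma~3.7]{Vu} and \cite[Lemma~3.12]{MRV}.
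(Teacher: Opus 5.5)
Your proof is correct. The paper gives no argument of its own here. It only cites Lemma~3.7 of Vuorinen and Lemma~3.12 of Martio--Rickman--V\"ais\"al\"a, and what you wrote is the standard argument behind those citations. A maximal lifting $\alpha:[a,c)\to D$ with $c<b$ must satisfy $\alpha(t)\to\partial D$ as $t\to c$, and closedness of $f$ forbids this because $\beta(c)\in f(D)$. Rather than quoting the boundary-preserving property $C(f,\partial D)\subset\partial f(D)$ of closed discrete open maps (which the paper uses elsewhere, citing Vuorinen's Theorem~3.3), you apply the definition of closedness directly to the set $F=\{\alpha(t_k)\}$. This is the same mechanism made explicit.

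Some small remarks:
\begin{itemize}
\item In the non-constant case it is cleaner to pick $t_k\to c$ with $\beta(t_k)\neq\beta(c)$ from the outset. This is possible exactly when $\beta$ is not constant on any interval $(c-\varepsilon,c)$, and then $\beta(c)\in f(F)$ is an immediate contradiction. As written, with ``pairwise distinct'' values, you still have to discard the one index with $\beta(t_k)=\beta(c)$, and the final contradiction is left implicit. This step also has nothing to do with discreteness of $f$.
\item Your argument for the second assertion rightly shows that it needs only continuity of $f$ and openness of $f(D)$. So closedness enters only in the existence part.
\item Your closing remark that you ``simply cite'' the references for $c=b$ undersells what you did. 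Modulo the existence of maximal liftings and local lifting in normal neighborhoods, your argument for $c=b$ is complete.
\end{itemize}
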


The following statement holds, see e.g. \cite[Theorem~7.2]{MRSY}.

\medskip
\begin{lemma}\label{lem5} {\bf(V\"{a}is\"{a}l\"{a}'s lemma
on the weak flatness of inner points).}
{\sl\, Let $n\geqslant 2 $, let $D$ be a domain in $\overline{{\Bbb
R}^n},$ and let $x_0\in D.$ Then for each $P>0$ and each
neighborhood $U$ of point $x_0$ there is a neighborhood $V\subset U$
of the same point such that $M(\Gamma(E, F, D))> P$ for any continua
$E, F \subset D $ intersecting $\partial U$ and $\partial V.$}
\end{lemma}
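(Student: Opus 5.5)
We plan to prove the statement by an explicit lower bound for the modulus of path families in a small spherical annulus around $x_0$, using the monotonicity of the modulus. First, we may assume that $x_0\ne\infty$. Indeed, if $x_0=\infty$ we compose with the inversion $x\mapsto x/|x|^2$, which is a Möbius transformation and preserves $M=M_n$, continua, and the relation ``intersecting $\partial U$ and $\partial V$''. So let $x_0\in D\subset{\Bbb R}^n$. We choose $r_0>0$ such that $\overline{B(x_0,r_0)}\subset U\cap D$, and we look for $V$ of the form $V=B(x_0,r)$ with $0<r<r_0$, to be fixed at the end depending only on $P$ and $r_0$.

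The key topological step is as follows. Let $E$ be a continuum in $D$ with $E\cap\partial U\ne\varnothing\ne E\cap\partial V$. Since $\partial U$ lies outside $B(x_0,r_0)$ and $\partial V=S(x_0,r)$, the connected set $E$ meets both $B(x_0,t)$ and its complement for every $t\in(r,r_0)$. By Proposition~\ref{pr2}, $E\cap S(x_0,t)\ne\varnothing$ for every such $t$. The same holds for $F$.

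The analytic step is a spherical estimate. Let $\rho\in{\rm adm}\,\Gamma(E,F,D)$. For every $t\in(r,r_0)$ we pick $a_t\in E\cap S(x_0,t)$ and $b_t\in F\cap S(x_0,t)$. Every arc of $S(x_0,t)$ joining $a_t$ and $b_t$ lies in $\overline{B(x_0,r_0)}\subset D$, so (after trimming to a subarc from $E$ to $F$) it contains a path of $\Gamma(E,F,D)$, and hence $\int\rho\,ds\geqslant 1$ along every such arc. By the standard lemma on spheres (V\"ais\"al\"a, the estimate used in proving that the modulus of the family joining two continua that both meet every sphere $S(x_0,t)$ is at least $c_n\log(r_0/r)$; cf.\ Theorem~10.12 in~\cite{Va}), this gives
$$\int\limits_{S(x_0,t)}\rho^n\,dS\geqslant \frac{b_n}{t}$$
with a constant $b_n>0$ depending only on $n$. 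Integrating in $t$ over $(r,r_0)$ yields
$$\int\limits_{{\Bbb R}^n}\rho^n\,dm\geqslant b_n\log\frac{r_0}{r}.$$
Taking the infimum over admissible $\rho$ gives $M(\Gamma(E,F,D))\geqslant b_n\log(r_0/r)$. We therefore choose $r$ so small that $b_n\log(r_0/r)>P$, and this $V=B(x_0,r)$ works for all such $E,F$.

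The main obstacle is the spherical inequality $\int_{S(x_0,t)}\rho^n\,dS\geqslant b_n/t$ for every $\rho$ whose integral along all arcs joining two given points of the sphere is at least $1$. Its proof uses a family of circular arcs through $a_t$ and $b_t$ sweeping a substantial part of the sphere, together with H\"older's inequality. I intend to quote it from~\cite{Va} (or, equivalently, to invoke directly the known lower bound for the modulus of paths joining two continua that both meet every sphere $S(x_0,t)$, $r<t<r_0$) rather than reprove it. The remaining steps are routine: monotonicity of the modulus under passing to subfamilies, and Proposition~\ref{pr2}.
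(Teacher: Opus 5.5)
Your proposal is correct and takes the same route the paper relies on: the paper gives no argument of its own but defers to V\"ais\"al\"a's spherical estimate (item (10.11) of his lectures) and to the formal proof in Sevost'yanov--Skvortsov (Lemma~2.2). That proof likewise takes $V=B(x_0,r)$ small, uses Proposition~\ref{pr2} to see that $E$ and $F$ meet every sphere $S(x_0,t)$ with $r<t<r_0$, and concludes $M(\Gamma(E,F,D))\geqslant c_n\log(r_0/r)>P$ (the case $E\cap F\ne\varnothing$, which you leave implicit, is trivial since then $M(\Gamma(E,F,D))=\infty$).
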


\medskip
The proof of Lemma~\ref{lem5} is essentially given by
V\"{a}is\"{a}l\"{a} in~\cite[(10.11)]{Va}, however, we have also
given a formal proof, see \cite[Lemma~2.2]{SevSkv}.~$\Box$

\medskip
\begin{lemma}\label{lem4}{\it\, Let
$D_0\subset {\Bbb R}^n,$ be a domain in ${\Bbb R}^n,$ $n\geqslant
2,$ and let $\frak{D}=\{D_m\}_{m=1}^{\infty}$ be a sequence of
domains the kernel of which is $D_0.$ Let $A$ be a non-degenerate
continuum in $D_0,$ $A\subset\bigcap\limits_{m=1}^{\infty}D_m,$ and
let $Q:{\Bbb R}^n\rightarrow[0, \infty]$ be a Lebesgue measurable
function. In addition, let $f_m:D_m\rightarrow {\Bbb R}^n$ be a
sequence of open discrete and closed mappings satisfying
relations~(\ref{eq2*!})--(\ref{eq8B_2}) at any point
$x_0\in\overline{D_0}$ for all $0<r_1<r_2<d(x_0,
\partial D_m)$ such that $h(f_m(A))\geqslant \delta>0$ for any
$m=1,2,\ldots$ and $p=n$ for some $\delta>0.$ Assume that, $Q$ has a
finite mean oscillation in $A,$ or the relation~(\ref{eq2_4}) holds
for every $x_0\in A$ and some $\beta(x_0)>0.$ If there is $r>0$ such
that $h(E)\geqslant r$ whenever $E$ is any component of $\partial
f_m(D_m),$ then there exists $\delta_1>0$ such that $h(f_m(A),
\partial f_m(D_m))\geqslant \delta_1$ for any $m=1,2,\ldots .$}
\end{lemma}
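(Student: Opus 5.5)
Assume the contrary: there are $m_k$ with $h(f_{m_k}(A),\partial f_{m_k}(D_{m_k}))\to 0$. Relabel to $f_m$, and choose $x_m\in A$ and $y_m\in\partial f_m(D_m)$ with $h(f_m(x_m),y_m)\to 0$. Since $A$ is compact and $\overline{{\Bbb R}^n}$ is compact, we may pass to a subsequence with $x_m\to x_0\in A$ and $y_m\to y_0$, so also $f_m(x_m)\to y_0$. Let $E_m$ be the component of $\partial f_m(D_m)$ containing $y_m$; by hypothesis $h(E_m)\geqslant r$. The aim is to produce a family of paths in $D_m$ starting near $x_0$, whose images have modulus bounded below, while the ring $Q$-inequality at $x_0$ forces this modulus to be small.

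\textbf{Construction of the image continuum.} Because $A$ is a continuum with $h(f_m(A))\geqslant\delta$, the set $f_m(A)$ is a continuum through $f_m(x_m)$ of chordal diameter at least $\delta$. For $n\geqslant 2$ I would use a standard lower bound for the modulus of paths joining two continua. If $F_1,F_2\subset\overline{{\Bbb R}^n}$ are continua with $h(F_i)\geqslant c$ and $h(F_1,F_2)\to 0$, then $M(\Gamma(F_1,F_2,\overline{{\Bbb R}^n}))\to\infty$; this follows from V\"ais\"al\"a's lower estimate via a spherical ring (cf.~\cite[Sect.~10]{Va}), or equivalently from Lemma~\ref{lem5} applied in $\overline{{\Bbb R}^n}$ near $y_0$. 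Apply this to $F_1=f_m(A)$ and $F_2=E_m$, both of diameter at least $\min\{\delta,r\}$, whose distance tends to $0$. Since $E_m\cap f_m(D_m)=\varnothing$, every path of $\Gamma(f_m(A),E_m,\overline{{\Bbb R}^n})$ has a subpath that starts in $f_m(A)$, stays in $f_m(D_m)$ and tends to $\partial f_m(D_m)$; it leaves $f_m(D_m)$ first at some boundary point (Proposition~\ref{pr2}). Call this subfamily $\Gamma_m^{\prime}$. It is minorized, so $M(\Gamma_m^{\prime})\to\infty$. I expect this step to be the main obstacle: one must check carefully that a large modulus between $f_m(A)$ and $E_m$ really transfers to paths inside $f_m(D_m)$ ending on $\partial f_m(D_m)$, and not merely to paths crossing the complement. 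The minorization argument above should handle this.

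\textbf{Lifting and upper bound.} By Proposition~\ref{pr4_a}, each $\beta\in\Gamma_m^{\prime}$ has a total $f_m$-lifting $\alpha$ starting at a point of $A$, with $\alpha(t)\to\partial D_m$. Since $A\subset D_0$ is compact, there is $\varepsilon_0>0$ with $B(A,2\varepsilon_0)$ lying in all $D_m$ for $m$ large; this follows from the kernel property and $A\subset\bigcap D_m$. Hence each $\alpha$ leaves $B(a,\varepsilon_0)$ for its start point $a\in A$. The trouble is that the start points vary over $A$. I would cover $A$ by finitely many balls $B(z_i,\varepsilon)$, $z_i\in A$, so that each $\alpha$ starting in $B(z_i,\varepsilon)$ belongs to $\Gamma(S(z_i,\varepsilon),S(z_i,\varepsilon_0),D_m)$. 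The radius is chosen so that $0<\varepsilon<\varepsilon_0<d(z_i,\partial D_m)$, matching the hypothesis's range of the ring inequality. Subadditivity of modulus, together with $\Gamma_m^{\prime}>f_m(\cdot)$, then gives
$$M(\Gamma_m^{\prime})\leqslant\sum_i\int_{A(z_i,\varepsilon,\varepsilon_0)}Q\,\eta^n(|x-z_i|)\,dm(x).$$

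\textbf{Closing the argument.} By Proposition~\ref{pr1}, using the FMO condition or (\ref{eq2_4}) at each $z_i$, with $\eta=\psi/I$, each term is bounded by $\alpha(\varepsilon)$ uniformly in $m$, and $\alpha(\varepsilon)$ is finite. This yields a bound on $M(\Gamma_m^{\prime})$ independent of $m$, contradicting $M(\Gamma_m^{\prime})\to\infty$. The cover and $\varepsilon$ are fixed once, so the finite sum is not an issue, since only boundedness is needed, not smallness.
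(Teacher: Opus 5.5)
Your proposal is correct and follows essentially the same route as the paper. Both arguments proceed by contradiction via nearest points and the component $E_m$, apply V\"ais\"al\"a's Lemma~\ref{lem5} in $\overline{{\Bbb R}^n}$ at $y_0$, truncate paths at the first exit from $f_m(D_m)$, take total liftings via Proposition~\ref{pr4_a}, cover $A$ by finitely many balls lying compactly in all $D_m$, and bound a finite sum of ring estimates independently of $m$. Your first-exit truncation is in fact more careful than the paper's $t_m=\sup$.

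The one point to adjust is that the inner radii of your cover must depend on the centre, $\varepsilon_i=\varepsilon(z_i)$, chosen so that $I>0$ and $\int_{A(z_i,\varepsilon_i,\varepsilon_0)}Q\,\eta^n<\infty$, followed by Heine--Borel, as the paper does. Proposition~\ref{pr1} gives only pointwise smallness, so fixing a single $\varepsilon$ before choosing the net $z_i$ would be circular.
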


\medskip
\begin{proof}
Observe that, $h(A, \partial D_m)\geqslant r_*>0$ for some $r_*>0,$
and all $m\in {\Bbb N}$ except finite numbers. Otherwise, there are
sequences $x_m\in A$ and $y_m\in \partial D_m$ with $h(x_m,
y_m)\rightarrow 0$ as $m\rightarrow\infty.$ We may consider that
$x_m, y_m\rightarrow x_0$ as $m\rightarrow\infty.$ Obviously,
$x_0\in A$ by the closeness of $A.$ Now, there is $\varepsilon_0>0$
such that $\overline{B(x_0, \varepsilon_0)}\subset D_0$ and
$\overline{B(x_0, \varepsilon_0)}$ belongs to all $D_m$ except of
finite numbers. However, the latter contradicts the condition
$y_m\rightarrow x_0$ as $m\rightarrow\infty$ while $y_m\in
\partial D_m,$ because in this case $\overline{B(x_0,
\varepsilon_0)}\cap \partial D_m\ne\varnothing$ for infinitely many
$m\in {\Bbb N}.$

\medskip
We firstly consider the case when the relation~(\ref{eq2_4}) holds.
Since for any
$0<\varepsilon_1<\widetilde{\varepsilon_1}<\min\{\beta(x_0),
\frac{1}{2}r_*\}$ we have that
$$\int\limits_{\varepsilon_1}^{\widetilde{\varepsilon_1}}\frac{dt}{t
q_{x_0}^{\,\prime\,\frac{1}{n-1}}(t)}\leqslant
\int\limits_{\varepsilon_1}^{\widetilde{\varepsilon_1}}\frac{dt}{t}<\infty\,,
$$
the relation~(\ref{eq2_4}) implies that, given
$0<\widetilde{\varepsilon_1}<r_*\leqslant h(A, \partial D_m)$ we may
find $\varepsilon_1=\varepsilon_1(x_0)$ such that
$\int\limits_{\varepsilon_1}^{\widetilde{\varepsilon_1}}\frac{dt}{t
q_{x_0}^{\,\prime\,\frac{1}{n-1}}(t)}>1\,.$
We cover $A$ with the balls $B(x_0, \varepsilon_1),$ $x_0\in A,$
where $\varepsilon_1$ as above. By Heine-Borel-Lebesgue lemma we may
find a finite subcover $B(x_1, \varepsilon_1),$ $B(x_2,
\varepsilon_2),\ldots ,B(x_N, \varepsilon_N)$ such that
$ A\subset \bigcup\limits_{i=1}^nB(x_i, \varepsilon_i)\,,$ $A\subset
\bigcup\limits_{i=1}^nB(x_i, 2\varepsilon_i)\,.$
By the choice of $\varepsilon_i>0,$ $\varepsilon_i<\min\{\beta(x_0),
\frac{1}{2}r_*\},$ we have that $B(x_i, 2\varepsilon_i)\subset D_m.$
Since $D_m$ is a domain in ${\Bbb R}^n,$ $m=1,2,\ldots ,$ $\partial
D_m\ne\varnothing.$ In addition, since $C(\partial D_m, f_m)\subset
\partial f_m(D_m)$ whenever $f_m$ is open, discrete and closed (see
\cite[Theorem~3.3]{Vu}), $\partial f_m(D_m)\ne \varnothing,$ as
well. Thus, the quantity $h(f_m(A),
\partial f_m(D_m))$ is well-defined, so the formulation of the lemma is
correct.

\medskip
Let us prove Lemma~\ref{lem4} by the contradiction, partially using
the approach of the proof of Lemma~4.1 in \cite{SevSkv}. Suppose
that the conclusion of the lemma is not true. Then for each $k\in
{\Bbb N}$ there is some number $m=m_k$ such that $h(f_{m_k}(A),
\partial f_{m_k}(D_{m_k}))<1/k.$ In order not to complicate the notation,
we will further assume that $h(f_{m}(A),
\partial f_m(D_m))<1/m,$ $m=1,2,\ldots. $ Note that
the set $f_{m}(A)$ is compact as a continuous image of a compact set
$A\subset D_m$ under the mapping~$f_{m}.$ In this case, there are
elements $x_m\in f_{m}(A)$ and $y_m\in \partial f_m(D_m)$ such that
$h(f_{m}(A),
\partial f_m(D_m))=h(x_m, y_m)<1/m.$ Let $E_m$ be a component of $\partial
f_m(D_m)$ containing $y_m.$ Due to the compactness of
$\overline{{\Bbb R}^n},$ we may assume that $y_m\rightarrow y_0$ as
$m\rightarrow \infty;$ then also $x_m\rightarrow y_0$ as
$m\rightarrow \infty.$

\medskip
Put $P>0$ and $U=B_h(y_0, r_0)=\{y\in \overline{{\Bbb R}^n}: h(y,
y_0)<r_0\},$ where $2r_0:=\min\{r/2, \delta/2\},$ where $r$ and
$\delta$ are numbers from the condition of the lemma. Observe that
$E_m\cap U\ne\varnothing\ne E_m\setminus U$ for sufficiently large
$m\in{\Bbb N},$ since $y_m\rightarrow y_0$ as $m\rightarrow \infty,$
$y_m\in E_m;$ besides that, $h(E_m)\geqslant r>r/2\geqslant 2r_0$
and $h(U)\leqslant 2r_0.$ Observe that, $E_m$ is closed (see
\cite[Theorem~1.III.5.46]{Ku}). Now, $E_m$ is a continuum, so that
$E_m\cap
\partial U\ne\varnothing$ by Proposition~\ref{pr2}. Similarly,
$f_{m}(A)\cap U\ne\varnothing\ne f_{m}(A)\setminus U$ for
sufficiently large $m\in{\Bbb N},$ since $x_m\rightarrow y_0$ as
$m\rightarrow \infty,$ $x_m\in f_{m}(A);$ besides that
$h(f_{m}(A))\geqslant \delta>\delta/2\geqslant 2r_0$ and
$h(U)\leqslant 2r_0.$ Since $f_m(A)$ is a continuum, $f_m(A)\cap
\partial U\ne\varnothing$ by Proposition~\ref{pr2}.
By the proving above, $E_m\cap
\partial U\ne\varnothing\ne f_m(A)\cap
\partial U$
for sufficiently large $m\in {\Bbb N}.$ By Lemma~\ref{lem5} there is
$V\subset U,$ $V$ is a neighborhood of $y_0,$ such that
\begin{equation}\label{eq9}
M(\Gamma(E, F, \overline{{\Bbb R}^n}))>P
\end{equation}
for any continua $E, F\subset \overline{{\Bbb R}^n}$ with $E\cap
\partial U\ne\varnothing\ne E\cap \partial V$ and $F\cap \partial
U\ne\varnothing\ne F\cap \partial V.$
Arguing similarly to above, we may prove that
$
E_m\cap
\partial V\ne\varnothing\ne f_m(A)\cap
\partial V
$
for sufficiently large $m\in {\Bbb N}.$ Thus, by~(\ref{eq9})
\begin{equation}\label{eq9B}
M(\Gamma(f_m(A), E_m,\overline{{\Bbb R}^n}))>P
\end{equation}
for sufficiently large $m=1,2,\ldots .$ We now prove that the
relation~(\ref{eq9B}) contradicts the definition of $f_m$
in~(\ref{eq2*!})--(\ref{eq8B_2}).

\medskip
Indeed, let $\gamma:[0, 1]\rightarrow \overline{{\Bbb R}^n}$ be a
path in $\Gamma(f_m(A), E_m, \overline{{\Bbb R}^n}),$ i.e.,
$\gamma(0)\in f_m(A),$ $\gamma(1)\in E_m$ and $\gamma(t)\in
\overline{{\Bbb R}^n}$ for $t\in (0, 1).$ Let
$t_m=\sup\limits_{\gamma(t)\in f_m(D_m)}t$ and let
$\alpha_m(t)=\gamma|_{[0, t_m)}.$ Let $\Gamma_m$ consists of all
such paths $\alpha_m,$ now $\Gamma(f_m(A), E_m, \overline{{\Bbb
R}^n})>\Gamma_m$ and by the minorization principle of the modulus
(see~\cite[Theorem~1]{Fu})
\begin{equation}\label{eq12}
M(\Gamma_m)\geqslant M(\Gamma(f_m(A), E_m, \overline{{\Bbb
R}^n}))\,.
\end{equation}
Let $\Delta_m$ be a family of all total $f_m$-liftings of $\Gamma_m$
starting at $A$ (it exists by Proposition~\ref{pr4_a}). Now
\begin{equation}\label{eq13}
M(f_m(\Delta_m))=M(\Gamma_m)\,.
\end{equation}
Let $\beta_m:[0, t_m)\rightarrow D_m,$ $\beta_m\in \Delta_m,$
$f_m\circ\beta_m=\alpha_m.$ By Proposition~\ref{pr4_a}
$\beta_m(t)\rightarrow
\partial D_m$ as $t\rightarrow t_m.$

\medskip
We now show that
\begin{equation}\label{eq7C}
\Delta_{m}>\bigcup\limits_{i=1}^N\Gamma(S(x_i, \varepsilon_i),
S(x_i, 2\varepsilon_i), A(x_i, \varepsilon_i, 2\varepsilon_i))\,.
\end{equation}
Indeed, let $\gamma\in \Delta_{m},$ in other words, $\gamma:[0,
t_m)\rightarrow D_m,$ $\gamma(0)\in A$ and $\gamma(t)\rightarrow
\partial D_m$ as $t\rightarrow t_m-0.$ Now, there is $1\leqslant i\leqslant
N$ such that $|\gamma|\cap B(x_i, \varepsilon_i)\ne\varnothing\ne
|\gamma|\cap (D_m\setminus B(x_i, \varepsilon_i)).$ Therefore, by
Proposition~\ref{pr2} there is $0<t^{\,*}_1<t_m$ such that
$\gamma(t^{\,*}_1)\in S(x_i, \varepsilon_i).$ We may assume that
$\gamma(t)\not\in B(x_i, \varepsilon_i)$ for $t>t^{\,*}_1.$ Put
$\gamma_1:=\gamma|_{[t^{\,*}_1, t_m]}.$ Similarly, $|\gamma_1|\cap
B(x_i, 2\varepsilon_i)\ne\varnothing\ne |\gamma_1|\cap (D_m\setminus
B(x_i, 2\varepsilon_i)).$ By Proposition~\ref{pr2} there is
$t^{\,*}_1<t^{\,*}_2<t_m$ with $\gamma(t^{\,*}_2)\in S(x_i,
2\varepsilon_i).$ We may assume that $\gamma(t)\in B(x_i,
2\varepsilon_i)$ for $t<t^{\,*}_2.$ Put
$\gamma_2:=\gamma|_{[t^{\,*}_1, t^{\,*}_2]}.$ Then, the path
$\gamma_2$ is a subpath of $\gamma,$ which belongs to the family
$\Gamma(S(x_i, \varepsilon_i), S(x_i, 2\varepsilon_i), A(x_i,
\varepsilon_i, 2\varepsilon_i)).$ Thus, the relation~(\ref{eq7C}) is
established.

\medskip
On the other hand, we set
$$\psi_i(t)= \left \{\begin{array}{rr}
1/[tq_{x_i}^{\,\prime\frac{1}{n-1}}(t)]\ , & \ t\in (\varepsilon_i,
2\varepsilon_i)\ ,
\\ 0\ ,  &  \ t\notin (\varepsilon_i,
2\varepsilon_i)\ ,
\end{array} \right. $$
where $q^{\,\prime}_{x_i}(r):=\frac{1}{\omega_{n-1}r^{n-1}}
\int\limits_{|x-x_i|=r}Q^{\,\prime}(x)\,dS,$
$Q^{\,\prime}(x)=\max\{Q(x), 1\}.$
Now,
\begin{equation}\label{eq3}
\int\limits_{A(x_i, \varepsilon_i, 2\varepsilon_i)}
Q^{\,\prime}(x)\cdot\psi^n(|x-x_i|)\,dm(x)= \omega_{n-1} I_i\,,
\end{equation}
where $I_i=\int\limits_{\varepsilon_i}^{2\varepsilon_i}
\frac{dt}{tq_{x_i}^{^{\,\prime}\frac{1}{n-1}}(t)}.$
Note that, the function $\eta_i(t)= \psi(t)/I_i,$ $t\in
(\varepsilon_i, 2\varepsilon_i),$ satisfies~(\ref{eq8B_2}), because
$\int\limits_{\varepsilon_i}^{2\varepsilon_i}\eta_i(t)\,dt=1.$ Now,
it follows from~(\ref{eq3}) that
\begin{equation}\label{eq8C}
M(f_{m}(\Gamma(S(x_i, \varepsilon_i), S(x_i, 2\varepsilon_i), A(x_i,
\varepsilon_i, 2\varepsilon_i)))\leqslant
\frac{\omega_{n-1}}{I_i^{n-1}}<\infty\,,
\end{equation}
where $I_i=I_i(x_i, \varepsilon_i,
2\varepsilon_i)=\int\limits_{\varepsilon_i}^{2\varepsilon_i}
\frac{dr}{rq_{x_i}^{\,\prime\frac{1}{n-1}}(r)}$ and $I_i>0$ by the
construction.

\medskip
Finally, by~(\ref{eq12}), (\ref{eq13}), (\ref{eq7C}) and
(\ref{eq8C}) we obtain that
\begin{gather}M(\Gamma(f_m(A), E_m,\overline{{\Bbb
R}^n}))\leqslant M(\Gamma_m)\nonumber\\
\label{eq16} = M(f_m(\Delta_m))\leqslant
\sum\limits_{i=1}^NM(f_m(\Gamma(S(x_i, \varepsilon_i), S(x_i,
2\varepsilon_i), A(x_i,
\varepsilon_i, 2\varepsilon_i))))\\
\nonumber\leqslant\sum\limits_{i=1}^N\frac{\omega_{n-1}}{I_i^{n-1}}:=C<\infty\,.
\end{gather}
Since $P$ in~(\ref{eq9B}) may be done arbitrary large, the
relations~(\ref{eq9B}) and~(\ref{eq16}) contradict each other. This
completes the proof for the case when the relation~(\ref{eq2_4})
holds. Observe that, the case when $Q$ has a finite mean oscillation
in $A$ is a particular case of the relation~(\ref{eq2_4}) (see the
last part of the proof of Theorem~5.5 in \cite{Sev$_1$}). The proof
is wholly complete.~$\Box$
\end{proof}

\begin{corollary}\label{cor1}
{\it\, Assume that, under conditions of Lemma~\ref{lem4}, $A_0$ is
some another compactum in $D_0.$ Then there is $\delta_2>0$ and
$M_0\in {\Bbb N}$ such that $A_0\subset D_m$ for all $m\geqslant
M_0$ and $h(f_m(A_0),
\partial f_m(D_m))\geqslant \delta_2$ for every $m\geqslant M_0.$}
\end{corollary}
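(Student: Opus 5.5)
The plan is to reduce Corollary~\ref{cor1} to Lemma~\ref{lem4} by replacing $A_0$ with a larger continuum $A^{*}\subset D_0$ that contains both $A_0$ and $A$. Since $A^{*}$ contains $A$, we get $h(f_m(A^*))\geqslant h(f_m(A))\geqslant\delta$ automatically. Once Lemma~\ref{lem4} applies to $A^*$, it yields
$$h(f_m(A_0),\partial f_m(D_m))\geqslant h(f_m(A^*),\partial f_m(D_m))\geqslant\delta_2,$$
and the corollary follows.

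\emph{Construction of $A^*$.} Since $A_0$ is compact in the domain $D_0$, we cover it by finitely many balls $B(z_j,r_j)$, $j=1,\ldots,k$, with $\overline{B(z_j,r_j)}\subset D_0$. Because $D_0$ is path connected, we join the center $z_j$ of each ball to a fixed point $a\in A$ by a path $\gamma_j$ lying in $D_0$. We then put
$$A^*:=A\cup\bigcup_{j=1}^k\bigl(\overline{B(z_j,r_j)}\cup|\gamma_j|\bigr).$$
This is a finite union of continua, each of which meets $A$ or meets one of the $|\gamma_j|$ (and each $|\gamma_j|$ contains $a\in A$). Hence $A^*$ is a continuum, and $A^*\subset D_0$.

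\emph{Membership in $D_m$.} By the definition of the kernel, there is $M_0$ such that $A^*\subset D_m$ for all $m\geqslant M_0$; in particular $A_0\subset D_m$ for $m\geqslant M_0$.

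\emph{Applying Lemma~\ref{lem4}.} The main obstacle is that Lemma~\ref{lem4} assumes the continuum lies in $\bigcap_{m=1}^{\infty}D_m$, whereas $A^*$ is only guaranteed to lie in $D_m$ for $m\geqslant M_0$. I would handle this by discarding the first $M_0-1$ terms, that is, by applying Lemma~\ref{lem4} to the shifted sequence $\{D_m\}_{m\geqslant M_0}$. This shifted sequence still has kernel $D_0$, since removing finitely many terms does not change the kernel, and it contains $A^*$ in its intersection.

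It remains to check the hypothesis on $Q$ for $A^*$. If (\ref{eq2_4}) holds at every point of $\overline{D_0}$, or $Q$ has finite mean oscillation on $\overline{D_0}$, as in Theorems~\ref{th2} and~\ref{th4_4}, then the hypothesis holds on $A^*\subset D_0$. If instead the assumption is only imposed on $A$, one inspects the proof of Lemma~\ref{lem4}: the assumption on $Q$ is used solely at the finitely many centers $x_i$ covering the continuum, so we need it at points of $A^*$. I would therefore read the corollary's hypothesis ``under conditions of Lemma~\ref{lem4}'' as requiring the condition on $Q$ on $\overline{D_0}$, which is how the corollary is used later.

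\emph{Conclusion.} Lemma~\ref{lem4} applied to the shifted sequence gives $\delta_2>0$ with $h(f_m(A^*),\partial f_m(D_m))\geqslant\delta_2$ for all $m\geqslant M_0$. Monotonicity of the distance under $A_0\subset A^*$ then gives the claim.
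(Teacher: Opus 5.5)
Your proposal is correct and uses essentially the same idea as the paper: enlarge $A_0$ to a continuum in $D_0$ that contains $A$, apply Lemma~\ref{lem4} to it, and use monotonicity of the distance. The paper does this inside a contradiction argument with the single continuum $A\cup|\gamma|\cup\overline{B(x_0,\varepsilon_0)}$ around a limit point $x_0\in A_0$, while you build one continuum directly from a finite ball cover; you also make explicit two points the paper uses tacitly, namely discarding the terms with $m<M_0$ and requiring the $Q$-hypothesis on the enlarged continuum.
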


\medskip
\begin{proof}
Since $D_m$ converges to $D_0$ as its kernel, there is $M_0\in {\Bbb
N}$ such that $A_0\subset D_m$ for all $m\geqslant M_0.$ It remains
to show that $h(f_m(A_0),
\partial f_m(D_m))\geqslant \delta_2$ for every $m\geqslant M_0$ and
some $\delta_2>0.$  Assume the contrary, i.e., $h(f_m(A_0),
\partial f_m(D_m))\rightarrow 0$ as $m\rightarrow\infty.$ Now, $h(f_m(A_0),
\partial f_m(D_m))=h(x_m, y_m)\rightarrow 0$ as $m\rightarrow\infty$
for $x_m\in A_0$ and $y_m\in f_m(D_m).$ We may consider that
$x_m\rightarrow x_0$ as $m\rightarrow\infty,$ $x_0\in A_0.$ Put
$\varepsilon_0>0$ such that $\overline{B(x_0, \varepsilon_0)}\subset
D_0.$ We join $x_0$ with $A$ by a path $\gamma$ in $D_0.$ Now,
$B:=A\cup|\gamma|\cup \overline{B(x_0, \varepsilon_0)}$ is a
continuum in $D_0$ such that $B\subset D_m$ for sufficiently large
$m,$ $x_m\in B$ for sufficiently large $m$ and $h(f_m(B))\geqslant
h(f_m(B))\geqslant\delta$ for above $m.$ By Lemma~\ref{lem4},
$h(f_m(B),
\partial f_m(D_m))\geqslant \delta_*>0$ for $m\geqslant m_3,$ $m_3\in {\Bbb
N}.$ However, the latter contradicts with $h(f_m(A_0),
\partial f_m(D_m))=h(x_m, y_m)\rightarrow 0$ as $m\rightarrow\infty,$
because obviously $h(f_m(B),
\partial f_m(D_m))\leqslant h(x_m, y_m)\rightarrow 0$ as
$m\rightarrow\infty.$ The obtained contradiction completes the
proof.~$\Box$
\end{proof}

\section{Proof of Theorem~\ref{th2}}

\begin{proposition}\label{pr6}
{\it\, Let $D$ and $G$ be domains in ${\Bbb R}^n,$ $n\geqslant 2,$
such that $D\subset G,$ and let $K$ be a compactum in $D.$ Then
$h(K,
\partial D)\leqslant h(K, \partial G).$}
\end{proposition}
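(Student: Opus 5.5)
The idea is to show that every boundary point of $G$ nearest to $K$ is "shadowed" by a boundary point of $D$ lying at least as close to $K$.

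Both quantities are well defined. $K\subset D\subset G$ is compact and $D\ne\overline{{\Bbb R}^n}$, so $\partial D\ne\varnothing$; if $\partial G=\varnothing$ the right-hand side is $+\infty$ and there is nothing to prove. Since $\partial G$ is closed in the compact space $\overline{{\Bbb R}^n}$, there are $x_0\in K$ and $y_0\in\partial G$ with $h(K,\partial G)=h(x_0,y_0)$. It therefore suffices to find $z\in\partial D$ with $h(x_0,z)\leqslant h(x_0,y_0)$, because then $h(K,\partial D)\leqslant h(x_0,z)\leqslant h(K,\partial G)$.

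To find $z$, consider a curve from $x_0$ to $y_0$. One natural choice is the chordal geodesic, i.e. the preimage under stereographic projection of the great-circle arc on the sphere $S^n(\frac{1}{2}e_{n+1},\frac{1}{2})$ from $\pi(x_0)$ to $\pi(y_0)$. The chordal distance $h(x_0,\cdot)$ increases monotonically along this arc. Denote this path by $\gamma:[0,1]\rightarrow\overline{{\Bbb R}^n}$, with $\gamma(0)=x_0\in D$ and $\gamma(1)=y_0$. Because $y_0\in\partial G$ and $G$ is open, $y_0\notin G$, and hence $y_0\notin D$.

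Now apply Proposition~\ref{pr2}. The connected set $|\gamma|$ meets both $D$ (at $x_0$) and its complement (at $y_0$), so $|\gamma|\cap\partial D\ne\varnothing$. Pick $z=\gamma(t^*)\in\partial D$. By the monotonicity of $h(x_0,\cdot)$ along $\gamma$,
$$h(x_0,z)\leqslant h(x_0,y_0),$$
which is exactly what was needed.

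The main point to check is this monotonicity along the geodesic. It holds because chordal distance on a sphere is an increasing function of the angular distance, and the angular distance from $\pi(x_0)$ increases along the great-circle arc.

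A purely topological alternative avoids the geodesic. The chordal ball $B_h(x_0,h(x_0,y_0))$ is connected and contains $x_0\in D$. If it lies inside $D$, then $y_0$ is a limit of points of this ball, so $y_0\in\overline{D}$; since $y_0\notin D$, we get $y_0\in\partial D$ and may take $z=y_0$. Otherwise the ball meets the complement of $D$, and Proposition~\ref{pr2} gives a point $z\in\partial D$ inside the ball, so $h(x_0,z)<h(x_0,y_0)$. I would present this alternative argument, since it needs no geometric monotonicity fact.
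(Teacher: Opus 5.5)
Your proof is correct and takes essentially the same route as the paper. The paper argues by contradiction: it joins the nearest pair $z_0\in K$, $w_0\in\partial G$ by a path inside the closed chordal ball $\overline{B_h(z_0,h(z_0,w_0))}$ and applies Proposition~\ref{pr2} to find a point of $\partial D$ on that path, which is exactly what your chordal geodesic does, while also making explicit the path-connectedness of the closed chordal ball that the paper uses tacitly. Your open-ball variant is equally valid, but it still relies on mild geometric facts (chordal balls are connected, and the chordal sphere lies in the closure of the open ball), so it is not really geometry-free.
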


\medskip
\begin{proof}
Since $\overline{{\Bbb R}^n}$ is connected, $\partial
D\ne\varnothing.$ By the same reason, $\partial G\ne\varnothing.$
Thus, $h(K,
\partial G)$ and $h(K,
\partial D)$ are well-defined.

\medskip
Let now $h(K,
\partial D)=h(x_0, y_0),$ $x_0\in K$ and $y_0\in \partial D,$ and
let $h(K,
\partial G)=h(z_0, z_0),$ $z_0\in K$ and $w_0\in \partial G.$ We
need to prove that $h(x_0, y_0)\leqslant h(z_0, w_0).$ We prove this
by the contradiction: assume that $h(z_0, w_0)<h(x_0, y_0).$
Consider the ball $\overline{B_h(z_0, h(z_0, w_0))}.$ Since
$D\subset G,$ the point $w_0$ cannot be inner point of $D.$ Let us
join the points $z_0$ and $w_0$ by a path $\gamma$ in
$\overline{B_h(z_0, h(z_0, w_0))}.$ Now, $|\gamma|\cap
D\ne\varnothing\ne|\gamma|\setminus D.$ By Proposition~\ref{pr2}
there is $v_0\in \partial D\cap|\gamma|.$ Thus, $v_0\in
\overline{B_h(z_0, h(z_0, w_0))}$ that means $h(z_0, v_0)\leqslant
h(z_0, w_0).$ Since $z_0\in K$ and $v_0\in \partial D,$ the latter
means that $$h(K, \partial D)\leqslant h(z_0, v_0)\leqslant h(z_0,
w_0)<h(x_0, y_0)=h(K, \partial D)\,.$$
The obtained contradiction completes the proof.~$\Box$
\end{proof}

{\it Proof of Theorem~\ref{th2}}. \textbf{I}. Let us prove the
item~\textbf{I} of Theorem~\ref{th2} by the contradiction. Assume
that the assertion of Theorem~\ref{th2} is false. Then for some
$\varepsilon_0>0$ and for numbers $\delta_k=\frac{1}{k},$
$k=1,2,\ldots ,$ there are $m_k\in {\Bbb N}$ and $x_k, y_k\in
D_{m_k}$ such that $h(f_{m_k}(x_k), f_{m_k}(y_k))\geqslant
\varepsilon_0$ whenever $|x_k-y_k|<\frac{1}{k}.$ By Lemma~\ref{lem1}
every $f_{m_k}$ has a continuous extension to $\partial  D_{m_k},$
so that $f_{m_k}$ is uniformly continuous in $\overline{D_{m_k}}.$
Thus, we may consider that the sequence $m_k,$ $k=1,2,\ldots, $ is
increasing.  Without loss of generality, we may consider that the
above holds for $m=1,2, \ldots ,$ but not for $m_k.$ In particular,
\begin{equation}\label{eq2_1}
h(f_{m}(x_m), f_{m}(y_m))\geqslant \varepsilon_0\,,\qquad m\in {\Bbb
N}\,.
\end{equation}
By the compactness of $\overline{{\Bbb R}^n},$ we may consider that
$x_m, y_m\rightarrow x_0$ as $m\rightarrow\infty.$ There are two
cases: 1) $x_0\in D_0,$ 2) $x_0\in \partial D_0.$ In the first case,
when $x_0$ is the inner point of $D_0,$ all mappings $f_m$ except of
finite numbers are defined in some ball $B(x_0,
\varepsilon_0)\subset D_0\cap D_m,$ $m\geqslant M_0.$ Now, $f_m$ is
equicontinuous at $x_0$ (see \cite[Theorems~6.1, 6.5]{RS$_1$} for
$p=n$ and \cite[Theorems~4.1, 4.3]{Sev$_1$} for $n-1<p<n.$ Now, by
the triangle inequality,
\begin{equation}\label{eq1_1}h(f_m(x_m), f_m(y_m))\leqslant h(f_m(x_m),
f_m(x_0))+h(f_m(x_0), f_m(y_m))\,. \end{equation}
In this case, the right side in~(\ref{eq1_1}) tends to zero as
$m\rightarrow\infty$ which contradicts with~(\ref{eq2_1}), and the
proof finishes.

\medskip
Let us consider the case 2) $x_0\in \partial D_0.$ Observe that
$A_m:=A\subset {\Bbb R}^n\setminus B(x_0, \varepsilon_0)$ for some
$\varepsilon_0>0,$ because the $A$ is a continuum in $D_0.$ Thus,
all of conditions of Theorem~\ref{th1} are satisfied. By this
theorem, for any $\varepsilon>0$ there is $\delta=\delta(x_0,
\varepsilon)>0$ such that $h(f_m(x), f_m(x^{\,\prime}))<\varepsilon$
whenever $x, x^{\,\prime}\in B(x_0, \delta)\cap D_m$ and $m\in {\Bbb
N}.$ In particular, it follows from this theorem that there is
$M=M(\varepsilon)\in {\Bbb N}$ such that
\begin{equation}\label{eq1A_1}
h(f_m(x_m), f_m(y_m))<\varepsilon\,,\qquad m\geqslant
M(\varepsilon)\,.
\end{equation}
The relations~(\ref{eq2_1}) and~(\ref{eq1A_1}) contradict each other
and prove the uniform equicontinuity of~$f_m.$

\medskip
\textbf{II.} Given a compactum $K$ in $D_0,$ there is $m_0=m_0(K)$
such that $K\subset D_m$ for $m\geqslant m_0(K).$ Now, by the
proving above, $f_m,$ $m=1,2,\ldots,$ is uniformly equicontinuous
family on $K.$ By Arzela-Ascoli Theorem (see. e.g.,
\cite[Theorem~20.4]{Va}) $f_m$ forms a normal family on $K,$ i.e.,
there exists a subsequence $f_{m_k},$ $k=1,2,\ldots ,$ such that
$f_{m_k}(x)\rightarrow f(x)$ as $k\rightarrow\infty.$ Since $K$ is
an arbitrary compactum in $D_0,$ the mapping $f(x)$ is defined in
all $D_0.$ Now, we prove that $f$ has a continuous extension at
every point $x_0\in \partial D_0.$ By the proving above, given
$\varepsilon>0$ there is $\delta=\delta(\varepsilon)>0$ such that
$h(f_m(x), f_m(y))<\varepsilon$ whenever $x,y\in D_m,$
$|x-y|<\delta$ and $m\in {\Bbb N}.$ Fix $x_0\in \partial D_0.$ Let
$\delta_*=\delta_*(\varepsilon)=\delta/2$ and let $x, y\in B(x_0,
\delta_*)\cap D_0.$ Now, by the triangle inequality
$|x-y|<2\delta_*=\delta.$ Since $x, y\in D_0,$ there is $m_0=m_0(x,
y)\in {\Bbb N}$ such that $x, y\in D_0$ for every $m\geqslant m_0.$
Taking into account the mentioned above, we obtain that $h(f_m(x),
f_m(y))<\varepsilon,$ $m\geqslant m_0.$ Taking the limit as
$m\rightarrow\infty$ in the latter inequality, we obtain that
$h(f(x), f(y))\leqslant \varepsilon.$ Now, $f$ satisfies the Cauchy
condition at $x_0,$ so that $f$ has a limit at $x_0,$ as required.

\medskip
Let $A_0$ be a compactum in $D_0$ and let $G$ be an arbitrary domain
in $D_0$ with $A_0\cup A\overline{G}\subset D_0.$ Since $D_m$
converges to $D_0$ as its kernel, there is $m_0\in {\Bbb N}$ such
that $G\cup A_0\subset D_m,$ $m\geqslant m_0.$ Due to the above,
$f_{m_k}$ converges to $f$ uniformly in $G.$ Now, $f$ is either a
homeomorphism, or a constant in $G$ (see \cite[Theorems~4.1 and
4.2]{RS$_2$} for $p=n$ and \cite[Theorem~1]{Cr$_2$} for $n-1<p<n$).
Due to the condition $h(f_m(A))\geqslant\delta,$ $f$ is
homeomorphism in $G.$ Since $G$ is arbitrary compact subdomain in
$D_0,$ $f$ is continuous and injective in $D_0.$ Now, $f$ is a
homeomorphism in $D_0,$ as required (see, e.g.,
\cite[Corollary~3.1]{RS$_2$}).

\medskip
Let us to prove that, there is $\delta_2>0$ and $M_0\in {\Bbb N}$
such that $A_0\subset D_m$ for all $m\geqslant M_0$ and $h(f_m(A_0),
\partial f_m(D_m))\geqslant \delta_2$ for every $m\geqslant M_0.$
Without loss of generality, we may consider that $f_m$ converges to
$f$ locally uniformly as $m\rightarrow\infty.$ Let
$\varepsilon:=\frac{1}{2}h(f(A_0),
\partial f(G))$ ($\varepsilon>0$ because $f$ is a
homeomorphism in $G$ and, consequently, $f(A_0)\in
\textrm{Int}\,f(G)$). By the triangle inequality and due to the
uniform convergence of $f_m$ to $f,$ given $\varepsilon>0$ there is
$M=M(\varepsilon)$ such that $h(f_m(x), f(x))<\varepsilon/2$ for any
$x\in A_0.$ Let $y\in \partial f(G).$ By the triangle inequality
$$h(f_m(x), y)\geqslant h(y, f(x))-h(f(x), f_m(x))\geqslant \varepsilon-\varepsilon/2=\varepsilon/2
=\frac{1}{4}h(f(A_0),
\partial f(G))\,.$$
Thus, taking the $\inf$ over $x\in A_0$ and $y\in \partial f(G),$ we
obtain that $h(f_m(A_0), \partial f(G))\geqslant
\frac{1}{4}h(f(A_0),
\partial f(G)):=\delta_*.$ Let now $$h(f_m(A_0), \partial f_m(G))=h(f_m(A_0), f_m(\partial
G))=h(f_m(x_m), f_m(x^{\,\prime}_m))\,,$$ where $x_m\in A_0$ and
$x^{\,\prime}_m\in \partial G.$ Now, by the triangle inequality and
by the locally uniform convergence of $f_m$ to $f,$ we obtain that
$$h(f_m(x_m), f_m(x^{\,\prime}_m))\geqslant h(f_m(x_m), f(x^{\,\prime}_m))-
h(f(x^{\,\prime}_m), f_m(x^{\,\prime}_m))\geqslant
\delta_*-\delta_*/2=\delta_*/2>0$$
for sufficiently large $m\in {\Bbb N}.$ The latter inequality
implies that $h(f_m(A_0), \partial f_m(G))\geqslant
\delta_*/2:=\delta_2.$ Finally, since $G\subset D_m$ for
sufficiently large $m\in {\Bbb N},$ by Proposition~\ref{pr6} we
obtain that $h(f_m(A_0), \partial f_m(G))\geqslant\delta_2$ for
above $m,$ as required.

\textbf{III.} Let $x_m\in D_{m},$ $m=1,2,\ldots,$ $x_m\rightarrow
x_0$ and $f_m\rightarrow f$ locally uniformly in $D_0$ as
$m\rightarrow \infty.$ We need to show that $f_{m}(x_m)\rightarrow
f(x_0).$ If $x_0\in D_0,$ the relation $f_{m}(x_m)\rightarrow
f(x_0)$ as $m\rightarrow\infty$ directly follows from the locally
uniform convergence of $f_m$ to $f$ in $D_0.$ Now, let $x_0\in
\partial D_0.$ We lead the proof by the contradiction, i.e., assume
that $h(f_{m_k}(x_{m_k}), f(x_0))\geqslant \varepsilon_0>0$ for some
an increasing sequence of numbers $m_k=1,2,\ldots ,$ and some
$\varepsilon_0>0.$ Again, going to renumbering, if required, we may
assume that
\begin{equation}\label{eq1A}
h(f_{m}(x_{m}), f(x_0))\geqslant \varepsilon_0>0\,,\qquad
m=1,2,\ldots \,.
\end{equation}
Consider some a sequence $y_m\rightarrow x_0,$ $y_m\in D_0,$
$m\rightarrow\infty.$ Now, given $m\in {\Bbb N},$ by the locally
uniform convergence of $f_m$ to $f,$ there is $k=k_m$ such that
$h(f_{k_m}(y_{k_m}), f(y_{k_m}))<1/m\,.$ We may consider that $k_m$
is an increasing sequence by $m=1,2,\ldots .$ Let $z_m:=y_{k_m}.$
Now, $z_m\rightarrow x_0$ and $f(z_m)\rightarrow f(z_0)$ as
$m\rightarrow \infty$ because $f$ is continuous at $x_0$ (see
item~\textbf{II}). Now, by the triangle inequality and~\textbf{I} we
obtain that
\begin{gather}h(f_{k_m}(x_{k_m}), f(x_0))\leqslant h(f_{k_m}(x_{k_m}),
f_{k_m}(z_m))\nonumber\\
\label{eq1}+h(f_{k_m}(z_{m}), f(z_m))+h(f(z_m),f(x_0))\rightarrow 0
\end{gather}
as $m\rightarrow\infty.$ The relation~(\ref{eq1}) contradicts
with~(\ref{eq1A}), that proves that $f_{m}(x_m)\rightarrow f(x_0)$
as $m\rightarrow\infty.$ Finally, $f(x_0)\in \partial f(D_0)$
whenever $x_0\in
\partial D_0,$ because $f$ is a homeomorphism in $D_0$ (see, e.g.,
\cite[Proposition~13.5]{MRSY}). Theorem is completely proved.~$\Box$

\section{Main Lemma for mappings with a branching}

Given $p\geqslant 1,$ a number $\delta>0,$ a family $\frak{D}$ of
domains in ${\Bbb R}^n,$ $n\geqslant 2,$ and a Lebesgue measurable
function $Q:{\Bbb R}^n\rightarrow[0, \infty]$, we denote by
$\frak{J}_{Q, \delta, p}(\frak{D})$ the family of all open discrete
closed ring mappings $f:D_f\rightarrow \overline{{\Bbb R}^n}$
defined in some a domain $D_f\in \frak{D}$ and
satisfying~(\ref{eq2*!})--(\ref{eq8B_2}) at any point
$x_0\in\overline{\frak{D}}$ such that the following condition holds:
for any domain $D^{\,\prime}_f:=f(D_f)$ there exists a continuum
$K_f\subset D^{\,\prime}_f$ such that $h(K_f)\geqslant \delta$ and
$h(f^{\,-1}(K_f), \partial D_f)\geqslant \delta>0.$ The following
lemma is an analog of Lemma~\ref{lem2} for mappings which are not
homeomorphisms.

\medskip
\begin{lemma}\label{lem3}
{\it\, Let $p\in (n-1, n],$ let $x_0\in \overline{\frak{D}}$ and let
$f_m\in \frak{J}_{Q, \delta, p}(\frak{D}),$ $m=1,2,\ldots ,$
$f_m:D_m\rightarrow \overline{{\Bbb R}^n}$ be a sequence such that:

\medskip
1) for every $m\in {\Bbb N},$ a domain $D_m$ is locally connected at
$\partial D_m;$

2) for every neighborhood $U$ of $x_0$ there is a neighborhood $V$
of $x_0,$ $V\subset U,$ and $M=M(U, x_0)\in {\Bbb N}$ such that
$V\cap D_m$ is connected for every $m\geqslant M(U, x_0);$

3) there is $\varepsilon_0>0$ such that $f^{\,-1}_m(K_m)\subset
{\Bbb R}^n\setminus B(x_0, \varepsilon_0),$ where $K_m:=K_{f_m}$ is
a continuum corresponding to the definition of the class
$\frak{J}_{Q, \delta, p}(\frak{D})$ for $f_m;$

4) the family $f_m(D_m)$ is equi-uniform with respect to $p$-modulus
over all $m\in {\Bbb N};$

5) there exists $\varepsilon_0=\varepsilon_0(x_0)>0$ and a Lebesgue
measurable function $\psi:(0, \varepsilon_0)\rightarrow [0,\infty]$
such that relations~(\ref{eq7***_2})--(\ref{eq3.7.2_2}) hold
as $\varepsilon\rightarrow 0,$ where $A(x_0, \varepsilon,
\varepsilon_0)$ is defined~in~(\ref{eq1**}).

\medskip
Then the family $f_m,$ $m=1,2,\ldots,$ is equicontinuous with
respect to $x_0,$ i.e., for any $\varepsilon>0$ there is
$\delta=\delta(x_0, \varepsilon)>0$ such that $h(f_m(x),
f_m(x^{\,\prime}))<\varepsilon$ whenever $x, x^{\,\prime}\in B(x_0,
\delta)\cap D_m$ and $m\in {\Bbb N}.$}
\end{lemma}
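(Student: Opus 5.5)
We follow the scheme of the proof of Lemma~\ref{lem2}, replacing the direct image family by a family of total liftings. Suppose the conclusion fails. Then there are $a>0$, indices $m_k$ and points $x_k,x'_k\in B(x_0,1/k)\cap D_{m_k}$ with $h(f_{m_k}(x_k),f_{m_k}(x'_k))\geqslant a$.

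If $m_k$ stays bounded, then some fixed $f_{k_0}$ has nontrivial cluster set at $x_0\in\partial D_{k_0}$. This contradicts Lemma~\ref{lem1}, which applies because $f_{k_0}$ is open, discrete and closed, so $C(f_{k_0},\partial D_{k_0})\subset\partial f_{k_0}(D_{k_0})$ (\cite[Theorem~3.3]{Vu}). Also, $f_{k_0}(D_{k_0})$ is strongly accessible, being an equi-uniform family member (Remark~\ref{rem1}), and condition~5) supplies $\psi$. If $x_0$ is an inner point of $D_{k_0}$, continuity of $f_{k_0}$ gives the contradiction instead. So after renumbering, $m_k=m$ is increasing. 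Using condition~2) exactly as in Lemma~\ref{lem2}, we pass to a subsequence with $x_m,x'_m\in V_m\cap D_m$, where $V_m\subset B(x_0,2^{-m})$ and $V_m\cap D_m$ is connected. We then join $x_m$ to $x'_m$ by a path $\gamma_m$ in $V_m\cap D_m$. Setting $C_m:=f_m(|\gamma_m|)$, we get $h(C_m)\geqslant a$.

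The new point is the lower modulus bound. By equi-uniformity (condition~4), since $h(K_m)\geqslant\delta$ and $h(C_m)\geqslant a$, we have $M_p(\Gamma(C_m,K_m,D'_m))\geqslant\sigma>0$ for all $m$, where $D'_m=f_m(D_m)$. Each path $\beta\in\Gamma(C_m,K_m,D'_m)$ starts at a point $f_m(y)$ with $y\in|\gamma_m|$, so we lift it by Proposition~\ref{pr4_a} to a total $f_m$-lifting $\alpha$ starting at $y$; we may reparametrize $\beta$ to start on $C_m$, and by the openness of $D'_m$ its interior lies in $D'_m$. The endpoint of $\beta$ lies in $K_m\subset D'_m$, an interior point of $f_m(D_m)$. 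Hence the lifting does not run to $\partial D_m$, and by closedness of $f_m$ it extends to the closed interval with endpoint in $f_m^{-1}(K_m)$. This yields a family $\Delta_m\subset\Gamma(|\gamma_m|,f_m^{-1}(K_m),D_m)$ with $f_m(\Delta_m)\supset\Gamma(C_m,K_m,D'_m)$, hence $M_p(f_m(\Delta_m))\geqslant\sigma$.

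For the upper bound, condition~3) places $f_m^{-1}(K_m)$ outside $B(x_0,\varepsilon_0)$, while $|\gamma_m|\subset B(x_0,2^{-m})$. So every path in $\Delta_m$ has a subpath in $\Gamma(S(x_0,2^{-m}),S(x_0,\varepsilon_0),A(x_0,2^{-m},\varepsilon_0))$. We now apply (\ref{eq2*!}) with $\eta=\psi/I(2^{-m},\varepsilon_0)$, as in (\ref{eq10_2})--(\ref{eq11_2}); condition~5) then gives $M_p(f_m(\Delta_m))\leqslant\alpha(2^{-m})\to0$. This contradicts $\sigma>0$.

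The main obstacle is the lifting step, namely ensuring that the lifted paths terminate in $f_m^{-1}(K_m)$ rather than escaping to $\partial D_m$. This is where closedness of $f_m$ and the second part of Proposition~\ref{pr4_a} are used. The case of bounded $m_k$ also needs care, since Lemma~\ref{lem1} must be justified via closedness of $f_{k_0}$.
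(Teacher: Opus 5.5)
Your proof is correct and follows the paper's own argument essentially step for step. The shared steps are: the same contradiction setup, with Lemma~\ref{lem1} disposing of bounded $m_k$; the paths $\gamma_m$ in $V_m\cap D_m$ obtained from condition~2); the lower bound $M_p(\Gamma(C_m,K_m,D'_m))\geqslant\sigma$ from equi-uniformity; the transfer to total $f_m$-liftings ending in $f_m^{-1}(K_m)$ via closedness; and the upper bound from (\ref{eq2*!}) with $\eta=\psi/I(2^{-m},\varepsilon_0)$. You are in fact slightly more explicit than the paper about why the liftings terminate in $f_m^{-1}(K_m)$ and about minorizing them by paths joining $S(x_0,2^{-m})$ and $S(x_0,\varepsilon_0)$. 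Strictly, those subpaths should be taken inside $D_m$, which is the family to which (\ref{eq2*!}) applies, not inside the full annulus.
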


\medskip
\begin{proof} Suppose the contrary. Then there exists $x_0\in \overline{\frak{D}}$ and
a number $a>0$ with the following property: for $\delta=1/k,$
$k=1,2,\ldots,$ there are $m_k\in {\Bbb N}$ and $x_k,
x^{\,\prime}_k\in B(x_0, 1/k)\cap D_{m_k}$ such that
$h(f_{m_k}(x_k), f_{m_k}(x^{\,\prime}_k))\geqslant a.$ Arguing
similarly to the proof of Lemma~\ref{lem2}, we may consider that the
sequence $m_k$ is increasing by $k.$ Without loss of generality,
going to renumbering, if required, we may consider that
$h(f_{m_k}(x_k), f_{m_k}(x^{\,\prime}_k))\geqslant a$ holds for $m$
instead of $m_k,$ i.e., (\ref{eq6***}) is satisfied. We may assume
that $x_0\ne \infty.$ Now, we use the condition 2): for every
neighborhood $U$ of $x_0$ there is a neighborhood $V$ of $x_0,$
$V\subset U,$ and $M=M(k)$ such that $V\cap D_m$ is connected for
every $m\geqslant M(k).$ Thus, given a ball $U_1=B(x_0, 2^{\,-1})$
there is $V_1\subset U_1$ and $M(1)\in {\Bbb N}$ such that $D_m\cap
V_1$ is connected for $m\geqslant M(1).$ Since $x_m,
x_m^{\,\prime}\rightarrow x_0$ as $m\rightarrow \infty,$ there is
$m_1\geqslant M(1)$ such that $x_{m_1}, x^{\,\prime}_{m_1}\in
V_1\cap D_{m_1}.$ Follow, given a ball $U_2=B(x_0, 2^{\,-2})$ there
is $V_2\subset U_2$ and $M(2)\in {\Bbb N}$ such that $D_m\cap V_2$
is connected for $m\geqslant M(2).$ Since $x_m,
x_m^{\,\prime}\rightarrow x_0$ as $m\rightarrow \infty,$ there is
$m_2\geqslant \max\{M(2), m_1\}$ such that $x_{m_2},
x^{\,\prime}_{m_2}\in V_2\cap D_{m_2}.$ Etc. In general, given a
ball $U_k=B(x_0, 2^{\,-k})$ there is $V_k\subset U_k$ and $M(k)\in
{\Bbb N}$ such that $D_m\cap V_k$ is connected for $m\geqslant
M(k).$ Since $x_m, x_m^{\,\prime}\rightarrow x_0$ as $m\rightarrow
\infty,$ there is $m_k\geqslant \max\{M(k), m_{k-1}\}$ such that
$x_{m_k}, x^{\,\prime}_{m_k}\in V_k\cap D_{m_k}.$ Relabeling, if
required, we may consider that the same sequences $x_m$ and
$x^{\prime}_m$ satisfy the above conditions, i.e., given a ball
$U_m=B(x_0, 2^{\,-m}),$ $m=1,2,\ldots,$ there is $V_m\subset U_m$
such that $x_m, x^{\,\prime}_m\in D_m\cap V_m$ while $D_m\cap V_m$
is connected. We join the points $x_m$ and $x^{\,\prime}_m$ by a
path $\gamma_m:[0,1]\rightarrow D_m\cap V_m$ such that
$\gamma_m(0)=x_m,$ $\gamma_m(1)=x^{\,\prime}_m$ and $\gamma_m(t)\in
D_m\cap V_m$ for $t\in (0,1),$ see~Figure~\ref{fig2}.
\begin{figure}[h]
\centerline{\includegraphics[scale=0.5]{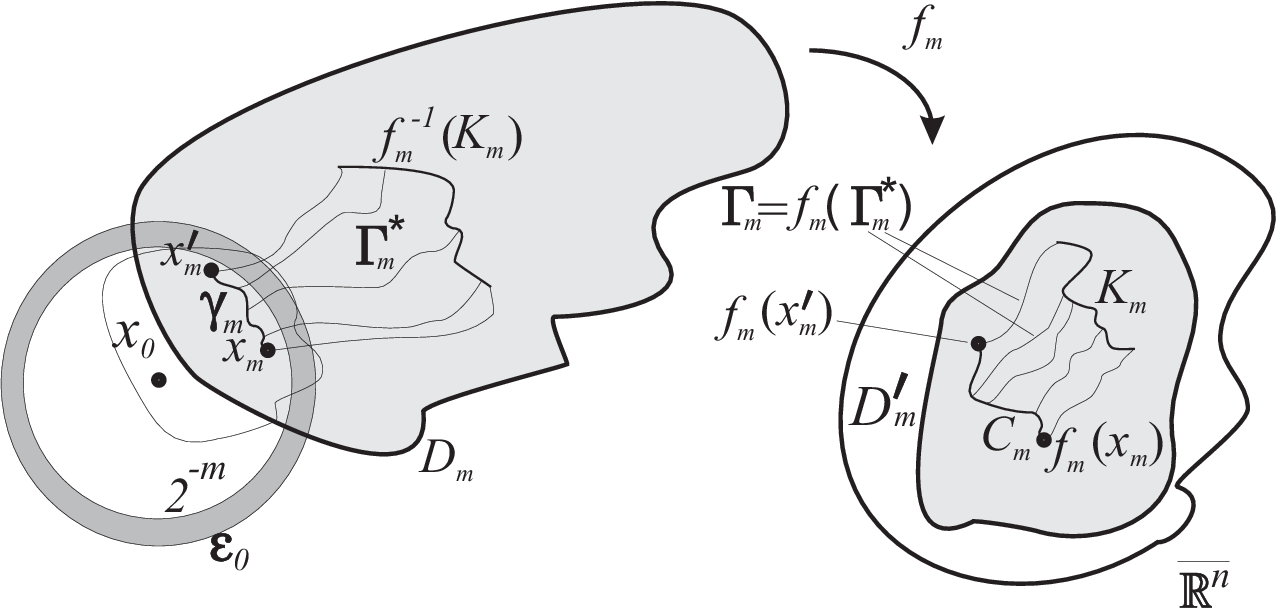}} \caption{To
prove Lemma~\ref{lem3}}\label{fig2}
\end{figure}
Denote by $C_m$ the image of the path $\gamma_m$ under the mapping
$f_m.$ It follows from relation (\ref{eq6***}) that a
condition~(\ref{eq5.1}) is satisfied, where $h(C_m)$ denotes the
chordal diameter of the set $C_m.$

By the definition of the family $\frak{J}_{Q, \delta, p}(\frak{D}),$
for any $f_m\in\frak{J}_{Q, \delta, p}(\frak{D})$ and any domain
$D^{\,\prime}_m:=f_m(D_m)$ there exists a continuum $K_m\subset
D^{\,\prime}_m$ such that $h(K_m)\geqslant \delta$ and
$h(f_m^{\,-1}(K_m),
\partial D_m)\geqslant \delta>0.$
Since by the condition of the lemma the domains $D^{\,\prime}_m$
form the equi-uniform family with respect to $p$-modulus and due to
(\ref{eq5.1}), we obtain that the inequality
\begin{equation}\label{eq13_4}
M_p(\Gamma(K_m, C_m, D^{\,\prime}_m))\geqslant b
\end{equation}
holds for all $m=1,2,\ldots$ and some $b>0.$
Consider the family $\Gamma_m,$ consisting of all paths $\beta:[0,
1)\rightarrow D^{\,\prime}_m,$ where $\beta(0)\in C_m$ and
$\beta(t)\rightarrow p\in K_m$ as $t\rightarrow 1.$ Let $\Gamma^*_m$
be a family of all total $f_m$-liftings $\alpha:[0, 1)\rightarrow D$
of the family $\Gamma_m$ starting at $|\gamma_m|$ (see
Proposition~\ref{pr4_a}). Since $f_m$ is closed, we have:
$\alpha(t)\rightarrow f^{\,-1}_m(K_m),$ where $f^{\,-1}_m(K_m)$ is
the total preimage of the continuum $K_m$ under the mapping $f_m.$
By~(\ref{eq2*!}), it follows that
\begin{gather}
\nonumber M_p(f_m(\Gamma_m^*))\\
\label{eq10A_2} \leqslant M_p(f_m(\Gamma(|\gamma_m|,
f^{\,-1}_m(K_m), D_m)))\leqslant \int\limits_{A(x_0, 2^{\,-m},
\varepsilon_0)} Q(x)\cdot \eta^p(|x-x_0|)\ dm(x)
\end{gather}
for each measurable function $\eta: (2^{\,-m},
\varepsilon_0)\rightarrow [0,\infty ],$ such that
$\int\limits_{2^{\,-m}}^{\varepsilon_0}\eta(r)\,dr \geqslant 1.$
Note that, the function
$$\eta(t)=\left\{
\begin{array}{rr}
\psi(t)/I(2^{\,-m}, \varepsilon_0), & t\in (2^{\,-m},
\varepsilon_0),\\
0, & t\in {\Bbb R}\setminus (2^{\,-m}, \varepsilon_0)\,,
\end{array}
\right. $$ where $I(\varepsilon,
\varepsilon_0):=\int\limits_{\varepsilon}^{\varepsilon_0}\psi(t)\,dt,$
satisfies the normalization condition~(\ref{eq8B_2}). Therefore from
the conditions~(\ref{eq7***_2})--(\ref{eq3.7.2_2}) and
(\ref{eq10A_2}) it follows that
\begin{equation}\label{eq11B_1}
M_p(f_m(\Gamma^{\,*}_m))\leqslant \alpha(2^{\,-m})\rightarrow 0
\end{equation}
as $m\rightarrow \infty,$ where $\alpha(\varepsilon)$ is some
nonnegative function tending to zero as $\varepsilon\rightarrow 0,$
which exists by the condition (\ref{eq3.7.2_2}). In addition,
$f_m(\Gamma^{\,*}_m)= \Gamma_m$ and $M_p(\Gamma_m)=M_p(\Gamma(K_m,
C_m, D^{\,\prime}_m)).$ Thus
\begin{equation}\label{eq12_4}
M_p(f_m(\Gamma^{\,*}_m))=M_p(\Gamma(K_m, C_m, D^{\,\prime}_m))\,.
\end{equation}
However, the relation~(\ref{eq11B_1}) contradicts with
(\ref{eq13_4}) and (\ref{eq12_4}). The resulting contradiction
indicates that the assumption (\ref{eq6***}) was incorrect, and,
therefore, the family $\frak{J}_{Q, \delta, p}(\frak{D})$ is
equicontinuous at $x_0\in
\partial D.$~$\Box$
\end{proof}

\medskip
\begin{theorem}\label{th3}
{\it\, Let $p\in (n-1, n],$ let $x_0\in \overline{\frak{D}}$ and let
$f_m\in \frak{J}_{Q, \delta, p}(\frak{D}),$ $m=1,2,\ldots ,$
$f_m:D_m\rightarrow \overline{{\Bbb R}^n}$ be a sequence such that:

\medskip
1) for every $m\in {\Bbb N},$ a domain $D_m$ is locally connected at
$\partial D_m;$

2) for every neighborhood $U$ of $x_0$ there is a neighborhood $V$
of $x_0,$ $V\subset U,$ and $M=M(U, x_0)\in {\Bbb N}$ such that
$V\cap D_m$ is connected for every $m\geqslant M(U, x_0);$

3) there is $\varepsilon_0>0$ such that $f^{\,-1}_m(K_m)\subset
{\Bbb R}^n\setminus B(x_0, \varepsilon_0),$ where $K_m:=K_{f_m}$ is
a continuum corresponding to the definition of the class
$\frak{J}_{Q, \delta, p}(\frak{D})$ for $f_m;$

4) the family $f_m(D_m)$ is equi-uniform with respect to $p$-modulus
over all $m\in {\Bbb N};$

5) at least one of two following conditions hold: $Q$ has a finite
mean oscillation in $\overline{D_0},$ or relation~(\ref{eq2_4})
holds for every $x_0\in \overline{D_0}$ and some $\beta(x_0)>0.$

\medskip
Then the family $f_m,$ $m=1,2,\ldots,$ is equicontinuous with
respect to $x_0,$ i.e., for any $\varepsilon>0$ there is
$\delta=\delta(x_0, \varepsilon)>0$ such that $h(f_m(x),
f_m(x^{\,\prime}))<\varepsilon$ whenever $x, x^{\,\prime}\in B(x_0,
\delta)\cap D_m$ and $m\in {\Bbb N}.$}
\end{theorem}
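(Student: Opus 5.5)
Theorem~\ref{th3} is obtained from Lemma~\ref{lem3} exactly as Theorem~\ref{th1} was obtained from Lemma~\ref{lem2}: conditions 1)--4) of Theorem~\ref{th3} coincide word for word with conditions 1)--4) of Lemma~\ref{lem3}. So it suffices to produce, at the point $x_0\in\overline{\frak{D}}$, a number $\varepsilon_0>0$ and a function $\psi$ satisfying (\ref{eq7***_2})--(\ref{eq3.7.2_2}); condition 5) of Lemma~\ref{lem3} then holds and the lemma gives the equicontinuity at $x_0$.

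To produce $\psi$ I would apply Proposition~\ref{pr1} at $x_0$, distinguishing the two alternatives of condition 5).

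\emph{FMO case.} If $Q$ has finite mean oscillation in $\overline{D_0}$, then $Q\in FMO(x_0)$, and item (a) of Proposition~\ref{pr1} applies directly with $p\in(n-1,n]$.

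\emph{Divergence case.} Suppose (\ref{eq2_4}) holds. Since $Q^{\,\prime}=\max\{Q,1\}\geqslant Q$, every admissible inequality (\ref{eq2*!}) for $Q$ remains true with $Q^{\,\prime}$. Hence I may replace $Q$ by $Q^{\,\prime}$ and verify item (c) of Proposition~\ref{pr1} for $q^{\,\prime}_{x_0}$.
\begin{itemize}
\item The divergence part of (c) is exactly (\ref{eq2_4}), taken with $\delta_0:=\beta(x_0)$.
\item For the finiteness part, $q^{\,\prime}_{x_0}\geqslant 1$ gives
$$\int\limits_{\delta}^{\delta_0}\frac{dt}{t^{\frac{n-1}{p-1}}q^{\,\prime\,\frac{1}{p-1}}_{x_0}(t)}\leqslant\int\limits_{\delta}^{\delta_0}\frac{dt}{t^{\frac{n-1}{p-1}}}<\infty,\qquad 0<\delta<\delta_0.$$
\end{itemize}
Proposition~\ref{pr1} then supplies $\varepsilon_0\in(0,1)$ and $\psi\geqslant0$ with (\ref{eq7***_2}) and (\ref{eq3.7.2_2}) for $Q^{\,\prime}$. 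Because $Q\leqslant Q^{\,\prime}$, the relation (\ref{eq3.7.2_2}) also holds for $Q$ itself.

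Two minor points remain to check.
\begin{itemize}
\item \emph{The point $x_0$ must lie in $\overline{D_0}$.} Condition 5) is stated for points of $\overline{D_0}$, while $x_0$ is only assumed to lie in $\overline{\frak{D}}$. Following the paper's usage (compare Theorem~\ref{th1}), I would interpret condition 5) as assumed at the given point $x_0$. In the application to Theorem~\ref{th4_4} this holds automatically, since there $\overline{\frak{D}}\subset\overline{D_0}$ because $D_m\subset D_0$.
\item \emph{The case $x_0=\infty$.} Here I would use the convention stated in Lemma~\ref{lem2}: apply the argument to $\widetilde{Q}(x)=Q(x/|x|^2)$ at the origin.
\end{itemize}

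The only step requiring care is this reduction from $Q$ to $Q^{\,\prime}$ in the divergence case. Everything else is the direct combination of Proposition~\ref{pr1} with Lemma~\ref{lem3}.
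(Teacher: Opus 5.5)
Your proposal is correct and follows the paper's proof, which simply combines Lemma~\ref{lem3} with Proposition~\ref{pr1} at the point $x_0$. Your extra step in the divergence case fills in a mismatch the paper leaves implicit: (\ref{eq2_4}) is stated with $q^{\,\prime}_{x_0}$, while item (c) of Proposition~\ref{pr1} is stated with $q_{x_0}$. You handle this correctly by applying item (c) to $Q^{\,\prime}=\max\{Q,1\}$, where $q^{\,\prime}_{x_0}\geqslant 1$ gives the finiteness half, and then passing back to $Q$ via $Q\leqslant Q^{\,\prime}$.
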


\medskip
{\it Proof} follows directly by Lemma~\ref{lem3} and
Proposition~\ref{pr1}.~$\Box$

\medskip
{\it Proof of Theorem~\ref{th4_4}.} The proofs of the statements in
item~\textbf{I} are based on Theorem~\ref{th3} (instead of
Theorem~\ref{th1}) and are completely similar to the corresponding
proofs of Theorem~\ref{th2}.

\medskip
\textbf{II.} Let $A_0$ be a compactum in $D_0.$ We show that there
is $\delta_2>0$ and $M_0\in {\Bbb N}$ such that $A_0\subset D_m$ and
$h(f_m(A_0),
\partial f_m(D_m))\geqslant \delta_2$ for every $m\geqslant M_0.$
Note that, the relation $A_0\subset D_m$ for sufficiently large
$m\in{\Bbb N}$ holds obviously, because $D_m$ converges to $D_0$ as
its kernel by the assumption. Now, we show that $h(f_m(A_0),
\partial f_m(D_m))\geqslant \delta_2$
for sufficiently large $m\in {\Bbb N}.$ Assume the contrary. Then
$h(f_{m_k}(A_0),
\partial f_{m_k}(D_{m_k}))\rightarrow 0$ as $k\rightarrow\infty$
for some increasing a sequence of
numbers $m_k,$ $k=1,2,\ldots .$ Going to renumbering, if required,
we may consider that the latter holds for $m$ instead of $m_k,$
i.e.,
\begin{equation}\label{eq2}
h(f_{m}(A_0),
\partial f_{m}(D_{m}))\rightarrow 0\,,\quad m\rightarrow\infty\,.
\end{equation}
Let $K_m\subset D^{\,\prime}_m$ be a continuum from the definition
of the class $\frak{R}_{Q, \delta, p, E}(D_0, \frak{D})$ such that
$h(K_m)\geqslant \delta$ and $h(f_m^{\,-1}(K_m),
\partial D_m)\geqslant \delta>0.$ Let $h(K_m)=h(y_m, z_m),$ where $y_m, z_m\in
K_m.$ Now, $y_m=f_m(x_m)$ and $z_m=f_m(x^{\,\prime}_m)$ for some
$x_m, x^{\,\prime}_m\in f_m^{\,-1}(K_m).$ We may consider that
$x_m\rightarrow x_0$ and $x^{\,\prime}_m\rightarrow x^{\,\prime}_0$
as $m\rightarrow\infty.$ Due to the condition $h(f_m^{\,-1}(K_m),
\partial D_m)\geqslant \delta>0$ we have that $h(x_m, \partial D_m)\geqslant
\delta$ and $h(x^{\,\prime}_m, \partial D_m)\geqslant \delta,$
$m=1,2,\ldots .$ Since by the assumption $D_m\subset D_0,$
$m=1,2,\ldots,$ by Proposition~\ref{pr6} $h(x_m, \partial
D_0)\geqslant \delta$ and $h(x^{\,\prime}_m, \partial D_0)\geqslant
\delta,$ $m=1,2,\ldots .$ Now, $x_0, x^{\,\prime}_0\in D_0.$ Let
$\sigma_1, \sigma_2>0$ be numbers such that $\overline{B(x_0,
\sigma_1)}\subset D_0$ and $\overline{B(x^{\,\prime}_0,
\sigma_2)}\subset D_0.$ We may consider that $x_m\in
\overline{B(x_0, \sigma_1)}$ and $x^{\,\prime}_m\in
\overline{B(x^{\,\prime}_0, \sigma_2)}$ for any $m\in {\Bbb N}.$
Now, we join the points $x_0$ and $x^{\,\prime}_0$ by a path
$\gamma$ in $D_0.$ Let $A:=\overline{B(x_0,
\sigma_1)}\cup|\gamma|\cup \overline{B(x^{\,\prime}_0, \sigma_2)},$
now $A$ is a continuum in $D_0.$ Since $D_m$ converges to $D_0$ as
its kernel, we may consider that $A\subset D_m$ for any
$m=1,2,\ldots .$ Observe that, $h(f_m(A))\geqslant h(f_m(x_m),
f_m(x^{\,\prime}_m))=h(y_m, z_m)\geqslant \delta>0.$ Now, by
Lemma~\ref{lem4} $h(f_{m}(A),
\partial f_{m}(D_{m}))\geqslant \delta_*>0$ for all $m\in {\Bbb N}$
and some $\delta_*>0.$ Besides that, by Corollary~\ref{cor1}
$h(f_{m}(A_0),
\partial f_{m}(D_{m}))\geqslant \delta_{**}>0$ for all $m\in {\Bbb N}$
and some $\delta_{**}>0.$ The latter contradicts with the
assumption~(\ref{eq2}). Thus, $h(f_m(A_0),
\partial f_m(D_m))\geqslant \delta_2$ for sufficiently large $m\in{\Bbb
N},$ as required.

\medskip Let us to prove that, the limit mapping $f$ is boundary
preserving. In other words, wee need to prove that, if $x_0\in
\partial D_0,$ then $f(x_0)\in \partial f(D_0).$
Assume the contrary: let $x_0\in \partial D_0,$ however, $f(x_0)\in
{\rm Int\,}f(D_0).$ Now, $\partial D_m\cap U\ne\varnothing$ for any
neighborhood $U$ of $x_0$ and for infinitely many $m=1,2,\ldots .$
So, we may construct a sequence $x_k\in \partial D_{m_k},$
$k=1,2,\ldots,$ such that $x_k\rightarrow x_0$ as
$k\rightarrow\infty.$ For simplicity, we may consider that $x_m\in
\partial D_m$ and $x_m\rightarrow x_0$ as
$m\rightarrow\infty.$ Since $f_m$ are closed, they are boundary
preserving (see, e.g., \cite[Theorem~3.3]{Vu}). Thus, for any $m\in
{\Bbb N}$ we may find $y_m\in D_m$ such that $|x_m-y_m|<\frac{1}{m}$
and $h(f_m(y_m), \partial f_m(D_m))<\frac{1}{m}.$ Let $h(f_m(y_m),
\partial f_m(D_m))=h(f_m(y_m), z_m).$ Let $A_0:=\{y_0\}$ and $y_0$ is some point
of $D_0$ with $f(y_0)=f(x_0).$ Now, $y_0\in D_m$ for all $m\in {\Bbb
N}$ except a finite number. By the triangle inequality and proving
above we obtain that
\begin{gather*}
h(f_m(y_0), z_m)\leqslant h(f_m(y_0), f(x_0))+ h(f(x_0),
f_m(y_m))+h(f_m(y_m), z_m)\rightarrow 0\end{gather*}
as $m\rightarrow\infty.$ The latter contradicts with the statement
that $h(f_m(A_0),
\partial f_m(D_m))\geqslant \delta_2$ for every $m\geqslant M_0,$

\medskip
Let us to prove the relation~(\ref{eq3A}). We lead the proof by the
contradiction, i.e., assume that (\ref{eq3A}) does not hold. Now,
given $k\in {\Bbb N}$ there is $m=m_k\in {\Bbb N}$ such that
$B_h(f_{m_k}(x_0), 1/k)\setminus f_{m_k}(D_{m_k})\ne \varnothing$
whenever $\overline{B(x_0, \varepsilon_0)}\subset
D_0\cap\bigcap\limits_{m=1}^{\infty}D_m.$ Let $p_k\in
B_h(f_{m_k}(x_0), 1/k)\setminus f_{m_k}(D_{m_k}).$ We join the
points $f_{m_k}(x_0)$ and $p_k$ inside the ball $B_h(f_{m_k}(x_0),
1/k)$ by a path $\gamma_k.$ Now, $|\gamma_k|\cap
f_{m_k}(D_{m_k})\ne\varnothing \ne |\gamma_k|\setminus
f_{m_k}(D_{m_k})$ and by Proposition~\ref{pr2} $|\gamma_k|\cap
\partial f_{m_k}(D_{m_k})\ne\varnothing.$ Thus, $h(f_{m_k}(x_0),
\partial f_{m_k}(D_{m_k}))\rightarrow 0$ as $k\rightarrow \infty,$ that
contradicts with $h(f_m(A_0),
\partial f_m(D_m))\geqslant \delta_2$ for every $m\geqslant M_0,$
proved above. Theorem is proved.~$\Box$

\section{The versions for prime ends}

The following statement may be proved completely similarly
to~\cite[Lemma~5.2]{Sev$_1$}, cf.~\cite[Theorem~5, Example~5]{Ad},
\cite[Lemma~5, Theorem~3]{KR}.

\medskip
\begin{lemma}\label{lem6}{\it\, Let $p\geqslant 1,$ let $D$ be a
quasiconformally regular domain, let $f:D\rightarrow \overline{{\Bbb
R}^n}$ be an open discrete and closed ring $Q$-mapping with respect
to the $p$-modulus at all points $b\in
\partial D,$ let
$f(D)=D^{\,\prime},$ and let $D^{\,\prime}$ be a domain with a
strongly accessible boundary with a respect to $p$-modulus at least
at one point $y\in C(f, b).$ Assume that, for any $b\in
\partial D$ there is $\varepsilon_0>0$ and some positive measurable
function $\psi:(0, \varepsilon_0)\rightarrow (0,\infty)$ such that
\begin{equation*}\label{eq7***_A}
0<I(\varepsilon,
\varepsilon_0)=\int\limits_{\varepsilon}^{\varepsilon_0}\psi(t)\,dt
< \infty
\end{equation*}
for any $\varepsilon\in(0, \varepsilon_0)$ and, in addition,
\begin{equation*}\label{eq5***_A}
\int\limits_{A(b, \varepsilon, \varepsilon_0)}
Q(x)\cdot\psi^{\,p}(|x-b|)
 \ dm(x) =o(I^p(\varepsilon, \varepsilon_0))\,,
\end{equation*}
where $A:=A(b, \varepsilon, \varepsilon_0)$ is defined
in~(\ref{eq1**}). Then $f$ have a continuous extension
$\overline{f}:\overline{D}_P\rightarrow \overline{D^{\,\prime}}$
such that $f(\overline{D}_P)=\overline{D^{\,\prime}}.$}
\end{lemma}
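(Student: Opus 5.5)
The plan is to reduce Lemma~\ref{lem6} to the fixed-domain situation of Lemma~\ref{lem1}, transported through the quasiconformal chart defining the prime-end topology. Since $D$ is quasiconformally regular, there is a quasiconformal mapping $g:D_0\rightarrow D$, where $D_0$ has a locally quasiconformal boundary and $\overline{D_0}$ is compact. By~\cite[Theorem~2.1, Remark~2.1]{IS} (cf.~\cite[Theorem~4.1]{Na$_1$}), every prime end $P\in E_D$ corresponds to a unique point of $\partial D_0$, and $g$ extends to a homeomorphism $\overline{D_0}\rightarrow\overline{D}_P$. It therefore suffices to show that $F:=f\circ g:D_0\rightarrow D^{\,\prime}$ has a continuous extension to $\overline{D_0}$. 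Then $\overline{f}:=F\circ g^{-1}$ is the required extension in the metric $\rho$ of~(\ref{eq5M}).

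The key step is to prove that the cluster set $C(f,P)$ is a single point for each prime end $P$. I would not transfer the ring inequality to $F$, since $Q$ would then acquire the uncontrolled distortion of $g$. Instead I would work directly in $D$ with a quasiconformally regular chain of cuts $\sigma_k\subset S(b,r_k)$, where $b\in\partial D$ is the impression point, $d(\sigma_k)\rightarrow 0$, and $d_k$ are the corresponding domains.

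\begin{itemize}
\item Suppose $C(f,P)$ contains two points $y_1\neq y_2$. Then there are sequences $x_k,x_k^{\,\prime}\in d_k$ with $f(x_k)\rightarrow y_1$ and $f(x^{\,\prime}_k)\rightarrow y_2$.
\item Join $x_k$ and $x^{\,\prime}_k$ by a path $\gamma_k$ inside $d_k$. Its image $C_k=f(|\gamma_k|)$ satisfies $h(C_k)\geqslant \frac12h(y_1,y_2)$ for large $k$.
\item By strong accessibility of $D^{\,\prime}$ at $y\in C(f,b)$, as in the proof of Lemma~\ref{lem1}, choose neighborhoods $V\subset U$ of $y$ and a compactum $F^{\,\prime}\subset D^{\,\prime}$ with $M_p(\Gamma(C_k,F^{\,\prime},D^{\,\prime}))\geqslant\delta_0>0$. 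Here $C_k$ meets $\partial U$ and $\partial V$, after choosing $y\in\{y_1,y_2\}$ suitably.
\item Lift these paths with Proposition~\ref{pr4_a}, which applies because $f$ is open, discrete and closed, to paths starting on $|\gamma_k|$ and ending in $f^{-1}(F^{\,\prime})$. Closedness makes $f^{-1}(F^{\,\prime})$ a compactum in $D$, so it lies outside $B(b,\varepsilon_0)$ for small $\varepsilon_0$. Hence the lifted paths cross $A(b,r_k,\varepsilon_0)$, because $d_k\subset B(b,r_k)$ for quasiconformally regular chains.
\item The ring inequality~(\ref{eq2*!}) at $b$, with $\eta=\psi/I(r_k,\varepsilon_0)$, gives a modulus bound $o(1)$. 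This contradicts $\delta_0$.
\end{itemize}

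This yields a limit $\overline{f}(P)$. Continuity of $\overline{f}$ on $\overline{D}_P$ then follows by the standard diagonal argument, because the $d_k$ form a neighborhood base of $P$ in the prime-end topology. For the surjectivity $f(\overline{D}_P)=\overline{D^{\,\prime}}$, I would use compactness of $\overline{D}_P$ (as $\overline{D_0}$ is compact) to get that $\overline{f}(\overline{D}_P)$ is compact and contains $D^{\,\prime}$, hence contains $\overline{D^{\,\prime}}$. The reverse inclusion follows from $C(f,\partial D)\subset\partial D^{\,\prime}$ (\cite[Theorem~3.3]{Vu}).

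The main obstacle is the geometric step that $d_k$ eventually lies in small balls around a single point $b$. This holds only for some subsequence of a regular chain, and one must check that the impression of a quasiconformally regular prime end is a point. I would first try to deduce this from $d(\sigma_k)\rightarrow0$ together with the correspondence to $\partial D_0$ in~\cite{IS}. Failing that, I would run the modulus argument with the cuts $\sigma_k$ themselves separating $|\gamma_k|$ from $f^{-1}(F^{\,\prime})$, as in~\cite[Lemma~5.2]{Sev$_1$}.
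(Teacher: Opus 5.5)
Your overall scheme is the route the paper defers to in \cite[Lemma~5.2]{Sev$_1$}. That is: show by contradiction that $C(f,P)$ is a single point, using strong accessibility of $D^{\,\prime}$, total liftings from Proposition~\ref{pr4_a}, compactness of $f^{-1}(F^{\,\prime})$ from closedness, and the ring inequality with $\eta=\psi/I$. Then continuity and $\overline{f}(\overline{D}_P)=\overline{D^{\,\prime}}$ follow from compactness of $\overline{D}_P$ and $C(f,\partial D)\subset\partial D^{\,\prime}$.

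The gap is the geometric step your main line uses. The claim ``$d_k\subset B(b,r_k)$ for quasiconformally regular chains'' is false, also along subsequences. It cannot be deduced from $d(\sigma_k)\rightarrow 0$ plus the correspondence with $\partial D_0$, because the impression of a quasiconformally regular prime end need not be a point. Take the comb $D=(0,1)\times(0,1)\setminus\bigcup_{j\geqslant 2}\{1/j\}\times(0,1/2]$. It is bounded and simply connected, hence conformally equivalent to the disk, and every prime end is given by a null chain of crosscuts. For $1/(j_k+1)<r_k<1/j_k$, with $j_k$ increasing, let $\sigma_k$ be the arc of $S((0,1/2),r_k)$ from $(0,1/2+r_k)$ through $(r_k,1/2)$ down to the tooth $\{1/(j_k+1)\}\times(0,1/2]$. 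Then $d(\sigma_k)\leqslant 2r_k\rightarrow 0$. However, each $d_k$ contains every slot $(1/(j+1),1/j)\times(0,1/2)$ with $j>j_k$. So $\overline{d_k}\supset\{0\}\times[0,1/2]$, and no $d_k$ lies in a ball of radius less than $1/4$. The point you call $b$ is only the common center $z_0$ of the spheres carrying the cuts, from \cite[Lemma~3.1]{IS}; it is not ``the impression point'', so your first proposed repair cannot succeed.

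What you list as a fallback is exactly what is needed, and it is what the paper does in (\ref{eq2A}) and (\ref{eq2A_1}); make it the main line.
\begin{itemize}
\item Since $\bigcap_k\overline{d_k}\subset\partial D$ and $f^{-1}(F^{\,\prime})$ is a compactum in $D$, there is $k_0$ with $r_{k_0}<\varepsilon_0$ and $f^{-1}(F^{\,\prime})\cap\overline{d_{k_0}}=\varnothing$.
\item For $k>k_0$, every lifted path starts on $|\gamma_k|\subset d_k\subset d_{k_0}$, stays in $D$, and ends outside $\overline{d_{k_0}}$. Hence it meets $\sigma_k\subset S(z_0,r_k)$ and $\sigma_{k_0}\subset S(z_0,r_{k_0})$.
\item By Proposition~\ref{pr2}, each such path has a subpath in $\Gamma(S(z_0,r_k),S(z_0,r_{k_0}),A(z_0,r_k,r_{k_0}))$.
\item Apply (\ref{eq2*!}) at $z_0$ with $\eta=\psi/I(r_k,r_{k_0})$. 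This still gives an $o(1)$ bound as $k\rightarrow\infty$, because the hypothesis with outer radius $\varepsilon_0$ passes to the smaller radius $r_{k_0}$. That contradicts the lower bound $\delta_0$ from strong accessibility.
\end{itemize}
Finally, if strong accessibility is assumed at only one cluster point $y$, it must lie in $C(f,P)\subset\partial D^{\,\prime}$. Start from $y_1=y$ and let $y_2\neq y$ be an arbitrary second cluster point, rather than ``choosing $y\in\{y_1,y_2\}$ suitably''.
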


\medskip
The following statement holds, cf.~\cite{SevSkv}.

\medskip
\begin{lemma}\label{lem2_A}
{\it\, The statement of Theorem~\ref{th4} remains true, if, under
assumptions of Theorem~\ref{th4}, instead of the assumption~4) we
require the following: there exists
$\varepsilon_0=\varepsilon_0(x_0)>0$ and a Lebesgue measurable
function $\psi:(0, \varepsilon_0)\rightarrow [0,\infty]$ such
that~(\ref{eq7***_2}) holds while the relation~(\ref{eq3.7.2_2})
holds as $\varepsilon\rightarrow 0,$ where $A(x_0, \varepsilon,
\varepsilon_0)$ is defined~in (\ref{eq1**}).}
\end{lemma}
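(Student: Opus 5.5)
We will rerun the proof of Theorem~\ref{th2}, adapted to prime ends, with the concrete function $\psi$ of assumption (\ref{eq7***_2})--(\ref{eq3.7.2_2}) replacing the appeal to Proposition~\ref{pr1}. The first step is a prime-end analogue of Lemma~\ref{lem2}. Fix $P_0\in \overline{D_0}_P$ and suppose equicontinuity fails at $P_0$. Then there are $a>0$ and points $x_m, x^{\,\prime}_m\in D_m$ with $\rho(x_m, P_0), \rho(x^{\,\prime}_m,P_0)\to0$ and $h(f_m(x_m),f_m(x^{\,\prime}_m))\geqslant a$.

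If the indices $m$ stay bounded, the contradiction comes from Lemma~\ref{lem6} applied to a single $f_m$. That lemma gives a continuous extension of $f_m$ to $\overline{D_m}_P$. Here condition 2) makes $D_m$ quasiconformally regular, and equi-uniformity makes $f_m(D_m)$ strongly accessible (Remark~\ref{rem1}). So we may assume $m$ increases.

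If $P_0\in D_0$, we argue as in case 1) of Theorem~\ref{th2}: by the kernel property all $f_m$ with large $m$ are defined on a fixed ball around $P_0$, and the interior equicontinuity results quoted there apply. Those results need only (\ref{eq3.7.2_2}) at inner points.

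If $P_0\in E_{D_0}$, we use regularity in terms of prime ends. Take the chain $\sigma_k$ given by condition 1) and its domains $d_k$. Since $D_0$ is quasiconformally regular, we may take the chain quasiconformally regular, so $\sigma_k\subset S(b,r_k)$ for the impression point $b$ with $r_k\to0$ (after passing to the model with locally quasiconformal boundary). By condition 1), $d_k\cap D_m$ is connected for $m\geqslant M(k)$. Moreover $x_m, x^{\,\prime}_m\in d_k$ for large $m$, because $\rho$-convergence to $P_0$ means eventually entering each $d_k$.

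We then pass to a diagonal subsequence exactly as in Lemma~\ref{lem2}. This lets us join $x_m$ and $x^{\,\prime}_m$ by a path $\gamma_m$ in $d_{k_m}\cap D_m$, so $|\gamma_m|\subset B(b,r_{k_m})$. The image $C_m=f_m(|\gamma_m|)$ then has $h(C_m)\geqslant a$. The continuum $A$ lies in $D_0$, hence outside $\overline{B(b,\varepsilon_0)}$ for small $\varepsilon_0$. Since $\psi$ is normalized on $(r_{k_m},\varepsilon_0)$, (\ref{eq2*!}) at $b$ together with (\ref{eq3.7.2_2}) gives
$$M_p(f_m(\Gamma(|\gamma_m|,A,D_m)))\to0 .$$
On the other hand, equi-uniformity of $f_m(D_m)$ with $h(f_m(A))\geqslant\delta$ bounds the same modulus from below. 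This contradiction proves equicontinuity at each $P_0$.

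The main obstacle is upgrading this pointwise statement to \emph{uniform} equicontinuity with respect to $\rho$. I would argue by contradiction with sequences $x_m,y_m$ satisfying $\rho(x_m,y_m)\to0$. By compactness of $\overline{D_0}_P$ (the closure of a domain with locally quasiconformal boundary is compact), both sequences converge to a common $P_0$, and the previous step contradicts this. The delicate point is that the $x_m$ live in $D_m\subset D_0$, so $\rho$ is the metric of $D_0$ and the limit is taken in $\overline{D_0}_P$. This is consistent because $D_m\subset D_0$.

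Items \textbf{II} and \textbf{III} then follow verbatim from the proof of Theorem~\ref{th2}. We use the Arzel\`a--Ascoli theorem on compacta of $D_0$, and the Cauchy criterion in the $\rho$-metric to extend $f$ to $\overline{D_0}_P$. Injectivity of the limit uses the interior results cited there, and the estimate for $h(f_m(A_0),\partial f_m(D_m))$ uses Proposition~\ref{pr6}. For boundary preservation, $f(P_0)\in\partial f(D_0)$, we use that $f$ is a homeomorphism of $D_0$ and that $\rho$-convergence to a prime end forces Euclidean convergence to $\partial D_0$.
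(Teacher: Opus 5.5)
Your overall scheme is the paper's: contradiction, Lemma~\ref{lem6} for a bounded index, the interior results for $P_0\in D_0$, a path $\gamma_m$ in a chain domain, and the $\psi$-estimate from (\ref{eq2*!}) played against equi-uniformity. But the case $P_0\in E_{D_0}$ has a genuine gap. There you merge two different chains and then reuse the ``small ball'' mechanism of Lemma~\ref{lem2}, which does not survive the passage to prime ends.

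\textbf{The chain.} The chain supplied by hypothesis~1) cannot be assumed to lie on spheres $S(b,r_k)$. Quasiconformal regularity only gives chains with disjoint closures and $d(\sigma_k)\to 0$. Connectivity of $d_k\cap D_m$ need not survive replacing a chain by an equivalent one. ``Passing to the model'' does not help either, since (\ref{eq2*!}) and (\ref{eq3.7.2_2}) refer to Euclidean spheres in the original coordinates.

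\textbf{The ball inclusion.} Even when $\sigma_k\subset S(b,r_k)$, the inclusion $|\gamma_m|\subset B(b,r_{k_m})$ fails in general. The reason is that $d_k$ is a component of $D_0\setminus\sigma_k$, which may leave the ball through other components of $D_0\cap S(b,r_k)$; also, the impression of $P_0$ need not be a point. For example, take the comb $\{x+iy:0<x,y<1\}\setminus\bigcup_{j\geqslant 2}\{1/j+iy:0<y\leqslant 1/2\}$, which is quasiconformally regular. For $k\geqslant 3$, the arc of $S(i/2,1/k)$ in $\{y>1/2\}$ joining $i(1/2+1/k)$ to the tooth tip $1/k+i/2$ cuts off a domain containing all slots left of $x=1/k$, down to $y=0$. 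Points converging to that prime end can therefore stay at distance $1/4$ from $i/2$.

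\textbf{How the paper avoids both problems.} It takes a spherical chain $\sigma_k\subset S(z_0,r_k)$ in $D_0$ itself \cite[Lemma~3.1]{IS}. It uses equivalence with the regularity chain to arrange $d^{\,\prime}_k\subset d_k$, and joins $x_m,x^{\,\prime}_m$ inside $d^{\,\prime}_m\cap D_m$. It then replaces the ball inclusion by a crossing argument. Once $A\cap d_1=\varnothing$, Proposition~\ref{pr2} forces every path from $|\gamma_m|$ to $A$ to meet both $\sigma_m$ and $\sigma_1$. Hence $\Gamma(|\gamma_m|,A,D_m)>\Gamma(S(z_0,r_m),S(z_0,r_1),D_m)$, and (\ref{eq2*!}) with $\eta=\psi/I(r_m,r_1)$ gives the decay.

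\textbf{The bounded-index step.} Separately, this step is not sound, and the paper treats it the same way. Lemma~\ref{lem6} gives continuity of $f_{m_0}$ on $\overline{D_{m_0}}_P$. Your points are only $\rho$-close in $\overline{D_0}_P$, and they may approach different prime ends of $D_{m_0}$. Concretely, take:
\begin{itemize}
\item $D_0={\Bbb D}$ and $Q\equiv 1$;
\item $D_m=B(0,m/(m+1))$ for $m\ne m_0$;
\item $D_{m_0}$ equal to this disk slit along $[1/2,m_0/(m_0+1))$, with $m_0\geqslant 2$;
\item $f_m$ conformal from $D_m$ onto ${\Bbb D}$ with $f_m(0)=0$.
\end{itemize}
All hypotheses hold. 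Yet for $t$ inside the slit, the points $t\pm is$ with $s\to 0^+$ are $\rho$-close while their $f_{m_0}$-images stay apart. So item~\textbf{I} is only available for $m\geqslant M(\varepsilon)$, which is exactly what the increasing-index argument yields.
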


\medskip
\begin{proof}
\textbf{I.} Assume the contrary. Then there exists $a>0$ such that
for $\delta=1/k,$ $k=1,2,\ldots,$ there are $m_k\in {\Bbb N}$ and
$x_k, x^{\,\prime}_k\in D_{m_k}$ such that $\rho(x_k,
x^{\,\prime}_k)<1/k,$ however, $h(f_{m_k}(x_k),
f_{m_k}(x^{\,\prime}_k))\geqslant a.$ By the definition,
$\overline{D_0}_P$ is a compact space, so that we may consider that
the sequences $x_k, x^{\,\prime}_k\rightarrow P_0\in
\overline{D_0}_P$ by the metrics $\rho.$ We may consider that the
sequence $m_k$ is increasing by $k.$ Otherwise, $m_k=k_0$ for
sufficiently large $k\in{\Bbb N}$ and some $k_0\in {\Bbb N},$
besides that, $P_0\in \overline{D_{k_0}}_P$ because $f_{k_0}$ is
defined at $x_k\cap D_{k_0}$ and $x_k\rightarrow P_0$ as
$k\rightarrow\infty.$ Now, from the assumption above we have that
$h(f_{k_0}(x_k), f_{k_0}(x^{\,\prime}_k))\geqslant a.$ The latter
contradicts with Lemma~\ref{lem6}. So, $m_k$ may be chosen as an
increasing sequence. Without loss of generality, going to
renumbering, if required, we may consider that the relation
$h(f_{m_k}(x_k), f_{m_k}(x^{\,\prime}_k))\geqslant a$ holds for $m$
instead of $m_k,$ i.e., $x_m, x^{\,\prime}_m\in D_m$ and
\begin{equation}\label{eq6***_A}
h(f_m(x_m), f_m(x^{\,\prime}_m))\geqslant a/2\qquad \forall\,\,m\in
{\Bbb N}\,.
\end{equation}
If $P_0\in D_0,$ then all mappings $f_m$ except of finite numbers
are defined in some ball $B(P_0, \varepsilon_0)\subset D_0\cap D_m,$
$m\geqslant M_0.$ Now, $f_m$ is equicontinuous at $P_0$ (see
\cite[Lemma~4.9]{RS$_1$} for $p=n$ and \cite[Lemma~4.3]{Sev$_1$} for
$n-1<p<n.$ Therefore, we may consider that $P_0\in
E_{D_0}=\overline{D_0}_P\setminus D_0.$

\medskip
Let $d_m$ and $\sigma_m,$ $m=0,1,2,\ldots, $ be sequences of domains
and cuts in $P_0,$ respectively, while $\sigma_m$ lie on the spheres
$S(z_0, r_m)$ centered at some point $z_0\in
\partial D_0,$ where $r_m\rightarrow 0$ as $m\rightarrow\infty$
(such a sequence $\sigma_m$ exists by~\cite[Lemma~3.1]{IS},
cf.~\cite{KR}). Since $D_m$ is a regular sequence of domains with a
respect to $D_0$ in the terms of prime ends, there exists a sequence
of cuts $\varsigma_{m}$ and domains $d^{\,\prime}_m,$ equivalent to
$\sigma_m$ and $d_m,$ respectively, such that the following
condition holds: given $d^{\,\prime}_k,$ $k=1,2,\ldots,$ there is
$M_1=M_1(k)$ such that $d^{\,\prime}_k\cap D_m$ is a non-empty
connected set for every $m\geqslant M_1(k).$ Now, we will consider
that $d^{\,\prime}_k\subset d_k$ for any $k=1,2,\ldots .$

\medskip
Thus, given a domain $d^{\,\prime}_1$ there is a number $M(1)\in
{\Bbb N}$ such that $D_m\cap d^{\,\prime}_1$ is connected for
$m\geqslant M(1).$ Since $x_m, x_m^{\,\prime}\rightarrow P_0$ as
$m\rightarrow \infty,$ there is $m_1\geqslant M(1)$ such that
$x_{m_1}, x^{\,\prime}_{m_1}\in d^{\,\prime}_1\cap D_{m_1}.$ Follow,
given a domain $d^{\,\prime}_2$ there is a number $M(2)\in {\Bbb N}$
such that $D_m\cap d^{\,\prime}_2$ is connected for $m\geqslant
M(2).$ Since $x_m, x_m^{\,\prime}\rightarrow P_0$ as $m\rightarrow
\infty,$ there is $m_2\geqslant M(2)$ such that $x_{m_2},
x^{\,\prime}_{m_2}\in d^{\,\prime}_2\cap D_{m_2}.$ Etc. In general,
given $k\in {\Bbb N}$ and a domain $d^{\,\prime}_k$ there is a
number $M(k)\in {\Bbb N}$ such that $D_m\cap d^{\,\prime}_k$ is
connected for $m\geqslant M(k).$ Since $x_m,
x_m^{\,\prime}\rightarrow P_0$ as $m\rightarrow \infty,$ there is
$m_k\geqslant M(k)$ such that $x_{m_k}, x^{\,\prime}_{m_k}\in
d^{\,\prime}_k\cap D_{m_k}.$ Relabeling, if required, we may
consider that the same sequences $x_m$ and $x^{\,\prime}_m$ satisfy
the above conditions, i.e., $x_{m}, x^{\,\prime}_{m}\in
d^{\,\prime}_m\cap D_{m}$ while $d^{\,\prime}_m\cap D_{m}$ is
connected. We join the points $x_m$ and $x^{\,\prime}_m$ by a path
$\gamma_m:[0,1]\rightarrow D_m\cap d^{\,\prime}_m$ such that
$\gamma_m(0)=x_m,$ $\gamma_m(1)=x^{\,\prime}_m$ and $\gamma_m(t)\in
D_m\cap d^{\,\prime}_m$ for $t\in (0,1),$ see~Figure~\ref{fig6_2}.
\begin{figure}[h]
\centerline{\includegraphics[scale=0.5]{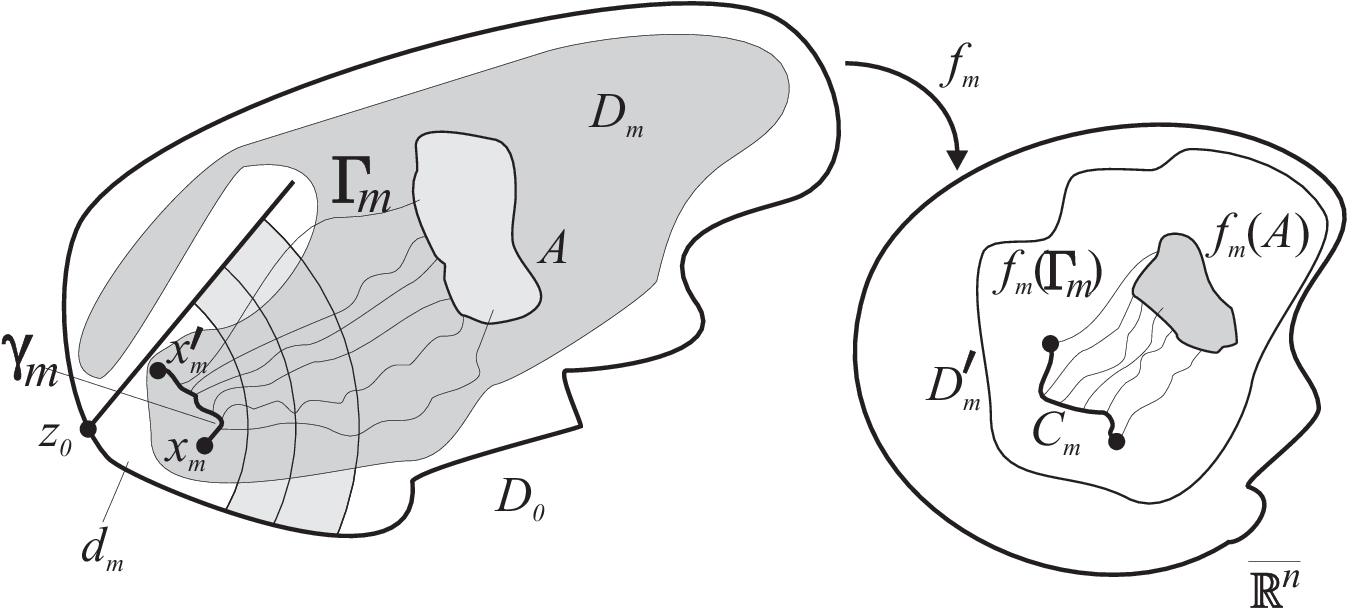}} \caption{To
prove Lemma~\ref{lem1}}\label{fig6_2}
\end{figure}
Denote by $C_m$ the image of the path $\gamma_m(t)$ under $f_m.$ It
follows from relation (\ref{eq6***_A}) that
\begin{equation}\label{eq5.1_A}
h(C_m)\geqslant a/2\qquad\forall\, m\in {\Bbb N}\,,
\end{equation}
where $h$ denotes the chordal diameter of the set.

\medskip
Let $\Gamma_m$ be a family of paths joining $\gamma_m$ and $A$ in
$D_m.$ Observe that, $A\subset D_0\setminus d_m$ for sufficiently
large $m\in {\Bbb N}.$ We may consider that the latter holds for all
$m\in {\Bbb N}.$ Now, by Proposition~\ref{pr2}
\begin{equation}\label{eq2A}
\Gamma_m>\Gamma(S(z_0, r_m), S(z_0, r_1), D_0)\,.
\end{equation}
By the definition of the class $\frak{F}_{Q, A, p, \delta}(D_0,
\frak{D}),$ the relations~(\ref{eq2*!})--(\ref{eq8B_2}) hold at
$z_0.$ Therefore, by~(\ref{eq2A})
\begin{equation}\label{eq10_2_A}
M_p(f_m(\Gamma_m))\leqslant \int\limits_{A(z_0, r_m, r_1)} Q(x)\cdot
\eta^p(|x-z_0|)\ dm(x)
\end{equation}
for every Lebesgue measurable function $\eta:(r_m, r_1)\rightarrow
[0,\infty ]$ such that $\int\limits_{r_m}^{r_1}\eta(r)\,dr \geqslant
1.$
Observe that, the function
$$\eta(t)=\left\{
\begin{array}{rr}
\psi(t)/I(r_m, r_1), & t\in (r_m,
r_1),\\
0, & t\in {\Bbb R}\setminus (r_m, r_1)\,,
\end{array}
\right. $$ where $I(r_m, r_1):=\int\limits_{r_m}^{r_1}\psi(t)\,dt.$
Therefore, it follows from~(\ref{eq3.7.2_2}) and (\ref{eq10_2_A})
that
\begin{equation}\label{eq11_2_A}
M_p(f_m(\Gamma_m))\leqslant \alpha(r_m)\rightarrow 0
\end{equation}
as $m\rightarrow \infty,$ where $\alpha(\varepsilon)$ is some
nonnegative function tending to zero as $\varepsilon\rightarrow 0,$
which exists due to the condition~(\ref{eq3.7.2_2}).

\medskip
On the other hand, observe that $f_m(\Gamma_m)=\Gamma(C_m, f_m(A_m),
D_m^{\,\prime}).$ By the hypothesis of the lemma,
$h(f_m(A_m))\geqslant \delta$ for all $m\in {\Bbb N}.$ Therefore,
due to (\ref{eq5.1_A}), $h(f_m(A_m))\geqslant \delta_1$ and
$h(C_m)\geqslant\delta_1,$ where $\delta_1:=\min\{\delta, a/2\}.$
Using the fact that the domains $D_m^{\,\prime}:=f_m(D_m)$ are
equi-uniform with respect to the $p$-modulus, we conclude that there
exists $\sigma>0$ such that
$$M_p(f_m(\Gamma_m))=M_p(\Gamma(C_m, f_m(A_m),
D_m^{\,\prime}))\geqslant \sigma\qquad\forall\,\, m\in {\Bbb N}\,,$$
which contradicts the condition~(\ref{eq11_2_A}). The obtained
contradiction indicates that the assumption of the absence of
equicontinuity of the family $\frak{F}_{Q, A, p, \delta}(D_0,
\frak{D})$ at $z_0$ was incorrect. The obtained contradiction
completes the proof of the uniform equicontinuity of $f_m,$
$m=1,2,\ldots,$ in $\frak{D}$ in terms of prime ends.

\medskip
The statement of Lemma~\ref{lem2_A} concerning item~\textbf{I} is
established. The items~\textbf{II} and~\textbf{III} may be
considered similarly to how it was done under the proof of
Theorem~\ref{th2}.~$\Box$
\end{proof}

\medskip
Now, we give the version of Lemma~\ref{lem2_A} for mappings with a
branching.

\medskip
\begin{lemma}\label{lem7}
{\it\, The statement of Theorem~\ref{th5} remains true, if, under
assumptions of this theorem, instead of the assumption~4) mentioned
here we require the following: there exists
$\varepsilon_0=\varepsilon_0(x_0)>0$ and a Lebesgue measurable
function $\psi:(0, \varepsilon_0)\rightarrow [0,\infty]$ such
that~(\ref{eq7***_2}) holds while the relation~(\ref{eq3.7.2_2}) is
satisfied as $\varepsilon\rightarrow 0,$ where $A(x_0, \varepsilon,
\varepsilon_0)$ is defined~in (\ref{eq1**}).}
\end{lemma}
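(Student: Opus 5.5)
The plan is to follow the proof of Lemma~\ref{lem2_A} for item~\textbf{I}, but to replace the path-family argument used for homeomorphisms with the lifting argument of Lemma~\ref{lem3}. Items~\textbf{II} are then transferred from the proof of Theorem~\ref{th4_4}.

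\textbf{Item I.} Suppose uniform equicontinuity in terms of prime ends fails. Then there are $a>0$, indices $m_k$, and points $x_k,x_k^{\,\prime}\in D_{m_k}$ with $\rho(x_k,x_k^{\,\prime})<1/k$ and $h(f_{m_k}(x_k),f_{m_k}(x_k^{\,\prime}))\geqslant a$.

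\begin{itemize}
\item By compactness of $\overline{D_0}_P$, both sequences converge to some $P_0$.
\item If $m_k$ is eventually constant, Lemma~\ref{lem6} gives a contradiction, since it supplies a continuous extension of a single $f_{k_0}$ to $\overline{D_{k_0}}_P$. So we may assume $m_k$ is increasing and relabel so that (\ref{eq6***_A}) holds.
\item If $P_0\in D_0$, local equicontinuity of open discrete ring $Q$-mappings omitting $E$ applies. This is where we use that $E$ has positive capacity when $p=n$, or is closed when $n-1<p<n$.
\item Otherwise $P_0\in E_{D_0}$. Choose cuts $\sigma_m\subset S(z_0,r_m)$ with $r_m\to0$, and use prime-end regularity to get domains $d^{\,\prime}_m\subset d_m$ with $d^{\,\prime}_m\cap D_m$ connected.
\item Pass to a diagonal subsequence, exactly as in Lemma~\ref{lem2_A}, so that $x_m,x_m^{\,\prime}\in d^{\,\prime}_m\cap D_m$. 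Join them by a path $\gamma_m$ there, and set $C_m=f_m(|\gamma_m|)$, so that $h(C_m)\geqslant a/2$.
\end{itemize}

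\textbf{Contradiction via liftings.} Let $K_m$ be the continuum from the definition of $\frak{R}_{Q,\delta,p,E}$. Equi-uniformity of the domains $f_m(D_m)$ gives $M_p(\Gamma(K_m,C_m,f_m(D_m)))\geqslant b>0$. As in Lemma~\ref{lem3}, take total $f_m$-liftings starting on $|\gamma_m|$ (Proposition~\ref{pr4_a}). Because $f_m$ is closed, these liftings tend to $f_m^{-1}(K_m)$.

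The key geometric point is to place $f_m^{-1}(K_m)$ outside $d_m$ for all large $m$. Since $h(f_m^{-1}(K_m),\partial D_m)\geqslant\delta$ and $D_m\subset D_0$, Proposition~\ref{pr6} gives $h(f_m^{-1}(K_m),\partial D_0)\geqslant\delta$. Hence all these preimages lie in a fixed compactum $\{x\in D_0: h(x,\partial D_0)\geqslant\delta\}$. Because $d_m$ shrinks to the prime end $P_0$ (its impression lies in $\partial D_0$), that compactum is disjoint from $d_m$ for large $m$.

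Proposition~\ref{pr2} then shows that every lifting crosses the ring $A(z_0,r_m,r_1)$. So the lifted family is minorized by $\Gamma(S(z_0,r_m),S(z_0,r_1),D_m)$. Applying (\ref{eq2*!}) at $z_0$ with $\eta=\psi/I(r_m,r_1)$ and using (\ref{eq3.7.2_2}) gives
\[
M_p(\Gamma(K_m,C_m,f_m(D_m)))\leqslant\alpha(r_m)\to0,
\]
which contradicts the lower bound $b$. The remaining assertions of item~\textbf{I} follow as in Theorem~\ref{th2}, items~\textbf{II}--\textbf{III}:
\begin{itemize}
\item Arzel\`a--Ascoli gives a locally uniformly convergent subsequence;
\item the Cauchy criterion in $\rho$ gives the continuous extension to $\overline{D_0}_P$;
\item a diagonal and triangle-inequality argument gives $f_m(x_m)\to f(P_0)$.
\end{itemize}

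\textbf{Item II.} This repeats the proof of Theorem~\ref{th4_4}, item~\textbf{II}, essentially verbatim.
\begin{itemize}
\item The distance bound $h(f_m(A_0),\partial f_m(D_m))\geqslant\delta_1$ comes from Lemma~\ref{lem4} and Corollary~\ref{cor1}. These require only $p=n$, condition~5), and a continuum $A$ built from limit points of $f_m^{-1}(K_m)$ lying in $D_0$.
\item Boundary preservation uses boundary points $x_m\in\partial D_m$ approaching the impression of $P_0$. The argument is as before, now using convergence in $\rho$.
\item Relation (\ref{eq3A}) follows from the distance bound together with Proposition~\ref{pr2}.
\end{itemize}

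The main obstacle is the placement of $f_m^{-1}(K_m)$ outside $d_m$ uniformly in $m$. That step needs the observation that $\delta$-interior compacta of $D_0$ are eventually disjoint from the domains $d_m$ of a chain of $P_0$. I would justify it from $\bigcap_m\overline{d_m}\subset\partial D_0$.
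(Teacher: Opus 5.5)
Your proposal follows the paper's proof essentially step for step: the contradiction setup, the diagonal selection inside $d^{\,\prime}_m\cap D_m$, total $f_m$-liftings whose ends are kept away from $\partial D_0$ by Proposition~\ref{pr6}, the ring estimate at $z_0$ with $\eta=\psi/I(r_m,r_1)$, and item~\textbf{II} transferred from Theorem~\ref{th4_4}; your justification via $\bigcap_m\overline{d_m}\subset\partial D_0$ (after renumbering the chain so the preimages avoid $d_1$, which the ring $A(z_0,r_m,r_1)$ requires) supplies what the paper only asserts as ``we may consider $f_m^{-1}(K_m)\subset D_0\setminus d^{\,\prime}_1$''. One caveat, shared with the paper: Lemma~\ref{lem4} formally assumes FMO or (\ref{eq2_4}) on $A$, which is exactly the hypothesis this lemma drops, so for item~\textbf{II} you should note that its proof still works once the finite ring bounds around points of $A$ are taken from $\psi$ in (\ref{eq7***_2})--(\ref{eq3.7.2_2}) instead of from (\ref{eq2_4}).
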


\medskip
\begin{proof} \textbf{I.} The proof is much the same as that
of Lemma~\ref{lem2_A}, so we will limit ourselves to a schematic
proof.

\medskip
Assume the contrary. Then there exists $a>0$ such that for
$\delta=1/k,$ $k=1,2,\ldots,$ there are $m_k\in {\Bbb N}$ and $x_k,
x^{\,\prime}_k\in D_{m_k}$ such that $\rho(x_k,
x^{\,\prime}_k)<1/k,$ however, $h(f_{m_k}(x_k),
f_{m_k}(x^{\,\prime}_k))\geqslant a.$ By the definition,
$\overline{D_0}_P$ is a compact space, so that we may consider that
the sequences $x_k, x^{\,\prime}_k\rightarrow P_0\in
\overline{D_0}_P$ by the metrics $\rho.$ Arguing similarly to the
proof of Lemma~\ref{lem2_A}, we may consider that~(\ref{eq6***_A})
holds. If $P_0\in D_0$, then $f_m$ is equicontinuous at $P_0$ (see
\cite[Lemma~4.9]{RS$_1$} for $p=n$ and \cite[Lemma~4.3]{Sev$_1$} for
$n-1<p<n.$ Now, we may consider that $P_0\in
E_{D_0}=\overline{D_0}_P\setminus D_0.$ Under notations of
Lemma~\ref{lem2_A} we may consider that $x_{m}, x^{\,\prime}_{m}\in
d^{\,\prime}_m\cap D_{m}$ while $d^{\,\prime}_m\cap D_{m}$ is
connected. We join the points $x_m$ and $x^{\,\prime}_m$ by a path
$\gamma_m:[0,1]\rightarrow D_m\cap d^{\,\prime}_m$ such that
$\gamma_m(0)=x_m,$ $\gamma_m(1)=x^{\,\prime}_m$ and $\gamma_m(t)\in
D_m\cap d^{\,\prime}_m$ for $t\in (0,1).$ Set
$C_m:=|f_m(\gamma_m)|.$ Now, $h(C_m)\geqslant a/2$ for sufficiently
large $m\in {\Bbb N}.$

\medskip
By the definition of $\frak{R}_{Q, \delta, p, E}(D_0, \frak{D}),$
for any $f_m\in\frak{R}_{Q, \delta, p, E}(D_0, \frak{D})$ and any
domain $D^{\,\prime}_m:=f_m(D_m)$ there exists a continuum
$K_m\subset D^{\,\prime}_m$ such that $h(K_m)\geqslant \delta$ and
$h(f_m^{\,-1}(K_m),
\partial D_m)\geqslant \delta>0.$
Since by the assumption the domains $D^{\,\prime}_m$ form the
equi-uniform family with respect to $p$-modulus and $h(C_m)\geqslant
a/2$ we obtain that
\begin{equation}\label{eq13_5}
M_p(\Gamma(K_m, C_m, D^{\,\prime}_m))\geqslant b\,,
\end{equation}
$m=1,2,\ldots ,$ for some $b>0.$
Consider the family $\Gamma_m,$ consisting of all paths $\beta:[0,
1)\rightarrow D^{\,\prime}_m,$ where $\beta(0)\in C_m$ and
$\beta(t)\rightarrow p\in K_m$ as $t\rightarrow 1.$ Let $\Gamma^*_m$
be a family of all total $f_m$-liftings $\alpha:[0, 1)\rightarrow D$
of the family $\Gamma_m$ with origin at $|\gamma_m|$ (see
Proposition~\ref{pr4_a}). Since $f_m$ is closed,
$\alpha(t)\rightarrow f^{\,-1}_m(K_m),$ where $f^{\,-1}_m(K_m)$ is
the total preimage of the continuum $K_m$ under the mapping $f_m.$
Since $h(f_m^{\,-1}(K_m),
\partial D_m)\geqslant \delta>0$ and $D_m\subset D_0,$ by
Proposition~\ref{pr6}, $h(f_m^{\,-1}(K_m),
\partial D_0)\geqslant \delta>0,$ as well. Now,
we may consider that $f_m^{\,-1}(K_m)\subset D_0\setminus
d^{\,\prime}_1.$ In this case, by Proposition~\ref{pr2}
\begin{equation}\label{eq2A_1}
\Gamma^*_m>\Gamma(S(z_0, r_m), S(z_0, r_1), D_0)\,.
\end{equation}
It follows by~(\ref{eq2A_1}) and~(\ref{eq2*!}) that,
\begin{gather}M_p(f_m(\Gamma_m^*))\leqslant M_p(\Gamma(S(z_0, r_m), S(z_0, r_1),
D_0))\nonumber \\
\label{eq10AB} \leqslant M_p(f_m(\Gamma(|\gamma_m|, f^{\,-1}_m(K_m),
D_m)))\leqslant \int\limits_{A(x_0, r_m, r_1)} Q(x)\cdot
\eta^p(|x-z_0|)\ dm(x)
\end{gather}
for each measurable function $\eta:(r_m, r_1)\rightarrow [0,\infty
],$ such that $\int\limits_{r_1}^{r_m}\eta(r)\,dr \geqslant 1.$
Again, arguing similarly to the proof of Lemma~\ref{lem2_A}, we
obtain from~(\ref{eq10AB}) that
\begin{equation}\label{eq11B_2}
M_p(f_m(\Gamma^{\,*}_m))\leqslant \alpha(r_m)\rightarrow 0
\end{equation}
as $m\rightarrow \infty,$ where $\alpha(\varepsilon)$ is some
nonnegative function tending to zero as $\varepsilon\rightarrow 0$
(which exists by the condition (\ref{eq3.7.2_2})). In addition,
$f_m(\Gamma^{\,*}_m)= \Gamma_m$ and $M_p(\Gamma_m)=M_p(\Gamma(K_m,
C_m, D^{\,\prime}_m)).$ Thus
\begin{equation}\label{eq12_5}
M_p(f_m(\Gamma^{\,*}_m))=M_p(\Gamma(K_m, C_m, D^{\,\prime}_m))\,.
\end{equation}
However, relations (\ref{eq11B_2}) and (\ref{eq12_5}) contradict
with~(\ref{eq13_5}). The resulting contradiction indicates that the
assumption (\ref{eq6***_A}) was incorrect, and, therefore, the
family $\frak{R}_{Q, \delta, p, E}(D_0, \frak{D})$ is equicontinuous
at $P_0\in E_{D_0}.$

\medskip
The statement of Lemma~\ref{lem7} concerning item~\textbf{I} is
established. The item~\textbf{II} may be considered similarly to how
it was done under the proof of Theorem~\ref{th4_4}.~$\Box$
\end{proof}

\medskip
{\it Proof of Theorem~\ref{th4}} follows by Lemma~\ref{lem2_A} and
Proposition~\ref{pr1}.~$\Box$

\medskip
{\it Proof of Theorem~\ref{th5}} follows by Lemma~\ref{lem7} and
Proposition~\ref{pr1}.~$\Box$

\medskip
\begin{example}\label{ex2}
Let $D_0$ be the unit disk ${\Bbb D}$ in ${\Bbb C}$ with a cut along
the segment $[0, 1]$, i.e., $D_0={\Bbb D}\setminus I \subset {\Bbb
C}$, where ${\Bbb D}:=\{z\in {\Bbb C}\,:\, |z|<1\}$, $I:=\{z=x+iy\in
{\Bbb C}\,:\, y=0, 0\leqslant  x\leqslant  1\}$. Let $f_m$ be a
conformal mapping of $D_m=\{z\in {\Bbb C}\,:\,
|z|<m/(m+1)\}\setminus I_m,$ $I_m:=\{z=x+iy\in {\Bbb C}\,:\, y=0,
0\leqslant  x\leqslant m/(m+1)\},$ onto ${\Bbb D}.$ For instance, we
may take
$$f_m(z)=\frac{\left(\frac{\sqrt{zr_m}+1}{\sqrt{zr_m}-1}\right)^2-i}
{\left(\frac{\sqrt{zr_m}+1}{\sqrt{zr_m}-1}\right)^2-i}\,,\quad
z=x+iy\,,\quad i^2=-1\,,$$
where $\sqrt{z}$ denotes the main branch of the square root and
$r_m=(m+1)/m.$ We verify conditions of Theorem~\ref{th4}. First of
all, $f_m$ are homeomorphisms of $D_m$ onto ${\Bbb D}$
satisfying~(\ref{eq2*!})--(\ref{eq8B_2}) at any point of
$\overline{D_0}=\overline{\Bbb D}$ with $Q\equiv 1$ and $p=n=2$ as
conformal mappings (see \cite[Theorem~8.1]{Va}). Since this sequence
(obviously) converges to the conformal mapping
$f(z)=\frac{\left(\frac{\sqrt{z}+1}{\sqrt{z}-1}\right)^2-i}
{\left(\frac{\sqrt{z}+1}{\sqrt{z}-1}\right)^2-i}$ locally uniformly
in $D_0,$ the relation $h(f_m(A))\geqslant\delta$ obviously holds
for any $m=1,2,\ldots ,$ any continuum $A$ in
$D_0\cap\bigcap\limits_{m=1}^{\infty}D_m$ and some $\delta>0.$
Besides that, the relation $h(\overline{{\Bbb R}^n}\setminus
f_m(D_m))=h(\overline{{\Bbb R}^n}\setminus {\Bbb D})\geqslant
\delta$ holds, for in0stance, with $\delta=1.$ Observe that $D_m,$
$m=1,2,\ldots ,$ converge to $D_0={\Bbb D}\setminus I$ as its
kernel, see Figure~\ref{fig2}.
\begin{figure}
  \centering\includegraphics[scale=0.5]{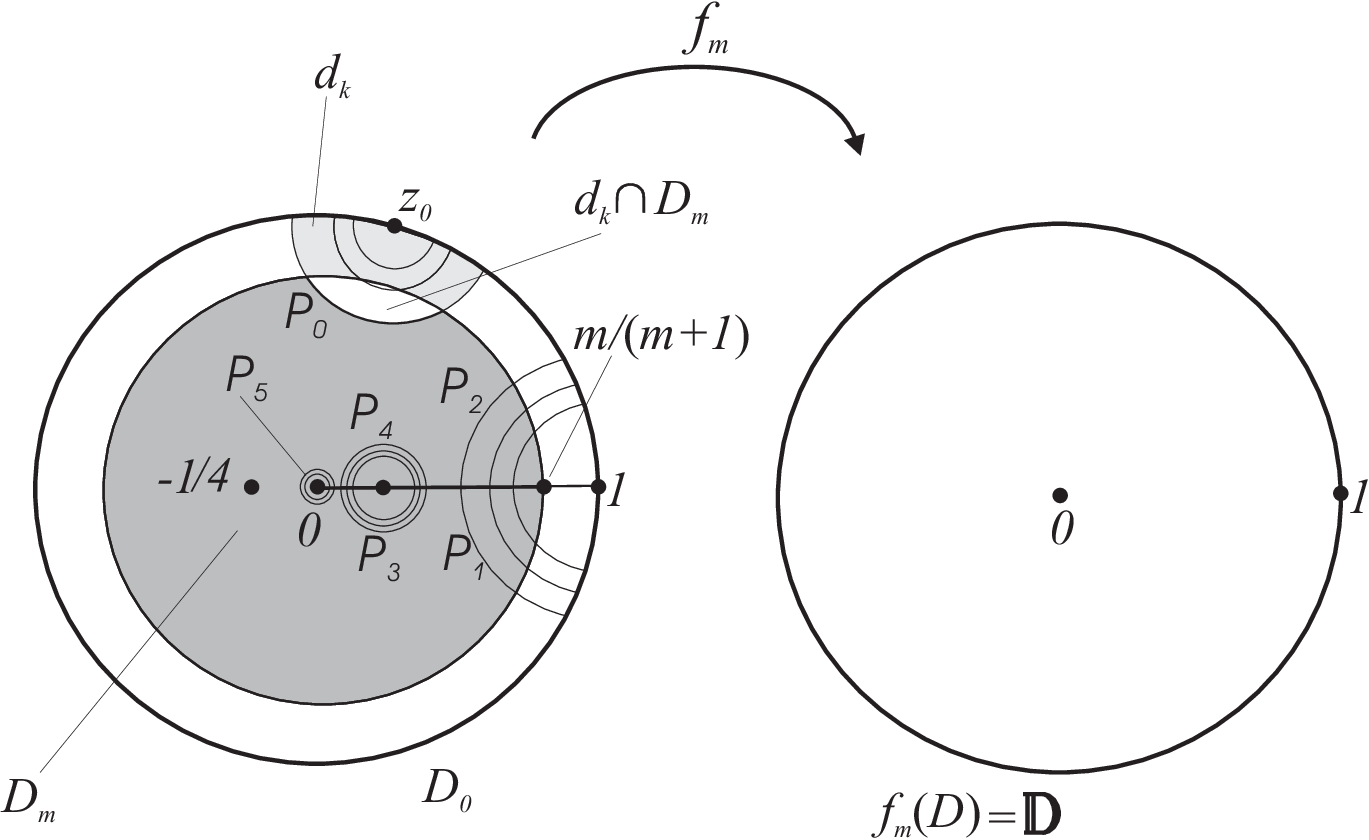}
  \caption{Illustration for Example~\ref{ex2}.}\label{fig2}
 \end{figure}
Besides that, the domains $d_k,$ formed by chains of cuts lying on
the circles centered at some point $z_0\in
\partial {\Bbb D}\cup I$ form a connected intersection with the
domains $D_m$ for each fixed $k$ and all sufficiently large $m.$
Indeed, prime ends of $D_0$ may be conditionally divided into~6
groups: 0) the set of prime ends $P_0$ which correspond to $z_0\in
{\Bbb S}^1\setminus I,$ where ${\Bbb S}^1=\partial {\Bbb D};$ 1) and
2): the prime ends $P_1$ and $P_2$ with impression $z_0=1,$ the
sequences of domains $d_k$ lie from below and from upper of $I,$
correspondingly; 3) and 4): the set of prime ends $P_3$ and $P_4$
with impression at some $z_0\in I\setminus \{1,0\},$ the sequences
of domains $d_k$ lie from below and from upper of $I,$
correspondingly; 5) the prime end $P_5$ which correspond to its
impression $I(P_5)=\{0\}.$ Let $d_k,$ $k=1,2,\ldots,$ be a sequence
of domains in $P_i,$ $i=\overline{0, 5}.$ Now, $d_k$ is convex for
any $k=1,2,\ldots. $ Given $k\in {\Bbb N},$ the intersection
$D_m\cap d_k$ is non-empty and convex for sufficiently large $m.$ In
particular, $D_m\cap d_k$ is connected for above $m,$ as required.
Thus, 1) the sequence of domains $D_m$ is regular with respect
to~$D_0$ in terms of prime ends. Observe that, 2) for every $m\in
{\Bbb N},$ a domain $D_m$ is quasiconformally regular; $D_0$ is also
quasiconformally regular. Indeed, $D_0$ and $D_m$ are regular
because by the Riemannian mapping theorem, $D_0$ (or $D_m$) is
conformally equivalent to ${\Bbb D}$; moreover, ${\Bbb D}$ has
locally quasiconformal boundary (see e.g. \cite[Theorem~17.10]{Va}).
Note that, 3) the family $f_m(D_m)={\Bbb D}$ is equi-uniform with
respect to $2$-modulus over all $m\in {\Bbb N}$ (see
Remark~\ref{rem1}). Finally, 4) $Q\equiv 1$ has a finite mean
oscillation in $\overline{D_0};$ moreover, (\ref{eq2_4}) holds
for every $x_0\in \overline{D_0}$ and any $\beta(x_0)>0.$ Thus, all
of conditions of Theorem~\ref{th4} hold, as required.
\end{example}

\medskip
\begin{example}\label{ex5}
It is possible to construct corresponding families of mappings,
satisfying Theorem~\ref{th4} that have unbounded characteristics.
Let $z_0\in D_0$ and $0<r_0<d(z_0, \partial D_0),$ where $D_0$ is a
domain from Example~\ref{ex2}. Put
$h(x)=\frac{x}{|x|\log\frac{(r_0e)}{|x|}},$ $x\in B(z_0, r_0),$
$h(z_0)=z_0,$ $h|_{S(z_0, r_0)}=x.$ Then $h$ is defined in the ball
$B(z_0, r_0)$ and $h(B(z_0, r_0))=B(z_0, r_0).$ Reasoning similarly
to \cite[Proposition~6.3]{MRSY}, it may be shown that $h$ satisfies
the relations~(\ref{eq2*!})--(\ref{eq8B_2}) at any point $x_0\in
\overline{B(z_0, r_0)}$ for
$Q=Q(x)=\log\left(\frac{r_0e}{|x|}\right).$ Note that $Q$ satisfies
(\ref{eq2_4}) for every $x_0\in \overline{D_0}$ and any
$\beta(x_0)>0$ that may be verified directly. Now, we put
$$g_m(x)=\begin{cases}(f_m \circ h)(x)\,,& x\in B(z_0, r_0)\,,\\
f_m(x)\,,& x\not\in B(z_0, r_0)\end{cases}\,,$$
%
%
where $f_m$ and $f$ are from Example~\ref{ex2}.
By the construction, $g_m$ satisfy the
relations~(\ref{eq2*!})--(\ref{eq8B_2}) at any point $x_0\in
\overline{D_0}$ for $Q= Q(y)=\log\left(\frac{r_0e}{|x|}\right).$
Observe that, the domains $f_m(D_m)$ are not changed, so that the
domains $g_m(D_m)$ are regular, as well. In addition, since $h$ is a
fixed mapping, the relations $h(g_m(A))\geqslant\delta,$
$m=1,2,\ldots ,$ also hold for some $\delta>0$ and all $m=1,2,\ldots
.$ The mappings $g_m,$ $m=1,2,\ldots ,$ are homeomorphisms and
satisfy all the conditions of Theorem~\ref{th4}.
\end{example}

\medskip\medskip\medskip
{\bf Declarations.}

\medskip
{\bf Conflict of interest.} The authors have no financial or
proprietary interests in any material discussed in this article.

\medskip
{\bf Funding.} The work was supported by the National Research
Foundation of Ukraine, Project number 2025.07/0014, Project name:
``Modern problems of Mathematical Analysis and Geometric Function
Theory''.

\medskip
{\bf Data availability statement.} All necessary data are included
in the paper.

\medskip
{\bf Contribution.} The authors have equal contribution to writing
the article.

\medskip
{\bf \noindent Nataliya Ilkevych} \\
Zhytomyr Ivan Franko State University,  \\
40 Velyka Berdychivs'ka Str., 10 008  Zhytomyr, UKRAINE \\
Email: ilkevych1980@gmail.com

\medskip
{\bf \noindent Denys Romash} \\
Zhytomyr Ivan Franko State University,  \\
40 Velyka Berdychivs'ka Str., 10 008  Zhytomyr, UKRAINE \\
dromash@num8erz.eu

\medskip
{\bf \noindent Evgeny Sevost'yanov} \\
{\bf 1.} Zhytomyr Ivan Franko State University,  \\
40 Velyka Berdychivs'ka Str., 10 008  Zhytomyr, UKRAINE \\
{\bf 2.} Institute of Applied Mathematics and Mechanics\\
of NAS of Ukraine, \\
19 Henerala Batyuka Str., 84 116 Slov'yansk,  UKRAINE\\
esevostyanov2009@gmail.com

\end{document}